\theoremstyle{plain}
\newtheorem{thm}{Theorem}[section]
\newtheorem{theorem}[thm]{Theorem}
\newtheorem{corollary}[thm]{Corollary}
\newtheorem{lemma}[thm]{Lemma}
\newtheorem{problem}[thm]{Problem}
\newtheorem{prop}[thm]{Proposition}
\newtheorem{proposition}[thm]{Proposition}
\newtheorem{fact}[thm]{Fact}
\theoremstyle{definition}
\newtheorem{de}[thm]{Definition}
\newtheorem{rem}[thm]{Remark}
\newtheorem{remark}[thm]{Remark}
\newtheorem{example}[thm]{Example}
\newtheorem{algorithm}[thm]{Algorithm}
\newcommand{\Z}{\mathbb{Z}}
\newcommand{\Soc}{\mathop{\mathrm{Soc}}}
\newcommand{\id}{\mathrm{id}}
\newcommand{\aut}[1]{\mathrm{Aut}(#1)}
\newcommand{\Ker}{\mathop{\mathrm{Ker}}}
\newcommand{\Mlt}{\mathop{\mathcal{G}}}
\newcommand{\Aut}{\mathop{\mathrm{Aut}}}
\newcommand{\LMlt}{\mathop{\mathcal{G}_{\ell}}}
\newcommand{\RMlt}{\mathop{\mathcal{G}_{r}}}
\numberwithin{equation}{section}
\begin{document}

\title{Skew left braces and 2-reductive 
solutions
\\of the Yang-Baxter equation}

\author{P\v remysl Jedli\v cka}
\author{Agata Pilitowska}

\address{(P.J.) Department of Mathematics, Faculty of Engineering, Czech University of Life Sciences, Kam\'yck\'a 129, 16521 Praha 6, Czech Republic}
\address{(A.P.) Faculty of Mathematics and Information Science, Warsaw University of Technology, Koszykowa 75, 00-662 Warsaw, Poland}

\email{(P.J.) jedlickap@tf.czu.cz}
\email{(A.P.) agata.pilitowska@pw.edu.pl}

\keywords{
Yang-Baxter equation, set-theoretic solution, multipermutation solution, nilpotency, 
skew left brace, left distributivity, bi-skew left brace.
}

\subjclass[2010]{Primary: 16T25. 
Secondary: 20B25n.
}

\date{\today}

\begin{abstract}
We study 2-reductive non-involutive non-degenerate set-theoretic solutions of the Yang-Baxter equation. We give a combinatorial
construction of any such solution of any (even infinite) size. We also prove that
solutions associated to a skew left brace are 2-reductive if and only if the skew left brace is nilpotent of class~$2$.
Moreover, all such skew left braces are
actually bi-skew left braces. We focus on these structures and we give several equivalent properties characterizing solutions associated to bi-skew left braces.

\end{abstract}

\maketitle

\section{Introduction}
The Yang-Baxter equation is a fundamental equation occurring in mathematical physics. It appears, for example, in integrable models in statistical mechanics, quantum field theory or Hopf algebras~(see e.g. \cite{Jimbo, K}). Searching for its solutions has been absorbing researchers for many years.

Let us recall that, for a vector space $V$, a {\em solution of the Yang--Baxter equation} is a linear mapping $r:V\otimes V\to V\otimes V$ 
 such that
\begin{align*}
(id\otimes r) (r\otimes id) (id\otimes r)=(r\otimes id) (id\otimes r) (r\otimes id).
\end{align*}

Description of all possible solutions seems to be extremely difficult and therefore
there were some simplifications introduced by Drinfeld in \cite{Dr90}.
Let  $X$ be a basis of the space $V$ and let $\sigma:X^2\to X$ and $\tau: X^2\to X$ be two mappings. We say that $(X,\sigma,\tau)$ is a {\em set-theoretic solution of the Yang--Baxter equation} if
the mapping 
$$x\otimes y \mapsto \sigma(x,y)\otimes \tau(x,y)$$ extends to a solution of the Yang--Baxter
equation. It means that $r\colon X^2\to X^2$, where $r=(\sigma,\tau)$,  is a bijection and
satisfies the \emph{braid relation}:
\begin{equation}\label{eq:braid}
(id\times r)(r\times id)(id\times r)=(r\times id)(id\times r)(r\times id).
\end{equation}

A solution $(X,r)=(X,\sigma,\tau)$ is called {\em non-degenerate} if the mappings $\sigma_x=\sigma(x,\_)$ and $\tau_y=\tau(\_\,,y)$ are bijections,
for all $x,y\in X$.
A solution 
is {\em involutive} if $r^2=\mathrm{id}_{X^2}$, i.e. for each $x,y\in X$, $\tau_y(x)=\sigma_{\sigma_x(y)}^{-1}(x)$ and $\sigma _x(y)=\tau^{-1}_{\tau_y(x)}(y)$. Moreover, it is
\emph{square free} if $r(x,x)=(x,x)$, for every $x\in X$.

All solutions we study in this paper are set-theoretic and non-degenerate and we will call them simply \emph{solutions}.
Although researchers usually focus on finite solutions only, in our paper
the set $X$ can be of arbitrary cardinality. 

In the last decade, the main interest of researchers lied in the study
of involutive solutions, mainly due to the fact that every involutive solution embeds
into a solution associated to a ring-like structure called {\em left brace} \cite{Rump07A, CJO14}.
A special emphasis was taken onto so-called {\em multipermutation solution}
since the property resembles nilpotency and it is connected to the nilpotency of left braces (see e.g. \cite{GIC12, S18}).

In~2017 Guarnieri and Vendramin~\cite{GV} generalized the concept of left braces
introducing the skew left braces (Definition~\ref{def:skew_left_brace}). These structures yield non-involutive solutions and we can also
define {\em multipermutation} solutions (Definition~\ref{ret}).
Due to Bachiller~\cite{B18}, we have,
analogously as in the involutive case, that the multipermutation property
of solutions
corresponds to the nilpotency property
of skew left braces (Corollary~\ref{cor:RetSoc}).
The path is therefore cleared for
researchers to study non-involutive multipermutation solutions as 
Vendramin formulated in \cite[Problem 23]{V19}.
Of course, such a study is more complicated in the non-involutive case since we now need to consider two almost independent mappings
$\sigma$ and $\tau$, whereas
in the involutive case one of the mappings was defined uniquely from the other one.

The least complicated involutive solutions are so called $2$-{\it reductive} ones. The authors together with Zamojska-Dzienio showed
in~\cite{JPZ20a} that the nature
of such solutions
is a combinatorial one rather than algebraic and
a~combinatorial construction of any such solution was presented there.
The notion of $2$-reductivity can be
naturally generalized for non-involutive solutions too, only instead of one identity we need four identities (Definition~\ref{def:2red}). It then turns out that the
combinatorial construction from~\cite{JPZ20a} translates straightforwardly for non-involutive solutions. 
We also 
prove, analogously as in the involutive case, that a skew left brace
yields a $2$-reductive solution if and only if it is nilpotent of class~$2$ (Theorem \ref{thm:2-red_brace}).

The $2$-reductive involutive solutions
appeared in a different context as well. Stefanello and Trappeniers studied in~\cite{ST} so called
{\em bi-skew left braces} (Definition~\ref{def:SLB}) and their connections with solutions. They
proved that an involutive solution
associated to a bi-skew left brace
is always $2$-reductive. In the
non-involutive case, we find in
Theorem~\ref{thm:dislbilred}
that the solution
associated to a bi-skew left brace
is left distributive, a property
introduced (in the context of solutions) in~\cite{JPZ20}. On the other hand, Bardakov, Neshchadim and Yadav investigated in \cite{BNY} skew left braces $(B,\cdot,\circ)$ for which the mapping $\lambda$ is a homomorphism of groups $(B,\cdot)$ and $\aut{B,\cdot}$ ($\lambda$-\emph{homomorphic skew left braces}). In Proposition  \ref{prop:4ekv2red} we showed that each skew left brace with $2$-reductive associated solution is $\lambda$-homomorphic.

The paper is organized as follows: in Section~\ref{sec:prelim} we recall basic definitions and properties of solutions. In~Section~\ref{sec:2red} we introduce the definition of $2$-reductivity and we present the combinatorial construction of $2$-reductive solutions (Theorem \ref{th:2red}).
In~Section~\ref{sec:skew} we recall the
definition of a skew left brace, of a bi-skew left brace and of their associated solutions.
We also recall many properties of these structures.
In Section~\ref{sec:distr} we characterize
the solutions associated to bi-skew left braces using several equivalent properties (Theorem~\ref{thm:dislbilred}) 
and we show the impact of these properties
on bi-skew left braces.
Finally, in Section~\ref{sec:last} we  
describe the skew left braces that yield $2$-reductive solutions; more precisely we give an equivalent characterization for each of the four identities defining the $2$-reductivity separately (Proposition \ref{prop:4ekv2red}) as well as a characterization for all the four properties combined (Theorem \ref{thm:2-red_brace}).

\section{Preliminaries}\label{sec:prelim}

Since $r\colon X^2\to X^2$ is a bijective map there is $r^{-1}\colon X^2\to X^2$. It is also true that $(X,r^{-1})$ is a solution. Let for $x\in X$, $\hat{\sigma}_x,\hat{\tau}_x\colon X\to X$ be such that $r^{-1}(x,y)=(\hat{\sigma}_x(y),\hat{\tau}_y(x))$. Clearly, we have that for $x,y\in X$:
\begin{align*}
&(x,y)=rr^{-1}(x,y)=r(\hat{\sigma}_x(y),\hat{\tau}_y(x))=(\sigma_{\hat{\sigma}_x(y)}\hat{\tau}_y(x),\tau_{\hat{\tau}_y(x)}\hat{\sigma}_x(y)), \quad {\rm and}\\
&(x,y)=r^{-1}r(x,y)=r^{-1}(\sigma_x(y),\tau_y(x))=(\hat{\sigma}_{\sigma_x(y)}\tau_y(x),\hat{\tau}_{\tau_y(x)}\sigma_x(y)).
\end{align*}
Hence 
\begin{align}
&\sigma_{\hat{\sigma}_x(y)}\hat{\tau}_y(x)=x\quad \Rightarrow\quad \hat{\tau}_y(x)=\sigma^{-1}_{\hat{\sigma}_x(y)}(x)\quad \Rightarrow\quad \sigma^{-1}_y(x)=\hat{\tau}_{\hat{\sigma}^{-1}_x(y)}(x)\label{rr:1}\\
&\tau_{\hat{\tau}_y(x)}\hat{\sigma}_x(y)=y\quad \Rightarrow\quad \hat{\sigma}_x(y)=\tau^{-1}_{\hat{\tau}_y(x)}(y)\quad \Rightarrow\quad \tau^{-1}_x(y)=\hat{\sigma}_{\hat{\tau}^{-1}_y(x)}(y)\label{rr:2}\\
&\hat{\sigma}_{\sigma_x(y)}\tau_y(x)=x\quad \Rightarrow\quad \tau_y(x)=\hat{\sigma}^{-1}_{\sigma_x(y)}(x)\quad \Rightarrow\quad \hat{\sigma}^{-1}_y(x)=\tau_{\sigma^{-1}_x(y)}(x)\label{rr:3}\\
&\hat{\tau}_{\tau_y(x)}\sigma_x(y)=y\quad \Rightarrow\quad \sigma_x(y)=\hat{\tau}^{-1}_{\tau_y(x)}(y)\quad \Rightarrow\quad \hat{\tau}^{-1}_x(y)=\sigma_{\tau^{-1}_y(x)}(y).\label{rr:4}
\end{align}
By definition, in an involutive solution we always have $\hat{\sigma}_x=\sigma_x$ and $\hat{\tau}_x=\tau_x$.

A solution $(X,\sigma,\tau)$ is called \emph{permutational}, if for every $x,y\in X$, $\sigma_x=\sigma_y$  and $\tau_x=\tau_y$. It is a \emph{projection} (or \emph{trivial}) solution if for every $x\in X$, $\sigma_x=\tau_x=\id$.

We say that a solution $(X,\sigma,\tau)$ satisfies  Condition ({\bf lri}) if, for each $x\in X$, the permutations $\sigma_x$ and $\tau_x$ are mutually inverse, i.e. 
\begin{align}
\tag{{\bf lri}}
\forall_{x\in X}\quad \sigma_x=\tau_x^{-1}.
\end{align} 

If $(X, \sigma,\tau)$ is a solution then directly by braid relation we obtain for $x,y,z\in X$:
\begin{align}
&\sigma_x\sigma_y=\sigma_{\sigma_x(y)}\sigma_{\tau_y(x)} \label{birack:1}\\
&\tau_{\sigma_{\tau_y(x)}(z)}\sigma_x(y)=\sigma_{\tau_{\sigma_y(z)}(x)}\tau_{z}(y) \label{birack:2}\\
&\tau_x\tau_y=\tau_{\tau_x(y)}\tau_{\sigma_y(x)} \label{birack:3}
\end{align}

To describe involutive solutions $(X,\sigma,\tau)$, the groups of the form $\langle \sigma_x:\ x\in X\rangle$ were investigated by many authors (see e.g. \cite{CJR}) and called \emph{IYB groups}. In the non-involutive cases various types of such kind of groups were introduced -- see e.g. \cite{B18, JPZ20, CJKAV}.
 In the paper we will focus on three of them.
The \emph{left permutation group} of a solution $(X,\sigma,\tau)$ is the permutation group generated by the permutations from the left-hand coordinate of~$r$, i.e. the group $\LMlt(X)=\langle \sigma_x:\ x\in X\rangle$. Similarly, one defines the \emph{right permutation group} of $(X,\sigma,\tau)$ as the permutation group $\RMlt(X)=\langle \tau_x:\ x\in X\rangle$. The permutation group $\Mlt(X)=\langle\sigma_x,\tau_y:x,y\in X\rangle$ generated by all translations $\sigma_x$ and $\tau_x$ is called the \emph{permutation group} of a solution.

Etingof, Schedler and Soloviev (\cite{ESS}) introduced, for each solution $(X,\sigma,\tau)$, its \emph{structure group} $G(X,r):=\langle X\mid x\circ y=\sigma_x(y)\circ\tau_y(x)\; \forall x,y\in X\rangle$. A solution is called \emph{injective} if the canonical map $X\to G(X,r);\; x\mapsto x$ is injective. Involutive solutions are always injective.

Let us recall that a bijection $\Phi\colon X\to X'$ is an \emph{isomorphism} of two solutions $(X,\sigma,\tau)$ and $(X',\sigma',\tau')$ if, for each $x\in X$,
\[
\Phi\sigma_x=\sigma'_{\Phi(x)}\Phi\quad {\rm and}\quad \Phi\tau_x=\tau'_{\Phi(x)}\Phi.
\]

\subsection*{Congruences}

Let $(X,\sigma,\tau)$ be a solution. An equivalence relation $\mathord{\asymp}\subseteq X\times X$ such that, for $x_1,x_2,y_1,y_2\in X$, 
\begin{align}\label{congr}
&x_1\asymp x_2\;\; {\rm and} \;\; y_1\asymp y_2\quad \Rightarrow\quad \sigma^{\varepsilon}_{x_1}(y_1)\asymp \sigma^{\varepsilon}_{x_2}(y_2)\quad {\rm and}\quad \tau^{\varepsilon}_{x_1}(y_1)\asymp \tau^{\varepsilon}_{x_2}(y_2),
\end{align}
where $\varepsilon\in \{-1,1\}$, is called a \emph{congruence} of the solution $(X,\sigma,\tau)$.
A congruence induces a quotient solution on its classes.

In \cite{ESS} Etingof, Schedler and Soloviev introduced, for each involutive solution $(X,\sigma,\tau)$, the equivalence relation $\sim$ on the set $X$: for each $x,y\in X$
\begin{align}\label{rel:sim}
x\sim y\quad \Leftrightarrow\quad \sigma_x=\sigma_y
\end{align}
and they showed that 
$\sim$ is a congruence of the solution.
In the case of non-involutive solution $(X,\sigma,\tau)$,
the equivalence relation $\sim$
need not to 
be a congruence.
But it is so if the solution is left distributive (see \cite[Theorem 3.4]{JPZ20}). The quotient solution $(X^{\sim},\sigma,\tau)$ is then called the {\em left retract} of~$X$ and denoted by $\mathrm{LRet}(X,\sigma,\tau)$.

Analogously to \eqref{rel:sim}, we can define the symmetrical relation
\begin{equation}
x\backsim y \quad \Leftrightarrow\quad \tau_x=\tau_y
\end{equation}
and this relation induces a solution on the quotient set $X^{\backsim}$ of every right distributive solution. The quotient solution $(X^{\backsim},\sigma,\tau)$ is called the {\em right retract} of~$(X,\sigma,\tau)$ and denoted by $\mathrm{RRet}(X,\sigma,\tau)$.
If a 
solution is involutive then $x\sim y$ if and only if $x\backsim y$ \cite[Proposition 2.2]{ESS}.

The intersection of the two relations here defined is
the relation
\begin{equation}
 x\approx y \quad \Leftrightarrow \quad {x\sim y} \wedge x\backsim y
 \quad \Leftrightarrow \quad {\sigma_x=\sigma_y} \wedge {\tau_x=\tau_y}.
\end{equation}

Lebed and Vendramin showed in \cite{LV} that  the relation $\approx$ is a congruence of injective solutions. 
In \cite{JPZ19} the authors together with Zamojska-Dzienio proved that the relation $\mathrel{\approx}$ induces a solution on the quotient set $X^{\mathrel{\approx}}$  for any solution $(X,\sigma,\tau)$.  A substantially shorter proof has recently appeared in \cite{CJKAV}.
\begin{de}\label{ret}
Let $(X,\sigma,\tau)$ be a solution. The quotient solution $\mathrm{Ret}(X,\sigma,\tau):=(X^{\approx},\sigma,\tau)$  with $\sigma_{x^{\approx}}(y^{\approx})=\sigma_x(y)^{\approx}$ and $\tau_{y^{\approx}}(x^{\approx})=\tau_y(x)^{\approx}$, for $x^{\approx},y^{\approx}\in X^{\approx}$  and $x\in x^{\approx},\; y\in y^{\approx}$, is called the \emph{retraction} solution of $(X,\sigma,\tau)$.
We say that a solution $(X,\sigma,\tau)$ is \emph{irretractable} if ${\rm Ret}(X,\sigma,\tau)=(X,\sigma,\tau)$, i.e.
$\approx$ is the trivial relation.
On the other hand, if there exists an integer~$k$ such that $\mathrm{Ret}^k(X,\sigma,\tau)$ has one element only then we say that
$(X,\sigma,\tau)$ has {\em multipermutation level $k$}.
\end{de}

\section{$2$-reductive solutions}\label{sec:2red}

In this section we reprove the results of \cite{JPZ20a} concerning 2-reductive solutions.
A question is how to naturally generalize
the notion for non-involutive solutions;
a fundamental property of involutive $2$-reductive solutions is that the orbits of the permutation
group lie within the classes of~$\sim$.
Since, for non-involutive solutions, the congruence
$\sim$ is generalized by~$\approx$, it is natural to
assume that the orbits of the permutation group
lie within the same classes of~$\approx$; this is encoded by Identities \eqref{eq:red1}--\eqref{eq:red4}.
It then turns out that the generic construction of involutive 2-reductive solutions straightforwardly generalizes for non-involutive ones.

\begin{de}\label{def:2red}
A solution $(X,\sigma, \tau)$ is called $2$-\emph{reductive} if, for every $x,y\in X$:
\begin{align}
& \sigma_{\sigma_x(y)}=\sigma_y, \label{eq:red1}\\
& \tau_{\tau_x(y)}=\tau_y, \label{eq:red2}\\
& \sigma_{\tau_x(y)}=\sigma_y, \label{eq:red3}\\
& \tau_{\sigma_x(y)}=\tau_y. \label{eq:red4}
\end{align}
\end{de}

\begin{example}
Let $(A,+)$ be an abelian group and $0\neq \alpha\in \mathrm{End}(A,+)$ be such that $\alpha^2=0$. By \cite[Lemma 8]{Sol}, $(A,\sigma,\tau)$ with $\sigma_x(y)=\alpha(x)+y$ and $\tau_y(x)=x+\alpha(y)$, for $x,y\in A$, is a solution. 
A~straightforward calculation shows that $(A,\sigma,\tau)$ is $2$-reductive.
By \cite[Theorem 3.1]{Sol} it is injective if and only if $2\alpha=0$. In this case it is involutive.
\end{example}

For $2$-reductive solution we also have:
\begin{align}\label{eq:more2red}
& \sigma_{\sigma^{-1}_x(y)}=\sigma_y,\quad\tau_{\tau^{-1}_x(y)}=\tau_y , \quad \tau_{\sigma^{-1}_x(y)}=\tau_y\quad{\rm and}\quad \sigma_{\tau^{-1}_x(y)}=\sigma_y.
\end{align}

In the case of involutive solutions~\cite{JPZ20a} the property of 2-reductivity was 
defined by \eqref{eq:red1} only. It is nevertheless easy to prove that all the properties
\eqref{eq:red1}--\eqref{eq:more2red} are equivalent for involutive solutions.

For an involutive solution $(X,\sigma,\tau)$,  Gateva-Ivanova considered in \cite[Definition 4.3]{GI18} a condition saying
\begin{align*}
\tag{$\ast$} \forall x\in X\quad \exists y\in X\quad  \sigma_y(x)=x.
\end{align*}
It is evident, that each square free solution satisfies Condition $(\ast)$. On the other hand solutions without fixed points are examples of ones which do not satisfy this condition.

\begin{fact}\cite[Proposition 8.2]{GIC12}, \cite[Proposition 4.7]{GI18}
If an involutive solution satisfies Condition~$(\ast)$ then it is a multipermutation solution of level $2$ if and only if it is $2$-reductive.
\end{fact}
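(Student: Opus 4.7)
The plan exploits two recollections from the excerpt: for involutive solutions $\approx$ coincides with $\sim$, and all four identities \eqref{eq:red1}--\eqref{eq:red4} defining $2$-reductivity are mutually equivalent, so it suffices to establish \eqref{eq:red1}.

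For the implication $2$-reductive $\Rightarrow$ multipermutation level~$2$, Condition $(\ast)$ plays no role. From \eqref{eq:red1} and \eqref{eq:red4} one reads $\sigma_x(y) \approx y$, and symmetrically \eqref{eq:red2} and \eqref{eq:red3} give $\tau_x(y) \approx y$ for every $x,y \in X$. Hence on $X/{\approx}$ every induced permutation is the identity; that is, $\mathrm{Ret}(X,\sigma,\tau)$ is a projection solution, and any projection solution collapses to a singleton after one additional retraction. So $\mathrm{Ret}^2(X,\sigma,\tau)$ is a single point.

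For the converse, the hypothesis $\mathrm{Ret}^2(X,\sigma,\tau) = \mathrm{Ret}(X/{\sim})$ being a singleton means that in the already-retracted solution $X/{\sim}$ every two elements are $\sim$-equivalent, i.e.\ the permutations $\sigma_{x^{\sim}}$ do not depend on $x^\sim$. Translating back to $X$, this becomes the uniformity statement
\[
\sigma_{x_1}(y) \sim \sigma_{x_2}(y) \quad \text{for all } x_1, x_2, y \in X.
\]
Here lies the main obstacle: a~priori this only says that the set $\{\sigma_x(y) : x \in X\}$ is contained in a single $\sim$-class, with no control over which class. Condition $(\ast)$ is precisely the device that pins this class to $y$. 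Given $y \in X$, choose $z \in X$ with $\sigma_z(y) = y$ and apply the uniformity with $x_1 = x$ arbitrary, $x_2 = z$:
\[
\sigma_x(y) \;\sim\; \sigma_z(y) \;=\; y,
\]
which unpacks to $\sigma_{\sigma_x(y)} = \sigma_y$, i.e.\ identity~\eqref{eq:red1}. Invoking the equivalence of \eqref{eq:red1}--\eqref{eq:red4} in the involutive case completes the proof of $2$-reductivity.
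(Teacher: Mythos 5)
Your proof is correct and follows essentially the same route the paper takes for its non-involutive generalization (Proposition~\ref{prop:star}): multipermutation level at most~$2$ is translated into the uniformity $\sigma_{x_1}(y)\approx\sigma_{x_2}(y)$ (which is $\sim$ in the involutive case), and Condition~$(\ast)$ is used to choose $x_2$ with $\sigma_{x_2}(y)=y$, pinning the class to $y$ and yielding \eqref{eq:red1}, while the converse direction needs no $(\ast)$ at all. The only additional ingredient you invoke, the equivalence of \eqref{eq:red1}--\eqref{eq:red4} for involutive solutions, is exactly what the paper also records, so the arguments match.
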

For non-involutive solutions we have a similar result.

\begin{proposition}\label{prop:star}
  Let $(X,\sigma,\tau)$ be a solution satisfying the following two properties:
  \begin{align}
    \forall x\in X \ \exists y\in X\quad \sigma_y(x)&=x,\label{prop:star:1}\\
    \forall x\in X \ \exists y\in X\quad \tau_y(x)&=x.\label{prop:star:2}
  \end{align}
  Then $(X,\sigma,\tau)$ is a multipermutation solution of level at most~$2$ if and only if it is $2$-reductive.
\end{proposition}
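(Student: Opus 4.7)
My plan is to analyse both implications through the first retract $\mathrm{Ret}(X,\sigma,\tau)=X/{\approx}$, invoking the two fixed-point conditions \eqref{prop:star:1}--\eqref{prop:star:2} only at the very end of the harder direction in order to convert ``for some $x'$'' into genuine identities on $X$.

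For the direction ``$2$-reductive $\Rightarrow$ multipermutation level at most~$2$'' I would show that the retract is a projection solution. Indeed, \eqref{eq:red1} and \eqref{eq:red4} give $\sigma_{\sigma_x(y)}=\sigma_y$ and $\tau_{\sigma_x(y)}=\tau_y$, so $\sigma_x(y)\approx y$, and dually \eqref{eq:red3} together with \eqref{eq:red2} yield $\tau_x(y)\approx y$. Hence every $\sigma$ and every $\tau$ acts as the identity on $X/{\approx}$, so $\mathrm{Ret}(X,\sigma,\tau)$ is a projection solution; one further retraction then collapses it to a single element, giving multipermutation level at most~$2$. This direction uses neither star hypothesis.

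For the converse, I would assume multipermutation level at most~$2$ together with \eqref{prop:star:1}--\eqref{prop:star:2}. Since $\mathrm{Ret}^2(X,\sigma,\tau)$ is a singleton, all elements of $\mathrm{Ret}(X,\sigma,\tau)$ are $\approx$-related inside that retract; unfolding the definition of $\approx$ on the quotient solution, this translates into the concrete statement that, for every $x,x',y\in X$, $\sigma_x(y)\approx\sigma_{x'}(y)$ and $\tau_x(y)\approx\tau_{x'}(y)$. I would then fix $y\in X$ and use \eqref{prop:star:1} to pick $x'$ with $\sigma_{x'}(y)=y$, so that $\sigma_x(y)\approx y$ for every $x$; this is precisely the conjunction of \eqref{eq:red1} and \eqref{eq:red4}. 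Symmetrically, applying \eqref{prop:star:2} to choose $x'$ with $\tau_{x'}(y)=y$ forces $\tau_x(y)\approx y$ for every $x$, which is \eqref{eq:red3} combined with \eqref{eq:red2}.

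The only step that requires a little care is the translation of ``$\mathrm{Ret}^2(X)$ has one element'' into the pointwise $\approx$-relations stated above, but this is just the definition of the retract applied twice; once this dictionary is set up, each star condition is invoked in a single line, so I foresee no real obstacle.
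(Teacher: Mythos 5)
Your proposal is correct and is essentially the paper's own argument: both rest on the translation that multipermutation level at most~$2$ means $\sigma_x(z)\approx\sigma_{x'}(z)$ and $\tau_x(z)\approx\tau_{x'}(z)$ for all $x,x',z$, and both use \eqref{prop:star:1}--\eqref{prop:star:2} only to specialize $x'$ to a fixed point, turning these relations into the four identities \eqref{eq:red1}--\eqref{eq:red4}. Your forward direction, phrased as ``the retract is a projection solution,'' is just a restatement of the same unfolding of $\approx$, so no substantive difference remains.
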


\begin{proof}
  A solution is of multipermutation level~$1$ if and only if $\sigma_x(z)=\sigma_y(z)$ and $\tau_x(z)=\tau_y(z)$, for all $x,y,z\in X$.
  Hence a solution is of multipermutation level at most~$2$ if and only if $\sigma_x(z)\approx \sigma_y(z)$ and $\tau_x(z)\approx \tau_y(z)$, for all $x,y,z\in X$.
  This can be rewritten into four identities:
  \begin{align*}
    \sigma_{\sigma_x(z)}&=\sigma_{\sigma_y(z)}, & \sigma_{\tau_x(z)}&=\sigma_{\tau_y(z)},\\
    \tau_{\sigma_x(z)}&=\tau_{\sigma_y(z)}, & \tau_{\tau_x(z)}&=\tau_{\tau_y(z)}.
  \end{align*}
It is therefore clear that a $2$-reductive solution is always of multipermutation level~$2$. On the other hand, if a solution is of multipermutation level~$2$ then, in each of the identities, we can pick a suitable $y$ such that $\sigma_y(z)=z$ or $\tau_y(z)=z$, respectively which transforms the identities into $2$-reductivity.
\end{proof}

\begin{prop}\label{th:disabel}
Let $(X,\sigma,\tau)$ be a solution. Then
\begin{enumerate}
\item [(i)] if the solution satisfies \eqref{eq:red1} and \eqref{eq:red3} then the left permutation group $\LMlt(X)$ is abelian;
\item [(ii)] if the solution satisfies \eqref{eq:red2} and \eqref{eq:red4} then the right permutation group $\RMlt(X)$ is abelian;
\item [(iii)] if the solution is~$2$-reductive
then the permutation group $\Mlt(X)$ is abelian.
\end{enumerate}
\end{prop}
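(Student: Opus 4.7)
The plan is to extract commutation relations directly from the three braid relations \eqref{birack:1}--\eqref{birack:3}, using the $2$-reductivity identities to rewrite the subscripts. Each of the three parts then falls out of one braid relation.

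For part~(i), I would start from \eqref{birack:1}:
$\sigma_x\sigma_y=\sigma_{\sigma_x(y)}\sigma_{\tau_y(x)}$. Applying \eqref{eq:red1} to the first factor gives $\sigma_{\sigma_x(y)}=\sigma_y$, and applying \eqref{eq:red3} to the second factor gives $\sigma_{\tau_y(x)}=\sigma_x$. Hence $\sigma_x\sigma_y=\sigma_y\sigma_x$ for all $x,y\in X$, so the generators of $\LMlt(X)$ commute pairwise. Part~(ii) is perfectly symmetric: start from \eqref{birack:3}, rewrite the subscripts using \eqref{eq:red2} and \eqref{eq:red4}, and conclude that $\tau_x$ and $\tau_y$ commute.

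For part~(iii), the remaining task, after (i) and (ii), is to show $\sigma_x\tau_z=\tau_z\sigma_x$ for all $x,z\in X$. I would apply \eqref{birack:2}, which reads
\[
\tau_{\sigma_{\tau_y(x)}(z)}\bigl(\sigma_x(y)\bigr)=\sigma_{\tau_{\sigma_y(z)}(x)}\bigl(\tau_z(y)\bigr).
\]
On the left, \eqref{eq:red3} simplifies $\sigma_{\tau_y(x)}$ to $\sigma_x$, so the inner expression becomes $\sigma_x(z)$; then \eqref{eq:red4} turns $\tau_{\sigma_x(z)}$ into $\tau_z$, giving $\tau_z\bigl(\sigma_x(y)\bigr)$. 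On the right, \eqref{eq:red4} reduces $\tau_{\sigma_y(z)}$ to $\tau_z$, and then \eqref{eq:red3} reduces $\sigma_{\tau_z(x)}$ to $\sigma_x$, giving $\sigma_x\bigl(\tau_z(y)\bigr)$. Equating, $\tau_z\sigma_x(y)=\sigma_x\tau_z(y)$ for every $y$, whence $\tau_z\sigma_x=\sigma_x\tau_z$. Combined with (i) and (ii), every pair of generators of $\Mlt(X)$ commutes, so $\Mlt(X)$ is abelian.

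There is no real obstacle here; the whole argument is a careful bookkeeping of which $2$-reductivity identity rewrites which subscript. The only mildly delicate step is parsing \eqref{birack:2} correctly in~(iii), where both sides are elements of~$X$ rather than permutations, and then applying the two nested simplifications in the right order so that \eqref{eq:red3} and \eqref{eq:red4} can be used alternately. Once the subscripts collapse, the commutativity of $\sigma_x$ with $\tau_z$ is immediate.
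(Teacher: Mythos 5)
Your proposal is correct and follows essentially the same route as the paper: parts (i) and (ii) are literally the paper's computations from \eqref{birack:1} and \eqref{birack:3}, and part (iii) uses \eqref{birack:2} with the same rewritings by \eqref{eq:red3} and \eqref{eq:red4}, merely simplifying both sides of the braid identity directly instead of the paper's chain starting from $\tau_z\sigma_x(y)$. No gaps.
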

\begin{proof}
Let $(X,\sigma,\tau)$ be a solution.

\begin{align*}
\text{(i) }&\sigma_x\sigma_y\stackrel{\eqref{birack:1}}=\sigma_{\sigma_x(y)}\sigma_{\tau_y(x)} \stackrel{\eqref{eq:red1},\eqref{eq:red3}}=\sigma_y\sigma_x;\\
\text{(ii) }&\tau_x\tau_y\stackrel{\eqref{birack:3}}=\tau_{\tau_x(y)}\tau_{\sigma_y(x)} \stackrel{\eqref{eq:red2},\eqref{eq:red4}}=\tau_y\tau_x; \\
\text{(iii) }&\tau_z\sigma_x(y)\stackrel{\eqref{eq:red4}}=\tau_{\sigma_x(z)}\sigma_x(y) \stackrel{\eqref{eq:red3}}=
\tau_{\sigma_{\tau_y(x)}(z)}\sigma_x(y)
\stackrel{\eqref{birack:2}}=
\sigma_{\tau_{\sigma_y(z)}(x)}\tau_z(y)\stackrel{\eqref{eq:red4}}=
\sigma_{\tau_z(x)}\tau_z(y)\stackrel{\eqref{eq:red3}}=
\sigma_x\tau_z(y). 
\end{align*}
\end{proof}

Gateva-Ivanova showed \cite{GI18} that $2$-reductive involutive solutions always satisfy Condition ({\bf lri}). 
Actually, involutive solutions are the only $2$-reductive ones satisfying ({\bf lri}).

\begin{lemma}
$2$-reductive solution satisfies $\sigma_x=\tau_x^{-1}$ if and only if it is involutive.
\end{lemma}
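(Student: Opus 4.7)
The plan is a short direct computation in both directions, relying on the 2-reductivity identities \eqref{eq:red1} and \eqref{eq:red2}.

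For the forward implication, I would assume $(X,\sigma,\tau)$ is involutive and recall that then $\tau_y(x)=\sigma_{\sigma_x(y)}^{-1}(x)$ for all $x,y\in X$ (this is the reformulation of $r^2=\mathrm{id}$ already stated in the introduction). Applying \eqref{eq:red1} to replace $\sigma_{\sigma_x(y)}$ by $\sigma_y$ immediately yields $\tau_y(x)=\sigma_y^{-1}(x)$, hence $\sigma_y=\tau_y^{-1}$ as required.

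For the converse, assume $(X,\sigma,\tau)$ is 2-reductive and that $\sigma_x=\tau_x^{-1}$ holds for every $x$. The goal is to show $r^2=\mathrm{id}_{X^2}$. A direct unfolding gives
\begin{align*}
r^2(x,y)=r(\sigma_x(y),\tau_y(x))=\bigl(\sigma_{\sigma_x(y)}\tau_y(x),\,\tau_{\tau_y(x)}\sigma_x(y)\bigr).
\end{align*}
Using \eqref{eq:red1} the first coordinate simplifies to $\sigma_y\tau_y(x)$, and by \eqref{eq:red2} the second coordinate simplifies to $\tau_x\sigma_x(y)$. The assumption (\textbf{lri}) then forces $\sigma_y\tau_y=\mathrm{id}$ and $\tau_x\sigma_x=\mathrm{id}$, so $r^2(x,y)=(x,y)$.

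No step is really an obstacle here; the only conceptual point is to use the correct one of the two involutivity reformulations, but the identities \eqref{eq:red1}--\eqref{eq:red4} are exactly what is needed to dispose of the nested $\sigma$'s and $\tau$'s after one application of~$r$.
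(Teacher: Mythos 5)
Your argument is correct in both directions, but it is organized differently from the paper's. For the implication from Condition~({\bf lri}) to involutivity, the paper never expands $r^2$: it instead shows $\sigma_x=\hat\sigma_x$ and $\tau_x=\hat\tau_x$, i.e.\ that $r$ coincides with $r^{-1}$, by combining ({\bf lri}) with the derived identities \eqref{eq:more2red} and the relations \eqref{rr:3}--\eqref{rr:4} linking $r$ and $r^{-1}$. Your direct computation $r^2(x,y)=\bigl(\sigma_{\sigma_x(y)}\tau_y(x),\,\tau_{\tau_y(x)}\sigma_x(y)\bigr)$, simplified by \eqref{eq:red1}, \eqref{eq:red2} and ({\bf lri}), is more elementary and bypasses the inverse-solution maps $\hat\sigma,\hat\tau$ altogether. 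For the converse implication the paper gives no computation at all: it relies on the remark preceding the lemma (Gateva-Ivanova's result that $2$-reductive involutive solutions satisfy ({\bf lri})), whereas you derive it in one line from the reformulation $\tau_y(x)=\sigma_{\sigma_x(y)}^{-1}(x)$ of involutivity together with \eqref{eq:red1}. So your proof is the more self-contained one; what the paper's phrasing buys is the explicit conclusion $\sigma=\hat\sigma$ and $\tau=\hat\tau$, which foreshadows the ({\bf lri})-type identities $\sigma_x=\hat\tau_x^{-1}$ and $\tau_x=\hat\sigma_x^{-1}$ used later in Theorems~\ref{thm:dislbilred} and~\ref{thm:disrbilred}.
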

\begin{proof}
Let $(X,\sigma,\tau)$ be $2$-reductive solution which satisfies Condition ({\bf lri}). Then, for each $x\in X$, $\sigma_x=\tau_x^{-1}$. Hence by \eqref{eq:more2red}, for every $x,y\in X$, we obtain
\begin{align*}
&\tau_y(x)=\sigma^{-1}_y(x)\stackrel{\eqref{eq:more2red}}=
\sigma^{-1}_{\tau_x^{-1}(y)}(x)
\stackrel{\eqref{rr:4}}{=}\hat\tau_x(y)
\quad{\rm and}\\
&\sigma_x(y)=\tau^{-1}_x(y)
\stackrel{\eqref{eq:more2red}}=
\tau^{-1}_{\sigma^{-1}_y(x)}(y)
\stackrel{\eqref{rr:3}}{=}\hat\sigma_x(y).
&& \qedhere
\end{align*}
\end{proof}
\noindent
In Section~\ref{sec:distr} we shall encounter similar identities, namely $\sigma_x=\hat\tau_x^{-1}$ and $\tau_x=\hat\sigma_x^{-1}$.
\vskip 2mm

Now we shall present a construction of, not necessarily involutive, $2$-reductive solutions based on  abelian groups and we will obtain an example of a family of $2$-reductive solutions. The idea of this combinatorial construction originates from involutive case (see \cite[Theorem 7.8]{JPZ20a}). The same construction for involutive solutions was described by Rump in the language of category theory under the name \emph{transvection torsor}  (see \cite[Definition 2]{Rump22}).

\begin{theorem}\label{thm:affmesh}
Let $I$ be a non-empty set, $(A_i)_{i\in I}$ be a family of abelian groups over $I$, $\bigcup_{i\in I} A_i$ be the disjoint union of the sets $A_i$, $c_{i,j},d_{i,j}\in A_j$, for $i,j\in I$, be some constants. Then $(\bigcup_{i\in I} A_i,\sigma,\tau)$, where  for $x\in A_i$, $y\in A_j$,
\begin{equation}\label{eq:7.8}
\sigma_x(y)=y+c_{i,j}\quad {\rm and} \quad \tau_y(x)= x+d_{j,i},
\end{equation}
is a $2$-reductive solution.
\end{theorem}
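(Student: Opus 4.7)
The plan is to verify four things: non-degeneracy of $\sigma$ and $\tau$, bijectivity of $r$, the three birack identities \eqref{birack:1}--\eqref{birack:3} (which together are equivalent to the braid relation \eqref{eq:braid}), and the four $2$-reductivity identities \eqref{eq:red1}--\eqref{eq:red4}. The unifying observation that makes everything essentially trivial is that, by construction, each $\sigma_x$ preserves every component $A_j$ setwise and depends only on the index $i$ with $x\in A_i$ (since $c_{i,j}$ does not involve $x$); the same applies to $\tau_y$.

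First I would confirm non-degeneracy: for $x\in A_i$, the restriction of $\sigma_x$ to $A_j$ is the translation $y\mapsto y+c_{i,j}$, a bijection of the group $A_j$; hence $\sigma_x$ is a bijection of $X$, and symmetrically for $\tau_y$. Bijectivity of $r$ is then immediate by writing the inverse explicitly: if $u\in A_j$ and $v\in A_i$, the unique preimage of $(u,v)$ under $r$ is $(v-d_{j,i},\;u-c_{i,j})$, where the indices $i,j$ are recovered from the components containing $v$ and $u$.

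The four identities \eqref{eq:red1}--\eqref{eq:red4} follow at once from the key observation: since $\sigma_x(y)$ and $\tau_x(y)$ both lie in the same component of $X$ as $y$, and since $\sigma_{(\cdot)}$ and $\tau_{(\cdot)}$ depend only on the component of their subscript, we immediately obtain $\sigma_{\sigma_x(y)}=\sigma_{\tau_x(y)}=\sigma_y$ and $\tau_{\sigma_x(y)}=\tau_{\tau_x(y)}=\tau_y$.

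It then remains to check the three birack identities. Fixing $x\in A_i$, $y\in A_j$, $z\in A_k$, evaluating both sides of \eqref{birack:1} on $z$ yields $z+c_{i,k}+c_{j,k}$ versus $z+c_{j,k}+c_{i,k}$, equal in the abelian group $A_k$; identity \eqref{birack:3} is strictly symmetric and reduces to commutativity in $A_i$; and \eqref{birack:2}, after tracking the indices through both sides, collapses to $y+c_{i,j}+d_{k,j}=y+d_{k,j}+c_{i,j}$ in $A_j$. The only real obstacle is pure bookkeeping: at each step one must keep track of which group $A_\ell$ an intermediate term lives in, in order to pick up the correct constant $c_{i,\ell}$ or $d_{j,\ell}$. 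Once the indices are aligned, every one of the three equalities is just an instance of the commutativity of a single $A_\ell$.
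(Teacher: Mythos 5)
Your proposal is correct and follows essentially the same route as the paper: bijectivity of $\sigma_x$, $\tau_y$ (and of $r$) via explicit inverses, the identities \eqref{birack:1}--\eqref{birack:3} reduced by index bookkeeping to commutativity of a single group, and the four $2$-reductivity identities read off from the facts that $\sigma_x,\tau_x$ preserve each component and depend only on the component containing the subscript. The only nitpick is a harmless index slip: evaluating \eqref{birack:3} at $z\in A_k$ uses commutativity of $A_k$, not $A_i$; otherwise your computations match the paper's verbatim.
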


\begin{proof}
Clearly, for each  $x\in A_i$ and $y\in A_j$, the mappings $\sigma_x$ and $\tau_y$ are bijections with 
\begin{equation*}
\sigma^{-1}_x(y)=y-c_{i,j}\quad {\rm and} \quad \tau^{-1}_y(x)= x-d_{j,i}.
\end{equation*}
Moreover, for $z\in A_k$
\begin{align*}
\sigma_x\sigma_y(z)&=\sigma_x(z+c_{j,k})=z+c_{j,k}+c_{i,k}=\\
&=z+c_{i,k}+c_{j,k}=\sigma_{(y+c_{i,j})}(z+c_{i,k})=\sigma_{(y+c_{i,j})}\sigma_{(x+d_{j,i})}(z)=\sigma_{\sigma_x(y)}\sigma_{\tau_y(x)}(z);\\
\tau_{\sigma_{\tau_y(x)}(z)}\sigma_x(y)&=
\tau_{\sigma_{(x+d_{j,i})}(z)}(y+c_{i,j})=
\tau_{(z+c_{i,k})}(y+c_{i,j})=y+c_{i,j}+d_{k,j}=\\
&=y+d_{k,j}+c_{i,j}=\sigma_{(x+d_{k,i})}(y+d_{k,j})=\sigma_{\tau_{(z+c_{j,k})}(x)}(y+d_{k,j})=
\sigma_{\tau_{\sigma_y(z)}(x)}\tau_{z}(y);\\
\tau_x\tau_y(z)&=\tau_x(z+d_{j,k})=z+d_{j,k}+d_{i,k}=\\
&=z+d_{i,k}+d_{j,k}=\tau_{(y+d_{i,j})}(z+d_{i,k})=\tau_{(y+d_{i,j})}\tau_{(x+c_{j,i})}(z)=
\tau_{\tau_x(y)}\tau_{\sigma_y(x)}(z),
\end{align*}
which shows that \eqref{birack:1}--\eqref{birack:3} are satisfied.

Further,
\begin{align*}
& \sigma_{\sigma_x(y)}(z)=\sigma_{(y+c_{i,j})}(z)=z+c_{j,k}=\sigma_y(z), \\
& \tau_{\tau_x(y)}(z)=\tau_{(y+d_{i,j})}(z)=z+d_{j,k}=\tau_y(z), \\
& \sigma_{\tau_x(y)}(z)=\sigma_{(y+d_{i,j})}(z)=z+c_{j,k}=\sigma_y(z),\\
& \tau_{\sigma_x(y)}(z)=\tau_{(y+c_{i,j})}(z)=z+d_{j,k}=\tau_y(z),
\end{align*}
which justifies $2$-reductivity.
\end{proof}

\begin{example}\label{ex:affmesh}
Let $I$ be a (finite or infinite) index set
and let $A_i$, for $i\in I$, be cyclic groups.
Let $(c_{i,j})_{i,j\in I}$ and $(d_{i,j})_{i,j\in I}$ be constants
such that $c_{i,j},d_{i,j}\in A_j$, for all~$i,j\in I$.
Then $(\bigcup_{i\in I} A_i,\sigma,\tau)$, with $\sigma$ and $\tau$
defined in \eqref{eq:7.8}, is a $2$-reductive solution. If for each~$j\in I$, there exists at least one~$i\in I$,
such that $c_{i,j}$ or $d_{i,j}$ is a generator of the group $A_j$ then orbits of the action of 
$\Mlt(X)$ equal to $A_j$.
\end{example}
In general, since $\sigma_x\tau_y(z)=z+c_{i,k}+d_{j,k}\in A_k$, the group $\Mlt(X)$ acts transitively on $A_k$ if and only if $A_k=\left<\{c_{i,k},d_{j,k}\mid i\in I\}\right>$, for every $k\in I$. Hence if we assume that 
\begin{align}\label{a:mesh}
A_j=\left<\{c_{i,j},d_{i,j}\mid i\in I\}\right>,\; {\rm for \;every} \;j\in I,
\end{align}
 then the solution has orbits of the action of $\Mlt(X)$ equal to $A_j$, $j\in I$ and each orbit is a permutational solution. 

We will denote the solution satisfying \eqref{a:mesh} by $\mathcal A=((A_i)_{i\in I},\,(c_{i,j})_{i,j\in I},\,(d_{i,j})_{i,j\in I})$ and call it \emph{the disjoint union, over a set $I$, of abelian groups}.

\begin{theorem}\label{th:2red}
A solution $(X,\sigma,\tau)$ is $2$-reductive if and only if it is a disjoint union, over a set $I$, of abelian groups. The orbits of the action of $\Mlt(X)$ coincide with the groups.
\end{theorem}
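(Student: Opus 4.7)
The forward direction follows from Theorem~\ref{thm:affmesh} combined with Example~\ref{ex:affmesh}, so the task is to prove the converse: every $2$-reductive solution equals some $\mathcal{A}=((A_i)_{i\in I},(c_{i,j}),(d_{i,j}))$ satisfying \eqref{a:mesh}, and the orbits of $\Mlt(X)$ are the $A_i$.

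My plan is as follows. Let $G=\Mlt(X)$ and let $\{A_i\}_{i\in I}$ be its orbits on $X$; these will be the underlying sets of the abelian groups. By Proposition~\ref{th:disabel}(iii), $G$ is abelian, so all points of $A_i$ share the same stabilizer $K_i\trianglelefteq G$. Fixing a base point $e_i\in A_i$ yields a $G$-equivariant bijection $A_i\cong G/K_i$ via $g(e_i)\mapsto gK_i$; I would pull back the abelian group structure of $G/K_i$ to $A_i$, giving it an abelian group structure with identity $e_i$. Under this identification every $g\in G$ acts on $A_i$ as translation by the element $g(e_i)\in A_i$: indeed, for $a=g(e_i)$, abelianness of $G$ gives $h(a)=hg(e_i)=gh(e_i)$, which in the group $A_i$ reads $a+h(e_i)$.

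The four identities \eqref{eq:red1}--\eqref{eq:red4}, together with \eqref{eq:more2red}, yield $\sigma_{g(x)}=\sigma_x$ and $\tau_{g(x)}=\tau_x$ for every $g\in G$ by induction on the length of a word for $g$ in the generators $\sigma_y,\tau_y$, so $\sigma_x$ and $\tau_x$ depend only on the orbit of $x$. I would then set
\[c_{i,j}:=\sigma_{e_i}(e_j)\in A_j, \qquad d_{i,j}:=\tau_{e_i}(e_j)\in A_j,\]
and obtain, for $x\in A_i$ and $y\in A_j$, the formulas $\sigma_x(y)=\sigma_{e_i}(y)=y+c_{i,j}$ and $\tau_y(x)=\tau_{e_j}(x)=x+d_{j,i}$, exactly matching \eqref{eq:7.8}. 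Finally, since $G$ is generated by the $\sigma_x,\tau_x$ and each acts on $A_j$ as translation by some $c_{i,j}$ or $d_{i,j}$, the orbit $A_j=G\cdot e_j$ coincides with the subgroup $\langle c_{i,j},d_{i,j}\mid i\in I\rangle$, which is precisely \eqref{a:mesh}.

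The main obstacle is the second paragraph: equipping each bare orbit with a well-defined abelian group structure for which the whole $G$-action becomes translation. This is where the abelianness of $\Mlt(X)$, and hence the full force of $2$-reductivity (via all four identities combined, Proposition~\ref{th:disabel}(iii)), is genuinely used; once each $A_i$ is a group and every $\sigma_x,\tau_x$ is a translation, the remaining verifications reduce to a routine comparison with \eqref{eq:7.8}.
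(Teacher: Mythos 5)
Your proposal is correct and follows essentially the same route as the paper: abelianness of $\Mlt(X)$ from Proposition~\ref{th:disabel}(iii), an abelian group structure on each orbit with a chosen base point on which every element of $\Mlt(X)$ acts as a translation, the observation that $\sigma_x$ and $\tau_x$ depend only on the orbit of~$x$, and the constants $c_{i,j}=\sigma_{e_i}(e_j)$, $d_{i,j}=\tau_{e_i}(e_j)$ recovering \eqref{eq:7.8} and \eqref{a:mesh}. Your only deviation is packaging the group structure as the pullback of $\Mlt(X)/\mathrm{Stab}(e_i)$ rather than defining $\alpha(e)+\beta(e)=\alpha\beta(e)$ directly, which merely makes the well-definedness explicit.
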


\begin{proof}
By comment after Example \ref{ex:affmesh}, the disjoint union, over a set $I$, of abelian groups is $2$-reductive solution with orbits of the action of $\Mlt(X)$ equal to $A_j$, $j\in I$.

Now let $(X,\sigma,\tau)$ be a 2-reductive solution, and choose a transversal $E$ to the orbit decomposition of the action of $\Mlt(X)$.
By Theorem \ref{th:disabel}, the group $\Mlt(X)$ is abelian. Hence, for every $e\in E$, the orbit $Xe=\{\alpha(e)\mid \alpha \in \Mlt(X)\}$ 
can be endowed with an abelian group structure $(Xe,+,-,e)$ defined by $\alpha(e)+\beta(e)=\alpha\beta(e)$ and $-\alpha(e)=\alpha^{-1}(e)$, for $\alpha,\beta\in \Mlt(X)$.

Let, for every $e,f\in E$,
\[
c_{e,f}:=\sigma_e(f)\in Xf\; {\rm and}\; d_{e,f}:=\tau_e(f)\in Xf.
\]

Since $\Mlt(X)$ is abelian, and the solution is $2$-reductive then, for each $\alpha\in \Mlt(X)$, we have $\sigma_{\alpha(e)}=\sigma_e$ and $\tau_{\alpha(e)}=\tau_e$. This implies that the set 
\begin{align*}
&\{c_{e,f}, d_{e,f}\mid e\in E\}=
\{\sigma_e(f),\tau_e(f)\mid e\in E\}=\\
&\{\sigma_{\alpha(e)}(f),\tau_{\alpha(e)}(f)\mid \alpha\in \Mlt(X), e\in E\}=
\{\sigma_x(f),\tau_x(f)\mid x\in X\}
\end{align*}
generates the group $(Xf,+,-,f)$.
This shows that 
the disjoint union of abelian groups over a set~$E$,
$((Xe)_{e\in E},\,(c_{e,f})_{e,f\in E},\, (d_{e,f})_{e,f\in E})$
has exactly the same orbits as the solution~$X$.

Finally, let $x=\alpha(e)\in Xe$ and $y=\beta(f)\in Xf$ with $\alpha, \beta\in \Mlt(X)$. Therefore we obtain
\begin{align*}
&\sigma_x(y)=\sigma_{\alpha(e)}\beta(f)=\sigma_e(f)+\beta(f)=c_{e,f}+y,\; {\rm and}\\
&
\tau_y(x)=\tau_{\beta(f)}\alpha(e)=\tau_f(e)+\alpha(e)=d_{f,e}+x.
\end{align*}
So we verified that the disjoint union of abelian groups $((Xe)_{e\in E},\,(c_{e,f})_{e,f\in E},\, (d_{e,f})_{e,f\in E})$ yields the original solution $(X,\sigma,\tau)$.
\end{proof}

Note that the disjoint union of abelian groups is 
square free if and only if $c_{i,i}=d_{i,i}=0$, for each $i\in I$ and it is involutive if and only if $d_{i,j}=-c_{i,j}$ (see
\cite{JPZ20a}).

\begin{prop}\label{prop:inv_2red}
 Let $(X,\sigma,\tau)$ be a disjoint union of
 $\mathcal A=((A_i)_{i\in I},\,(c_{i,j})_{i,j\in I},\,(d_{i,j})_{i,j\in I})$.
 Then $(X,\hat\sigma,\hat\tau)$ is the disjoint union of
 $\mathcal A=((A_i)_{i\in I},\,(-d_{i,j})_{i,j\in I},\,(-c_{i,j})_{i,j\in I})$.
\end{prop}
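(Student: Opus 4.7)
The approach I would take is a direct inversion of the map $r$ in the explicit parametrization of Theorem~\ref{thm:affmesh}. By definition $r^{-1}(x,y)=(\hat\sigma_x(y),\hat\tau_y(x))$, so it suffices to compute $r^{-1}$ from its formula and then read off $\hat\sigma$ and $\hat\tau$.

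First, I would fix $x\in A_i$ and $y\in A_j$ and look for the unique $(a,b)\in X^2$ with $r(a,b)=(x,y)$. If $a\in A_k$ and $b\in A_\ell$, then by \eqref{eq:7.8}
\[
r(a,b)=(b+c_{k,\ell},\,a+d_{\ell,k})\in A_\ell\times A_k.
\]
Matching the ambient groups against $(x,y)\in A_i\times A_j$ forces $\ell=i$ and $k=j$. The two resulting equations $b+c_{j,i}=x$ in $A_i$ and $a+d_{i,j}=y$ in $A_j$ are solved uniquely in these abelian groups by
\[
b=x+(-c_{j,i}),\qquad a=y+(-d_{i,j}).
\]
Therefore
\[
r^{-1}(x,y)=\bigl(y+(-d_{i,j}),\,x+(-c_{j,i})\bigr).
\]

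Second, comparing with $r^{-1}(x,y)=(\hat\sigma_x(y),\hat\tau_y(x))$ yields, for $x\in A_i$ and $y\in A_j$,
\[
\hat\sigma_x(y)=y+(-d_{i,j}),\qquad \hat\tau_y(x)=x+(-c_{j,i}).
\]
These are exactly the formulas of Theorem~\ref{thm:affmesh} for the disjoint union of abelian groups $((A_i)_{i\in I},(-d_{i,j})_{i,j\in I},(-c_{i,j})_{i,j\in I})$, with the roles of the two constant families exchanged (and negated). This gives the claim.

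I do not expect any real obstacle here: the only thing to be careful about is the index bookkeeping, namely distinguishing the source and target indices of the constants $c_{i,j}$ and $d_{i,j}$ and remembering that the second subscript tells in which group the constant lives. Once the index match $\ell=i$, $k=j$ is made correctly, the rest is a one-line computation, and the resulting $2$-reductivity of $(X,\hat\sigma,\hat\tau)$ is automatic since $(X,r^{-1})$ is known to be a solution and it falls into the form described by Theorem~\ref{thm:affmesh}.
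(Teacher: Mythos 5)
Your proposal is correct and is essentially the same argument as the paper's: both amount to computing $r^{-1}$ explicitly on the parametrized union, the paper doing it in one line via the general identity \eqref{rr:3} together with \eqref{eq:more2red} (which give $\hat\sigma_x=\tau_x^{-1}$ and, analogously, $\hat\tau_x=\sigma_x^{-1}$), while you solve $r(a,b)=(x,y)$ directly using the disjointness of the union --- the same computation carried out by hand. Your index bookkeeping and the resulting formulas $\hat\sigma_x(y)=y-d_{i,j}$, $\hat\tau_y(x)=x-c_{j,i}$ match the paper's conclusion exactly.
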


\begin{proof}
 According to \eqref{rr:3}, $\hat\sigma_x(y)=\tau_{\sigma_y^{-1}(x)}^{-1}(y)\stackrel{\eqref{eq:more2red}}{=}
 \tau_x^{-1}(y)\stackrel{\eqref{eq:7.8}}{=}y-d_{i,j}$.
 Analogously for $\hat\tau$.
\end{proof}

\begin{theorem}\label{th:iso}
Let  $\mathcal A=((A_i)_{i\in I},\,(c_{i,j})_{i,j\in I},\,(d_{i,j})_{i,j\in I})$ and  $\mathcal{A}^{'}=((A^{'}_i)_{i\in I},\,(c^{'}_{i,j})_{i,j\in I},\,(d^{'}_{i,j})_{i,j\in I})$ be two disjoint unions of abelian groups, over the same index set $I$.
Then the unions $\mathcal A$ and $\mathcal{A}^{'}$ are isomorphic $2$-reductive solutions $(\bigcup\limits_{i\in I} A_i,\sigma,\tau)$ and $(\bigcup\limits_{i\in I} A^{'}_i,\sigma^{'},\tau^{'})$ if and only if there is a bijection $\pi$ of the set $I$ and group isomorphisms $\psi_i\colon A_i\to A^{'}_{\pi (i)}$ such that 
\begin{align}\label{a:homol}
&\psi_j(c_{i,j})=c^{'}_{\pi (i),\pi (j)}\quad {\rm and}\quad \psi_j(d_{i,j})=d^{'}_{\pi (i),\pi (j)},
\end{align} for every $i,j\in I$.
\end{theorem}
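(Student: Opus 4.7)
The plan is to treat the two directions separately. The \emph{if} direction is a direct verification: given the bijection $\pi$ and the family of group isomorphisms $\psi_i$, I would define $\Phi\colon \bigcup_{i\in I} A_i \to \bigcup_{i\in I} A'_i$ piecewise by $\Phi|_{A_i}=\psi_i$. This is automatically a bijection between the disjoint unions, and one only needs to check that $\Phi$ intertwines the two solutions. For $x\in A_i$ and $y\in A_j$, combining \eqref{eq:7.8}, the homomorphism property of $\psi_j$, and \eqref{a:homol} yields $\Phi(\sigma_x(y))=\psi_j(y+c_{i,j})=\psi_j(y)+\psi_j(c_{i,j})=\Phi(y)+c'_{\pi(i),\pi(j)}=\sigma'_{\Phi(x)}(\Phi(y))$, and an entirely symmetric computation handles $\tau$.

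For the converse, let $\Phi$ be a solution isomorphism. The key structural fact is that $\Phi$ must send orbits of $\Mlt(X)$ bijectively to orbits of $\Mlt(X')$, and by Theorem~\ref{th:2red} these orbits are exactly the subgroups $A_i$ and $A'_j$. This produces a bijection $\pi$ of $I$ with $\Phi(A_i)=A'_{\pi(i)}$. To define the candidate group isomorphisms, I would fix the identity $0_j$ of $A_j$ and set $\psi_j(y):=\Phi(y)-\Phi(0_j)$, with the subtraction taken in $A'_{\pi(j)}$. The defining relation $\Phi\sigma_x=\sigma'_{\Phi(x)}\Phi$ combined with \eqref{eq:7.8} immediately forces $\Phi(y+c_{i,j})=\Phi(y)+c'_{\pi(i),\pi(j)}$ for all $y\in A_j$ (and analogously for $d_{i,j}$ via $\tau$); substituting $y=0_j$ establishes \eqref{a:homol} for both $c$ and $d$ in one line.

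The main obstacle is verifying that $\psi_j$ is genuinely a group homomorphism, since \emph{a priori} $\Phi|_{A_j}$ is only a set-theoretic bijection. The strategy is to bootstrap from the translation-type identities just derived: rewriting them as $\Phi(y+c_{i,j})-\Phi(y)=c'_{\pi(i),\pi(j)}$ and iterating (together with the version obtained by replacing $y$ by $y-c_{i,j}$ to accommodate inverses, and analogously for the $d$'s) shows by induction on word length that $\Phi(y+a)-\Phi(y)$ depends only on $a$, for every $a$ lying in the subgroup of $A_j$ generated by $\{c_{i,j},d_{i,j}\mid i\in I\}$. The generation hypothesis \eqref{a:mesh} is precisely what forces this subgroup to equal $A_j$. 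Consequently the map $a\mapsto \Phi(y+a)-\Phi(y)$ is independent of $y$, and specialising at $y=0_j$ identifies it with $\psi_j$; then $\psi_j(a_1+a_2)=\Phi(a_1+a_2+0_j)-\Phi(0_j)=\psi_j(a_1)+\psi_j(a_2)$ follows by applying the translation identity twice. Bijectivity of $\psi_j$ is inherited from $\Phi|_{A_j}$, completing the argument.
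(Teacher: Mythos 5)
Your proposal is correct and follows essentially the same route as the paper: the same piecewise-defined map for the \emph{if} direction, and for the converse the same use of orbit preservation, the same definition $\psi_j(y)=\Phi(y)-\Phi(0_j)$, and the same reliance on the generation hypothesis \eqref{a:mesh} to upgrade the translation identities to the homomorphism property. Your explicit bootstrapping induction merely spells out the step the paper leaves implicit after \eqref{eq:3}.
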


\begin{proof}
The proof goes in similar way as the proof of \cite[Theorem 4.2]{JPSZ} for medial quandles in the case of 2-reductive ones.

\noindent
$(\Leftarrow)$
Let us define a mapping $\psi:\bigcup A_i\to\bigcup A_i'$ by $$\psi(x)=\psi_i(x),$$ for every $x\in A_i$. We will prove that $\psi$ is an isomorphism between the solutions. It is clearly a bijection. Let $x\in A_i$, $y\in A_j$. Using the fact that $\psi_j$ is a group homomorphism, we obtain
\begin{align*}
&\psi(\sigma_x(y))=\psi_j(\sigma_x(y))=
\psi_j(y)+\psi_j(c_{i,j}) \; {\rm and}\\
& \psi(\tau_x(y))=\psi_j(\tau_x(y))=
\psi_j(y)+\psi_j(d_{i,j}).
\end{align*}
On the other hand,
\begin{align*}
&\sigma^{'}_{\psi(x)}(\psi(y))=\sigma^{'}_{\psi_i(x)}(\psi_j(y))=
\psi_j(y)+c^{'}_{\pi(i),\pi(j)}\; {\rm and}\;\\
&\tau^{'}_{\psi(x)}(\psi(y))=\tau^{'}_{\psi_i(x)}(\psi_j(y))=
\psi_j(y)+d^{'}_{\pi(i),\pi(j)}.
\end{align*}
By \eqref{a:homol} we can see the two expressions are equal.

\noindent
$(\Rightarrow)$
Let $f$ be an isomorphism between the two disjoint unions of abelian groups. Since isomorphisms preserve orbits, there is a permutation $\pi$ of $I$ such that $f(A_i)=A_{\pi(i)}'$ for every $i\in I$. 

Let, for $j\in I$, $0_j\in A_j$ be the neutral element in the group $A_j$. Let define the mappings
\begin{equation*}
\psi_i\colon A_i\to f(A_i)=A_{\pi(i)}';\qquad \psi_i(x)=f(x)-f(0_i),
\end{equation*}
for every $i\in I$.  
Since $f$ is an isomorphism of solutions for $x\in A_i$ we have:
\begin{align*}
f(0_j)+c^{'}_{\pi(i),\pi(j)}=\sigma^{'}_{f(x)}(f(0_j))=f(\sigma_x(0_j))=f(0_j+c_{i,j})=f(c_{i,j})=\psi_j(c_{i,j})+f(0_j).
\end{align*}
This gives $\psi_j(c_{i,j})=c^{'}_{\pi(i),\pi(j)}$. Similarly,
\begin{align*}
f(0_j)+d^{'}_{\pi(i),\pi(j)}=\tau^{'}_{f(x)}(f(0_j))=f(\tau_x(0_j))=f(0_j+d_{i,j})=f(d_{i,j})=\psi_j(d_{i,j})+f(0_j),
\end{align*}
and $\psi_j(d_{i,j})=d^{'}_{\pi(i),\pi(j)}$.

To verify that the mappings $\psi_j$ are automorphisms of groups, let  for $x\in A_i$ and $y\in A_j$, consider:
\begin{align*}
&f(\sigma_x(y))=f(y+c_{i,j})=\psi_j(y+c_{i,j})+f(0_j)\; {\rm and}\\
&f(\tau_x(y))=f(y+d_{i,j})=\psi_j(y+d_{i,j})+f(0_j).
\end{align*}
On the other hand,
\begin{align*}
&f(\sigma_x(y))=\sigma^{'}_{f(x)}(f(y))=c_{\pi(i),\pi(j)}'+ f(y)=
\psi_j(c_{i,j})+\psi_j(y)+f(0_j)\; {\rm and}\\
&f(\tau_x(y))=\tau^{'}_{f(x)}(f(y))=d_{\pi(i),\pi(j)}'+ f(y)=
\psi_j(d_{i,j})+\psi_j(y)+f(0_j)
\end{align*}
Cancelling $f(0_j)$ we obtain
\begin{equation}\label{eq:3}
\psi_j(c_{i,j}+y)=
\psi_j(c_{i,j})+\psi_j(y)\quad {\rm and}\quad \psi_j(d_{i,j}+y)=
\psi_j(d_{i,j})+\psi_j(y).
\end{equation}
for every $y\in A_j$.

By assumption, every group $A_j$ is generated by all the elements $c_{i,j}$ and $d_{i,j}$, $i\in I$.
Hence \eqref{eq:3} implies $\psi_j(x+y)=\psi_j(x)+\psi_j(y)$ for every $x,y\in A_j$, i.e., $\psi_j$ is an automorphism of groups.
\end{proof}

If $I$ is a finite set we will usually display a disjoint union of abelian groups as a triple $((A_i)_{i\in I},C,D)$, where $C=\,(c_{i,j})_{i,j\in I}$ and $D=\,(d_{i,j})_{i,j\in I}$ are $\left|I\right|\times \left|I\right|$ matrices of constants.
\vskip 2mm

We can construct all $2$-reductive solutions of~size~$n$ using
the following algorithm:
\begin{algorithm}
Outputs all $2$-reductive solutions of size~$n$:
\begin{enumerate}
 \item For all partitionings $n=n_1+n_2+\cdots +n_k$ do (2)--(4).
 \item For all abelian groups $A_1$, \dots ,$A_k$ of size $|A_i|=n_i$ do (3)--(4).
 \item For all constants $c_{i,j},d_{i,j}\in A_j$, $1\leq i,j\leq k$, do (4).
 \item If, for all $1\leq j\leq k$, we have $A_j=\langle \{c_{i,j},d_{i,j}\mid 1\leq i\leq k\}\rangle$
 then construct a solution $(\bigcup A_i,\sigma,\tau)$ using \eqref{eq:7.8}.
\end{enumerate}
\end{algorithm}
When all solutions are constructed, we can get rid of isomorphic
copies using Theorem~\ref{th:iso}.

\begin{example}
Up to isomorphism, there are exactly fourteen $2$-reductive solutions of size 3. They are the following disjoint unions of abelian groups:
\begin{itemize}
    \item One orbit: $(\Z_3,(1),(1))$, $(\Z_3,(0),(1))$, $(\Z_3,(1),(0))$.

        \item Two orbits: $(\Z_2\cup\Z_1,\left(\begin{smallmatrix}1&0\\0&0\end{smallmatrix}\right), \left(\begin{smallmatrix}0&0\\0&0\end{smallmatrix}\right))$,         
        $(\Z_2\cup\Z_1,\left(\begin{smallmatrix}1&0\\0&0\end{smallmatrix}\right), \left(\begin{smallmatrix}1&0\\0&0\end{smallmatrix}\right))$,        
        $(\Z_2\cup\Z_1,\left(\begin{smallmatrix}1&0\\0&0\end{smallmatrix}\right), \left(\begin{smallmatrix}0&0\\1&0\end{smallmatrix}\right))$,\\        
        $(\Z_2\cup\Z_1,\left(\begin{smallmatrix}1&0\\0&0\end{smallmatrix}\right), \left(\begin{smallmatrix}1&0\\1&0\end{smallmatrix}\right))$, $(\Z_2\cup\Z_1,\left(\begin{smallmatrix}0&0\\1&0\end{smallmatrix}\right),\left(\begin{smallmatrix}0&0\\0&0\end{smallmatrix}\right))$, $(\Z_2\cup\Z_1,\left(\begin{smallmatrix}0&0\\1&0\end{smallmatrix}\right),\left(\begin{smallmatrix}1&0\\0&0\end{smallmatrix}\right))$, $(\Z_2\cup\Z_1,\left(\begin{smallmatrix}0&0\\1&0\end{smallmatrix}\right),\left(\begin{smallmatrix}0&0\\1&0\end{smallmatrix}\right))$,\\        
$(\Z_2\cup\Z_1,\left(\begin{smallmatrix}0&0\\1&0\end{smallmatrix}\right),\left(\begin{smallmatrix}1&0\\1&0\end{smallmatrix}\right))$, $(\Z_2\cup\Z_1,\left(\begin{smallmatrix}1&0\\1&0\end{smallmatrix}\right), \left(\begin{smallmatrix}0&0\\0&0\end{smallmatrix}\right))$, $(\Z_2\cup\Z_1,\left(\begin{smallmatrix}1&0\\1&0\end{smallmatrix}\right), \left(\begin{smallmatrix}1&0\\0&0\end{smallmatrix}\right))$, $(\Z_2\cup\Z_1,\left(\begin{smallmatrix}1&0\\1&0\end{smallmatrix}\right), \left(\begin{smallmatrix}0&0\\1&0\end{smallmatrix}\right))$,\\
$(\Z_2\cup\Z_1,\left(\begin{smallmatrix}1&0\\1&0\end{smallmatrix}\right), \left(\begin{smallmatrix}1&0\\1&0\end{smallmatrix}\right))$.

\item Three orbits: $(\Z_1\cup\Z_1\cup\Z_1,\,\left(\begin{smallmatrix}0&0&0\\0&0&0\\0&0&0\end{smallmatrix}\right),\left(\begin{smallmatrix}0&0&0\\0&0&0\\0&0&0\end{smallmatrix}\right))$.
\end{itemize}
Five of them are involutive and three are square free.
Furthermore, there are exactly ninety six $2$-reductive solutions of size $4$:
\begin{itemize}
    \item 3 with one orbit $\Z_4$,
        \item 20 with two orbits:
        $\Z_3$ and $\Z_1$ and 42 with two orbits 
        $\Z_2$ and $\Z_2$,
         \item 30 with three orbits: $\Z_2$, $\Z_1$ and $\Z_1$,
        \item 1 with four orbits each equal to $\Z_1$.
\end{itemize}
\end{example}

The representation of $2$-reductive solutions as a disjoint union of abelian groups allows one to quickly verify the conditions defined in Proposition \ref{prop:star}. 
\begin{rem}\label{rem:star}
Let a $2$-reductive solution $(X,\sigma,\tau)$ be  a disjoint union of abelian groups over a set $I$. Then $(X,\sigma,\tau)$ satisfies \eqref{prop:star:1} - \eqref{prop:star:2} if and only if
\[
\forall i\in I\quad\exists j\in I,\quad\text{such that}\quad c_{j,i}=d_{j,i}=0.
\]
\end{rem}

Injective solutions were investigated, among others,  by Soloviev in \cite{Sol}. He showed that some properties of such solutions are similar to that of involutive ones. He also gave some criterions to recognize injective solutions. In particular, he characterized affine injective solutions. 
Here we have presented
$2$-reductive solutions as disjoint unions of abelian groups. This presentation shows, in fact, that each their component is an affine solution. Hence, it allows us to formulate some conditions for the elements lying in diagonals in matrices of constants.

\begin{prop}\cite{Sol}
 Let $(X,\sigma,\tau)$ be a disjoint union of
 $\mathcal A=((A_i)_{i\in I},\,(c_{i,j})_{i,j\in I},\,(d_{i,j})_{i,j\in I})$. If $(X,\sigma,\tau)$ is injective then,
 for all $i\in I$, $c_{i,i}=-d_{i,i}$.
\end{prop}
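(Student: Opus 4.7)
The plan is to exploit the defining relation $x\circ y=\sigma_x(y)\circ\tau_y(x)$ of the structure group $G(X,r)$ at a carefully chosen pair of elements within a single orbit, so as to produce an equation of the form $a\circ b=a\circ c$ in $G(X,r)$. Cancelling the common left factor (legitimate, since $G(X,r)$ is a group) and then invoking injectivity of the canonical map $X\hookrightarrow G(X,r)$ will translate this into an equation inside $X$ itself, from which the claim drops out.

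Concretely, fix $i\in I$ and let $0_i$ denote the neutral element of the abelian group $A_i$. Take $x:=c_{i,i}\in A_i$ and $y:=0_i\in A_i$; since both lie in the same orbit $A_i$, the formulas \eqref{eq:7.8} of Theorem~\ref{thm:affmesh} yield
\[
\sigma_x(y)=0_i+c_{i,i}=c_{i,i}\quad\text{and}\quad \tau_y(x)=c_{i,i}+d_{i,i},
\]
so the structure-group relation specialises to
\[
c_{i,i}\circ 0_i \;=\; c_{i,i}\circ (c_{i,i}+d_{i,i})\qquad\text{in }G(X,r).
\]
Left-cancelling $c_{i,i}$ in the group $G(X,r)$ gives $0_i=c_{i,i}+d_{i,i}$ in $G(X,r)$. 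Both sides are elements of $A_i\subseteq X$, so injectivity of $X\to G(X,r)$ forces the equality $c_{i,i}+d_{i,i}=0_i$ already in $A_i$, i.e.\ $c_{i,i}=-d_{i,i}$.

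There is no substantial obstacle here: the only point requiring a moment's thought is choosing a substitution in which $\sigma_x(y)$ coincides with $x$, so that cancellation yields a nontrivial identity; the pair $x=c_{i,i}$, $y=0_i$ (or, equivalently, $x=0_i$, $y=-c_{i,i}$) achieves this. One could run the symmetric argument with the roles of $\sigma$ and $\tau$ swapped, but it produces the same identity, which explains why a single scalar equation per orbit is obtained.
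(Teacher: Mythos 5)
Your argument is correct and is essentially the paper's own proof: both specialize the structure-group relation $x\circ y=\sigma_x(y)\circ\tau_y(x)$ to a pair with $\sigma_x(y)=x$, cancel the common left factor in $G(X,r)$, and use injectivity of $X\to G(X,r)$ to get $c_{i,i}+d_{i,i}=0$ in $A_i$. The only cosmetic difference is that you pick the concrete pair $x=c_{i,i}$, $y=0_i$ and compute with the affine formulas \eqref{eq:7.8}, whereas the paper takes an arbitrary $x\in A_i$ with $y=\sigma_x^{-1}(x)$ and invokes \eqref{eq:more2red}.
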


\begin{proof}
 Let $x\in A_i$, for $i\in I$. Then, in the structure group $G(X,r)$ of the solution $(X,\sigma,\tau)$,
 \[ x\circ \sigma_{x}^{-1}(x)=\sigma_x\sigma_x^{-1}(x)\circ \tau_{\sigma_{x}^{-1}(x)}(x)\stackrel{\eqref{eq:more2red}}= x \circ \tau_x(x)
 \]
and, by cancellativity, $\sigma_x^{-1}(x)=\tau_x(x)$ and therefore $-c_{i,i}=d_{i,i}$, for all $i\in I$.
\end{proof}

\begin{proposition}
 Let $(X,\sigma,\tau)$ be a disjoint union of
 $\mathcal A=((A_i)_{i\in I},\,(c_{i,j})_{i,j\in I},\,(d_{i,j})_{i,j\in I})$. If $(X,\sigma,\tau)$ is injective then,
 for all $i,j\in I$,
 $o(c_{i,j}+d_{i,j})=o(c_{j,i}+d_{j,i})$.
\end{proposition}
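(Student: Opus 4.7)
The plan is to iterate the defining relation $x\circ y=\sigma_x(y)\circ\tau_y(x)$ of the structure group $G(X,r)$ twice, and then to combine cancellativity in $G(X,r)$ with the injectivity of the canonical map $X\hookrightarrow G(X,r)$. Set $a:=c_{i,j}+d_{i,j}\in A_j$ and $b:=c_{j,i}+d_{j,i}\in A_i$; the goal is to prove $o(a)=o(b)$.

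First I would fix arbitrary $x\in A_i$ and $y\in A_j$ and compute, inside $G(X,r)$,
\[x\circ y=\sigma_x(y)\circ\tau_y(x)=(y+c_{i,j})\circ(x+d_{j,i}).\]
A second application of the structure group relation, this time to $y+c_{i,j}\in A_j$ and $x+d_{j,i}\in A_i$, yields
\[x\circ y=(x+d_{j,i}+c_{j,i})\circ(y+c_{i,j}+d_{i,j})=(x+b)\circ(y+a).\]
A routine induction then gives $x\circ y=(x+nb)\circ(y+na)$ in $G(X,r)$ for every $n\geq 0$.

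Next I would take $n=o(a)$ so that $y+na=y$ in $A_j$; the equation collapses to $x\circ y=(x+nb)\circ y$, and right-cancellativity in the group $G(X,r)$ yields $x=x+nb$ in $G(X,r)$. Since $(X,r)$ is injective, this equality transports back to $X$, and since $x,x+nb$ both lie in $A_i$, it forces $nb=0$ in $A_i$, whence $o(b)\mid o(a)$. An entirely symmetric argument with $n=o(b)$ and left-cancellativity produces $o(a)\mid o(b)$, and the two divisibilities combine to the claimed equality.

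The main obstacle I expect is purely bookkeeping: at each substitution one must correctly track which orbit $A_k$ the current element lives in so as to apply the right pair of constants from $(c_{i,j})$ and $(d_{i,j})$. The conceptual step is recognising that after \emph{two} applications of the relation every element returns to its original orbit shifted by $a$ or $b$; once the symmetric identity $x\circ y=(x+b)\circ(y+a)$ is secured, cancellativity in $G(X,r)$ together with injectivity finishes the argument almost automatically.
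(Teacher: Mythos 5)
Your proposal is correct and follows essentially the same route as the paper's proof: both apply the structure-group relation twice to obtain $x\circ y=(x+n(c_{j,i}+d_{j,i}))\circ(y+n(c_{i,j}+d_{i,j}))$ in $G(X,r)$, then choose $n$ to kill one shift, cancel in the group, and invoke injectivity of $X\to G(X,r)$. The only cosmetic difference is that the paper argues by contradiction assuming one order is strictly smaller, whereas you extract the two divisibilities $o(c_{j,i}+d_{j,i})\mid o(c_{i,j}+d_{i,j})$ and conversely.
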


\begin{proof}
 Let $x\in A_i$ and $y\in A_j$, for $i,j\in I$. In the structure group $G(X,r)$ we have
 \[x \circ y=\sigma_{x}(y) \circ \tau_{y}(x)=\sigma_{\sigma_{x}(y)}\tau_y(x) \circ \tau_{\tau_{y}(x)}\sigma_x(y)
 \stackrel{\eqref{eq:red1},\eqref{eq:red2}}=
 \sigma_{y}\tau_y(x) \circ\tau_{x}\sigma_x(y).
 \]
 Suppose that there exist $i,j\in I$ such that $k=o(c_{i,j}+d_{i,j})<o(c_{j,i}+d_{j,i})$. Then we obtain
 \[x \circ y=\sigma^k_y\tau_y^k(x) \circ\tau_x^k\sigma_x^k(y)=
 (x+kc_{j,i}+kd_{j,i}) \circ (y+kc_{i,j}+kd_{i,j})=
  (x+kc_{j,i}+kd_{j,i}) \circ y
 \]
 and we obtain $x=x+k(c_{j,i}+d_{j,i})\neq x$ in the structure group.
 Hence $(X,\sigma,\tau)$ is not an injective solution.
\end{proof}

\begin{example}
Let $(\{x,y,z\},\sigma, \tau)$ be the union of $(\Z_2\cup\Z_1,\left(\begin{smallmatrix}0&0\\0&0\end{smallmatrix}\right),\left(\begin{smallmatrix}0&0\\1&0\end{smallmatrix}\right)).$
This square-free solution is neither involutive, since $c_{2,1}\neq -d_{2,1}$,
nor injective since $c_{2,1}+d_{2,1}=1$ and $c_{1,2}+d_{1,2}=0$. Indeed, we have
$\sigma_x=\sigma_y=\sigma_z=\tau_x=\tau_y=\mathrm{id}_X$ and $\tau_z=(xy)$. Then
\[x \circ z= z \circ y=y \circ z\]
and the structure group of this solution is a free abelian group with two generators $\{x,z\}$.
\end{example}

\begin{problem}
For a finite $2$-reductive solution  $\mathcal A=((A_i)_{i\in I},\,C,\,D)$, find a necessary and sufficient condition on the matrices $C$ and $D$ so that the solution is injective.
\end{problem}

There were three equivalences defined on~$X$ in Section~\ref{sec:prelim}, namely $\sim$, $\backsim$ and $\approx$; only the last one is a congruence in general.
In the case of $2$-reductive solutions all of them are congruences.

\begin{prop}
 Let $(X,\sigma,\tau)$ be a $2$-reductive solution. Then $\sim$ and $\backsim$ are congruences of~$(X,\sigma,\tau)$.
\end{prop}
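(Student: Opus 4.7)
The plan is to verify directly from \eqref{congr} that each of $\sim$ and $\backsim$ passes the definition of congruence, and the key observation is that all the 2-reductivity identities \eqref{eq:red1}--\eqref{eq:red4}, together with their variants \eqref{eq:more2red}, say exactly that $\sigma_{f(x,y)}$ depends only on $y$ and $\tau_{g(x,y)}$ depends only on $y$, whenever $f$ or $g$ is one of $\sigma_x$, $\sigma_x^{-1}$, $\tau_x$, $\tau_x^{-1}$. So the first coordinate in these expressions is essentially irrelevant and we need only propagate equality in the second coordinate.

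Concretely, to show $\sim$ is a congruence, I would take $x_1\sim x_2$ and $y_1\sim y_2$ and check the four statements
$\sigma^{\varepsilon}_{x_1}(y_1)\sim \sigma^{\varepsilon}_{x_2}(y_2)$ and $\tau^{\varepsilon}_{x_1}(y_1)\sim\tau^{\varepsilon}_{x_2}(y_2)$ for $\varepsilon=\pm 1$, i.e.\ that $\sigma$ evaluated at these elements gives the same permutation. For example, using \eqref{eq:red1} and then $y_1\sim y_2$,
\[
\sigma_{\sigma_{x_1}(y_1)}\eqrel{eq:red1}{=}\sigma_{y_1}=\sigma_{y_2}\eqrel{eq:red1}{=}\sigma_{\sigma_{x_2}(y_2)}.
\]
The other three verifications are identical, using \eqref{eq:red3} for $\tau_{x_i}(y_i)$ and the two identities of \eqref{eq:more2red} involving $\sigma_{\sigma^{-1}_x(y)}=\sigma_y$ and $\sigma_{\tau^{-1}_x(y)}=\sigma_y$ for the inverse cases. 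Notice that the hypothesis $x_1\sim x_2$ is not even used: 2-reductivity already collapses the dependence on the left argument.

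For $\backsim$ the plan is entirely symmetric. Given $x_1\backsim x_2$ and $y_1\backsim y_2$, we need $\tau_{\sigma^{\varepsilon}_{x_i}(y_i)}$ and $\tau_{\tau^{\varepsilon}_{x_i}(y_i)}$ to agree for $i=1,2$. Here we use \eqref{eq:red4}, \eqref{eq:red2} and the other two identities of \eqref{eq:more2red}, namely $\tau_{\sigma^{-1}_x(y)}=\tau_y$ and $\tau_{\tau^{-1}_x(y)}=\tau_y$, reducing in each case to $\tau_{y_1}=\tau_{y_2}$, which holds by $y_1\backsim y_2$.

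There is no real obstacle: the statement is essentially immediate from the definitions, the content is just matching each of the eight required congruence conditions (four for $\sim$, four for $\backsim$) to one of the identities \eqref{eq:red1}--\eqref{eq:red4} or \eqref{eq:more2red}. The only thing worth stating clearly in the write-up is that the argument uses all four of the 2-reductivity identities (and their inverse versions), and that the hypothesis on the left coordinate turns out to be redundant.
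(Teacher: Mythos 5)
Your proposal is correct and follows essentially the same route as the paper: both verify the four congruence conditions for $\sim$ by matching each to \eqref{eq:red1}, \eqref{eq:red3} and the inverse variants in \eqref{eq:more2red}, reducing everything to $\sigma_{y_1}=\sigma_{y_2}$, and then treat $\backsim$ symmetrically. Your additional remark that the hypothesis on the left coordinate is never used is accurate and is implicit in the paper's computation as well.
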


\begin{proof}
 Let $x\sim y$ and $a\sim b$. We need to prove four properties:
 \begin{align*}
  \sigma_{\sigma_x(a)}\stackrel{\eqref{eq:red1}}{=} \sigma_a=\sigma_b\stackrel{\eqref{eq:red1}}{=}\sigma_{\sigma_y(b)}&\quad\Rightarrow\quad \sigma_x(a)\sim\sigma_y(b),\\
  \sigma_{\sigma_x^{-1}(a)}\stackrel{\eqref{eq:more2red}}{=} \sigma_a=\sigma_b\stackrel{\eqref{eq:more2red}}{=}\sigma_{\sigma_y^{-1}(b)}&\quad\Rightarrow\quad \sigma_x^{-1}(a)\sim\sigma_y^{-1}(b),\\
  \sigma_{\tau_x(a)}\stackrel{\eqref{eq:red3}}{=} \sigma_a=\sigma_b\stackrel{\eqref{eq:red3}}{=}\sigma_{\tau_y(b)}&\quad\Rightarrow\quad \tau_x(a)\sim\tau_y(b),\\
  \sigma_{\tau_x^{-1}(a)}\stackrel{\eqref{eq:more2red}}{=} \sigma_a=\sigma_b\stackrel{\eqref{eq:more2red}}{=}\sigma_{\tau_y^{-1}(b)}&\quad\Rightarrow\quad \tau^{-1}_x(a)\sim\tau^{-1}_y(b).
 \end{align*}
The proof is analogous for $\backsim$.
\end{proof}

\begin{corollary}\label{lm:sf}
Let $(X,\sigma,\tau)$ be a $2$-reductive solution. Then the retraction solutions $\mathrm{Ret}(X,\sigma,\tau)$,
$\mathrm{LRet}(X,\sigma,\tau)$ and $\mathrm{RRet}(X,\sigma,\tau)$ are projection ones. 
\end{corollary}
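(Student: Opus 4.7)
The plan is to observe that all four defining identities of $2$-reductivity say exactly that translating by $\sigma_x$ or $\tau_x$ keeps an element within its $\sim$-class and within its $\backsim$-class, and hence within its $\approx$-class. The desired conclusions then follow by unpacking the definitions of the three retracts.

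More concretely, I would proceed as follows. Identity \eqref{eq:red1}, $\sigma_{\sigma_x(y)}=\sigma_y$, says $\sigma_x(y)\sim y$; identity \eqref{eq:red3}, $\sigma_{\tau_x(y)}=\sigma_y$, says $\tau_x(y)\sim y$. Together they imply that in $\mathrm{LRet}(X,\sigma,\tau)=(X^{\sim},\sigma,\tau)$ we have
\[
\sigma_{x^{\sim}}(y^{\sim})=\sigma_x(y)^{\sim}=y^{\sim}\quad\text{and}\quad \tau_{y^{\sim}}(x^{\sim})=\tau_y(x)^{\sim}=x^{\sim},
\]
so $\mathrm{LRet}(X,\sigma,\tau)$ is a projection solution. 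The symmetric pair \eqref{eq:red4} and \eqref{eq:red2} give $\sigma_x(y)\backsim y$ and $\tau_x(y)\backsim y$, whence the analogous computation shows $\mathrm{RRet}(X,\sigma,\tau)$ is projection.

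For $\mathrm{Ret}(X,\sigma,\tau)$ one combines both: from the four identities \eqref{eq:red1}--\eqref{eq:red4} we obtain $\sigma_x(y)\approx y$ and $\tau_x(y)\approx y$, and therefore
\[
\sigma_{x^{\approx}}(y^{\approx})=\sigma_x(y)^{\approx}=y^{\approx}\quad\text{and}\quad \tau_{y^{\approx}}(x^{\approx})=\tau_y(x)^{\approx}=x^{\approx},
\]
which is precisely the definition of a projection solution.

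There is essentially no obstacle here: the only things to check are (i) that $\sim$ and $\backsim$ really are congruences on a $2$-reductive solution so that $\mathrm{LRet}$ and $\mathrm{RRet}$ are well-defined (already established in the preceding proposition), and (ii) that the four $2$-reductivity identities read off directly as the statements $\sigma_x(y)\approx y$ and $\tau_x(y)\approx y$. The corollary is then immediate.
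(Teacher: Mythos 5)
Your proof is correct and follows essentially the same route as the paper: reading the four $2$-reductivity identities as $\sigma_x(y)\approx y$ and $\tau_x(y)\approx y$ (resp.\ the $\sim$- and $\backsim$-versions) and unpacking the quotient definitions. The only cosmetic difference is that the paper deduces the statements for $\mathrm{LRet}$ and $\mathrm{RRet}$ by noting they are factors of $\mathrm{Ret}$ (since $\approx$ is the intersection of $\sim$ and $\backsim$), whereas you verify each retract directly from the relevant pair of identities, having, as you correctly note, the congruence property of $\sim$ and $\backsim$ from the preceding proposition.
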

\begin{proof}
Since the solution $(X,\sigma,\tau)$ is $2$-reductive, for $x,y\in X$ we immediately have that 
\[
\sigma_x(y)\approx y\quad {\rm and}\quad \tau_x(y)\approx y.
\]
This means that in $\mathrm{Ret}(X,\sigma,\tau)$, for $x^{\approx}\in X^{\approx}$, $\sigma_{x^{\approx}}=\tau_{x^{\approx}}=\id$. Since the relation $\approx$ is the intersection of $\sim$ and $\backsim$, the left and the right retracts are factors of $\mathrm{Ret}(X,\sigma,\tau)$.
\end{proof}

\begin{lemma}
Let a $2$-reductive solution $(X,\sigma,\tau)$ be  a disjoint union of abelian groups over a set $I$. For $x\in A_i$ and $y\in A_j$ one has $x\sim y$ if and only if $c_{i,k}=c_{j,k}$ and one has $x\backsim y$ if and only if $d_{i,k}=d_{j,k}$ for all $k\in I$.

\end{lemma}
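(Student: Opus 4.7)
The plan is to unpack the definitions of $\sim$ and $\backsim$ directly in terms of the formulas \eqref{eq:7.8} that define $\sigma$ and $\tau$ on a disjoint union of abelian groups, and then cancel in the abelian group structure.

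First I would record the key observation that, in a disjoint union of abelian groups, the permutation $\sigma_x$ depends only on the index $i$ with $x \in A_i$ and not on $x$ itself: indeed, for any $k \in I$ and any $z \in A_k$, formula \eqref{eq:7.8} gives $\sigma_x(z) = z + c_{i,k}$, with no further dependence on $x$. The analogous statement holds for $\tau_x$ via the constants $d_{i,k}$.

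For the first equivalence, take $x \in A_i$ and $y \in A_j$. By definition, $x \sim y$ means $\sigma_x = \sigma_y$, i.e.\ $\sigma_x(z) = \sigma_y(z)$ for every $z \in X$. Splitting $z$ over the orbits, this says $z + c_{i,k} = z + c_{j,k}$ for every $k \in I$ and every $z \in A_k$; since $A_k$ is an abelian group, cancelling $z$ gives $c_{i,k} = c_{j,k}$ for all $k \in I$. Conversely, if $c_{i,k} = c_{j,k}$ for every $k$, then $\sigma_x(z) = z + c_{i,k} = z + c_{j,k} = \sigma_y(z)$ for every $z \in A_k$ and every $k$, so $\sigma_x = \sigma_y$, i.e.\ $x \sim y$.

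The argument for $\backsim$ is verbatim the same after replacing $\sigma$ by $\tau$ and $c_{i,k}$ by $d_{i,k}$, using the second half of \eqref{eq:7.8}. There is no real obstacle here; the only thing to be careful about is to quantify over all indices $k \in I$ (and not just over the orbits containing $x$ or $y$), since $\sigma_x = \sigma_y$ must hold as permutations of the whole set $X = \bigcup_{k\in I} A_k$.
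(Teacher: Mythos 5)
Your proposal is correct and follows essentially the same route as the paper: unpack $\sigma_x=\sigma_y$ via the formula $\sigma_x(z)=z+c_{i,k}$ for $z\in A_k$, cancel $z$ in the abelian group to get $c_{i,k}=c_{j,k}$ for all $k\in I$, and argue symmetrically for $\backsim$ with the constants $d_{i,k}$. Nothing is missing.
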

\begin{proof}
Let $x\in A_i$ and $y\in A_j$. Then
\begin{multline*}
x\sim y 
 \quad\Leftrightarrow\quad \forall k\in I\quad\forall z\in A_k\quad z+c_{i,k}=\sigma_x(z)=\sigma_y(z)=z+c_{j,k}\; 
\quad \Leftrightarrow\quad \forall k\in I\quad c_{i,k}=c_{j,k}
\end{multline*}
and analogously for $\backsim$.
\end{proof}

\section{Skew left braces} \label{sec:skew}
The notion of left braces was introduced by Rump \cite{Rump07A} to investigate involutive solutions. Guarnieri and Vendramin \cite{GV} generalized the idea and defined so called \emph{skew left braces}. These structures provide the most accessible examples of solutions
and it is therefore natural to ask how the $2$-reductivity property translates in the context of skew left braces. In this section we mainly recall
properties of the skew left braces and its associated solutions.

\begin{de}\cite[Definition 1.1]{GV}\label{def:skew_left_brace}
 An algebra $(B,\cdot,\circ)$ is called a {\em skew left brace} if $(B,\cdot)$ and $(B,\circ)$ are groups and the operations satisfy, for all $a,b,c\in B$,
 \begin{equation}\label{lsb}
  a\circ(b\cdot c)=(a\circ b)\cdot a^{-1}\cdot (a\circ c).
 \end{equation}
 \end{de}
The neutral element in the group $(B,\cdot)$ and the neutral element in the group $(B,\circ)$ are equal and we will denote it by $1$.
Moreover, the inverse element to~$a$ in the group $(B,\cdot)$ shall be denoted by $a^{-1}$ and the inverse element to~$a$ in the group $(B,\circ)$  by $\bar a$. 
 
 A skew left brace is a left brace if the group $(B,\cdot)$ is abelian. In this case we say that the skew left brace is of \emph{abelian type}.
 
 \begin{lemma}\cite[Lemma 1.7]{GV}\label{lm:GV1.7}
 Let $(B,\cdot,\circ)$ be a skew left brace. Then the following hold for all $a,b,c\in B$:
 \begin{align}
 &a\circ(b^{-1}\cdot c)=a\cdot(a\circ b)^{-1}\cdot(a\circ c),\label{eq:1.7(2)}\\
 &a\circ(b\cdot c^{-1})=(a\circ b)\cdot(a\circ c)^{-1}\cdot a.\label{eq:1.7(3)}
 \end{align}
 \end{lemma}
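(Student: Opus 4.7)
The plan is to derive both identities from the defining brace axiom \eqref{lsb} by cleverly inserting trivial factors $b^{-1}\cdot b$ or $c\cdot c^{-1}$ so that the product inside $a\circ(-)$ telescopes, and then solving for the desired expression.

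For \eqref{eq:1.7(2)}, I would apply \eqref{lsb} to the product $b\cdot(b^{-1}\cdot c)$. Since $(b)\cdot(b^{-1}\cdot c)=c$ in the group $(B,\cdot)$, the left-hand side becomes $a\circ c$, while the right-hand side reads $(a\circ b)\cdot a^{-1}\cdot (a\circ(b^{-1}\cdot c))$. Left-multiplying this equation in $(B,\cdot)$ by $a\cdot(a\circ b)^{-1}$ isolates $a\circ(b^{-1}\cdot c)$ and gives the desired identity.

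For \eqref{eq:1.7(3)}, the symmetric trick is to factor $b$ on the right as $(b\cdot c^{-1})\cdot c$. Applying \eqref{lsb} to $a\circ((b\cdot c^{-1})\cdot c)$ produces $a\circ b$ on the left and $(a\circ(b\cdot c^{-1}))\cdot a^{-1}\cdot (a\circ c)$ on the right, after which right-multiplying by $(a\circ c)^{-1}\cdot a$ solves for $a\circ(b\cdot c^{-1})$.

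There is no real obstacle here: both identities are one-line consequences of \eqref{lsb} together with group cancellation in $(B,\cdot)$. The only thing to watch is that $(B,\cdot)$ need not be abelian, so the order of multiplication must be preserved carefully throughout; in particular one must keep the factor $a^{-1}$ between $(a\circ b)$ and $(a\circ c)$ as dictated by \eqref{lsb}, and perform cancellations from the correct side when rearranging.
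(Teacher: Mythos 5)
Your proof is correct: both substitutions into \eqref{lsb} (replacing the pair $(b,c)$ by $(b,b^{-1}\cdot c)$ and by $(b\cdot c^{-1},c)$, respectively) followed by one-sided cancellation in $(B,\cdot)$ give exactly \eqref{eq:1.7(2)} and \eqref{eq:1.7(3)}. The paper does not prove this lemma but cites \cite[Lemma 1.7]{GV}, and your argument is the same standard derivation used there, so there is nothing to add.
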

 
A skew left brace is said to be a \emph{two-sided skew brace} if 
\begin{align}\label{eq:twosided}
(a\cdot b)\circ c=(a\circ c)\cdot c^{-1}\cdot (b\circ c),
\end{align}
holds for all $a,b,c\in B$.
For any skew left brace $(B,\cdot,\circ)$ there is so called \emph{opposite skew left brace} $(B,\cdot_{op},\circ)$, with $a\cdot_{op}b=b\cdot a$ (see \cite{KT}). 

\begin{remark}\cite[Remark 1.8, Proposition 1.9]{GV}\label{prop:lambda-hom}
Let $(B,\cdot,\circ)$ be a skew left brace. For each $a\in B$, the mapping
\[
\lambda_a\colon B\to B; \quad \lambda_a(b)=a^{-1}\cdot(a\circ b),
\]
is an automorphism of the group $(B,\cdot)$  with the inverse defined by $\lambda_a^{-1}(b)=\bar a\circ(a\cdot b)$.
\end{remark}

Similarly as for a left brace, one can define the \emph{associated solution} to a skew left brace.
\begin{theorem} \cite[Theorem 3.1]{GV}\label{Th:GV}
Let $(B,\cdot,\circ)$ be a skew left  
brace. Then $(B,\lambda,\rho)$, with 
\begin{align*}
&\rho_y(x):=\lambda^{-1}_{\lambda_x(y)}((x\circ y)^{-1}\cdot x\cdot(x\circ y))=\overline{x^{-1}\cdot (x\circ y)}\circ x\circ y=\overline{\lambda_x(y)}\circ x\circ y,\quad for\; x,y\in B,
\end{align*}
is a solution. It is involutive if and only 
the skew left brace $(B,\cdot,\circ)$ is of abelian type.
\end{theorem}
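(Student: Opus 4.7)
\emph{Proof plan.} The plan is to establish two homomorphism-type properties of $\lambda$ and $\rho$, derive from them the factorization $x\circ y=\lambda_x(y)\circ\rho_y(x)$, and then use these ingredients to check non-degeneracy, the braid relation, and the involutivity criterion.

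First, I would prove that $\lambda\colon(B,\circ)\to\Aut(B,\cdot)$, $x\mapsto\lambda_x$, is a group homomorphism, i.e.\ $\lambda_{x\circ y}=\lambda_x\lambda_y$. This follows by unfolding $\lambda_{x\circ y}(z)=(x\circ y)^{-1}\cdot((x\circ y)\circ z)$ and applying $\circ$-associativity together with the brace axiom \eqref{lsb}. The factorization $x\circ y=\lambda_x(y)\circ\rho_y(x)$ is immediate from the definition of $\rho$, and combined with $x\circ y=x\cdot\lambda_x(y)$ says that $r$ acts as a coordinate change preserving the $\circ$-product. Non-degeneracy of $\lambda_x$ is the preceding Remark, and bijectivity of~$r$ (hence non-degeneracy of $\rho_y$) follows by exhibiting the inverse $r^{-1}(u,v)=\bigl((u\circ v)\cdot u^{-1},\,\overline{(u\circ v)\cdot u^{-1}}\circ(u\circ v)\bigr)$ and verifying directly that it inverts~$r$.

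The main obstacle is to derive the right-sided dual $\rho_{y\circ z}=\rho_z\rho_y$. Unfolding both sides via the definition of $\rho$, this reduces to the identity $\lambda_x(y\circ z)=\lambda_x(y)\circ\lambda_{\rho_y(x)}(z)$. The latter is established by first computing $\lambda_x(y\circ z)=\lambda_x(y\cdot\lambda_y(z))=\lambda_x(y)\cdot\lambda_{x\circ y}(z)$ through the distributivity of $\lambda_x$ and the homomorphism property, then rewriting $x\circ y=\lambda_x(y)\circ\rho_y(x)$ on the subscript to obtain $\lambda_x(y)\cdot\lambda_{\lambda_x(y)}(\lambda_{\rho_y(x)}(z))=\lambda_x(y)\circ\lambda_{\rho_y(x)}(z)$. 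This interplay between the two group operations and the factorization is the most delicate point.

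With these ingredients, the braid relation splits into \eqref{birack:1}--\eqref{birack:3}. For \eqref{birack:1}, the homomorphism property yields $\lambda_{\lambda_x(y)}\lambda_{\rho_y(x)}=\lambda_{\lambda_x(y)\circ\rho_y(x)}=\lambda_{x\circ y}=\lambda_x\lambda_y$; identity \eqref{birack:3} follows symmetrically from $\rho_{y\circ z}=\rho_z\rho_y$. For the mixed identity \eqref{birack:2}, I would apply $r_{12}r_{23}r_{12}$ and $r_{23}r_{12}r_{23}$ to $(x,y,z)$ and observe that iterated use of the factorization makes the $\circ$-product of the three output entries equal to $x\circ y\circ z$ on both sides; since \eqref{birack:1} already gives equal first entries and \eqref{birack:3} equal third entries, the middle entries are forced to agree via $b=\bar a\circ(a\circ b\circ c)\circ\bar c$. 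Finally, for involutivity, set $u=\lambda_x(y)$ and $v=\rho_y(x)$, so that $u\circ v=x\circ y$ by the factorization; then $\lambda_u(v)=u^{-1}\cdot(u\circ v)=u^{-1}\cdot x\cdot\lambda_x(y)=u^{-1}\cdot x\cdot u$. The condition $\lambda_u(v)=x$ for all $x,y$ therefore amounts to $x$ commuting in $(B,\cdot)$ with $u=\lambda_x(y)$; since $\lambda_x$ is an automorphism and $y$ is arbitrary, this is equivalent to $(B,\cdot)$ being abelian, and a symmetric computation of $\rho_v(u)$ yields the remaining coordinate equality.
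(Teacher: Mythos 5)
The paper itself gives no proof of this theorem (it is quoted from [GV, Theorem 3.1]), so your proposal can only be measured against the standard argument of Guarnieri--Vendramin, and in its main lines it reproduces it: $\lambda$ is a $\circ$-homomorphism into $\Aut(B,\cdot)$, the factorization $x\circ y=\lambda_x(y)\circ\rho_y(x)$, and the key identity $\lambda_x(y\circ z)=\lambda_x(y)\circ\lambda_{\rho_y(x)}(z)$, which correctly reduces to $\rho_{y\circ z}=\rho_z\rho_y$; your derivation of that identity via $\lambda_x(y\circ z)=\lambda_x(y)\cdot\lambda_{x\circ y}(z)$ and $\lambda_{x\circ y}=\lambda_{\lambda_x(y)}\lambda_{\rho_y(x)}$ checks out. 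Your treatment of the braid relation is also correct and rather elegant: \eqref{birack:1} and \eqref{birack:3} come straight from the (anti)homomorphism properties, and the mixed identity \eqref{birack:2} follows because each application of $r$ preserves the $\circ$-product, so both triple composites have first and third entries equal and the same total product $x\circ y\circ z$, forcing the middle entries to agree by cancellation in $(B,\circ)$; this replaces the direct computation of \eqref{birack:2} in [GV] by a cleaner structural argument. The involutivity computation $\lambda_u(v)=u^{-1}\cdot x\cdot u$ with $u=\lambda_x(y)$ ranging over all of $B$ is also correct, and the second coordinate is even automatic from product preservation once the first is the identity.

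The one genuine flaw is the parenthetical ``bijectivity of~$r$ (hence non-degeneracy of $\rho_y$)''. Bijectivity of $r$ together with bijectivity of all $\lambda_x$ does \emph{not} imply that each $\rho_y$ is bijective: on a two-element set one can choose bijective $\sigma_x$'s and non-bijective $\tau_y$'s so that $(x,y)\mapsto(\sigma_x(y),\tau_y(x))$ is still a bijection of $X^2$, so the inference is invalid as stated, and your explicit formula for $r^{-1}$ does not by itself single out an inverse of $\rho_y$ for fixed $y$. The gap is easily repaired inside your own framework: since you prove $\rho_{y\circ z}=\rho_z\rho_y$ and $\rho_1=\id$ (because $\lambda_x(1)=1$), you get $\rho_{\bar y}\rho_y=\rho_{y\circ\bar y}=\id=\rho_y\rho_{\bar y}$, so $\rho_y^{-1}=\rho_{\bar y}$ and non-degeneracy follows. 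You should state non-degeneracy of $\rho$ as a consequence of the anti-homomorphism property (or exhibit $\rho_{\bar y}$ as the inverse directly), not of the bijectivity of $r$.
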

\noindent

Note that solutions associated to skew left braces satisfy the conditions \eqref{prop:star:1}--\eqref{prop:star:2}.

\smallskip

By results of Koch and Truman~\cite{KT}, for a skew left brace $(B,\cdot,\circ)$,
the solution associated to the opposite skew left brace is the inverse solution to the solution associated to $(B,\cdot,\circ)$.

\begin{theorem}\cite[Theorem 4.1]{KT}\label{thm:Koch}
Let $(B,\lambda,\rho)$ be the solution associated to a skew left brace $(B,\cdot,\circ)$. Then its inverse solution $(B,\hat\lambda,\hat\rho)$ is the solution associated to the opposite skew left brace $(B,\cdot_{op},\circ)$. 

In particular, for $x,y\in B$
\begin{align}
\hat\lambda_x(y)&=(x\circ y)\cdot x^{-1}, \quad {\rm and}\\
\hat\rho_y(x)&=\overline{(x\circ y)\cdot x^{-1}}\circ x\circ y.
\end{align}
\end{theorem}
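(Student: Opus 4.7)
The statement has two assertions bundled together: first, that the opposite structure $(B,\cdot_{op},\circ)$ is itself a skew left brace, so that its associated solution is well-defined; second, that this associated solution is precisely the inverse of the original one, with the explicit formulas displayed.

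My plan is to verify the formulas \emph{first} as a definition and then check they invert $r$; the fact that $(B,\cdot_{op},\circ)$ is a skew left brace will follow from a direct manipulation of~\eqref{lsb} (using that $a^{-1_{op}}=a^{-1}$ since $a\cdot_{op}a^{-1}=a^{-1}\cdot a=1$). Specifically, replacing $\cdot$ by $\cdot_{op}$ in \eqref{lsb} becomes an identity of the form $a\circ(c\cdot b)=(a\circ c)\cdot a^{-1}\cdot(a\circ b)$, which is equivalent to~\eqref{lsb} after swapping the roles of $b$ and $c$. Thus $(B,\cdot_{op},\circ)$ is a skew left brace. Writing out the associated solution from Theorem~\ref{Th:GV} with $\cdot_{op}$ in place of $\cdot$ gives $\lambda^{op}_x(y)=x^{-1}\cdot_{op}(x\circ y)=(x\circ y)\cdot x^{-1}$, matching the formula for $\hat\lambda_x$; the formula for $\hat\rho_y$ follows by the same substitution into the definition of $\rho$.

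For the inversion, I would introduce the abbreviations $a=\hat\lambda_x(y)=(x\circ y)\cdot x^{-1}$ and $b=\hat\rho_y(x)=\bar a\circ x\circ y$. The pivotal observation is that $a\circ b=a\circ\bar a\circ x\circ y=x\circ y$, so the $\circ$-product is preserved by $(x,y)\mapsto(a,b)$. Then a direct substitution yields
\begin{align*}
\lambda_a(b)&=a^{-1}\cdot(a\circ b)=\bigl((x\circ y)\cdot x^{-1}\bigr)^{-1}\cdot(x\circ y)=x,\\
\rho_b(a)&=\overline{\lambda_a(b)}\circ a\circ b=\bar x\circ x\circ y=y,
\end{align*}
showing $r\circ(\hat\lambda,\hat\rho)=\id_{B^2}$. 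The opposite composition $(\hat\lambda,\hat\rho)\circ r=\id_{B^2}$ is the mirror computation: setting $c=\lambda_x(y)$, $d=\rho_y(x)$ one again has $c\circ d=x\circ y$, and $(c\circ d)\cdot c^{-1}=(x\circ y)\cdot(x\circ y)^{-1}\cdot x=x$, while $\overline{(c\circ d)\cdot c^{-1}}\circ c\circ d=\bar x\circ x\circ y=y$.

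I expect the only delicate point to be bookkeeping the two inverses $a^{-1}$ and $\bar a$ and not confusing them; the main structural insight that drives everything is the preservation of the $\circ$-product under both $r$ and $(\hat\lambda,\hat\rho)$, which reduces the two verifications to one-line cancellations. Verifying that $(B,\cdot_{op},\circ)$ satisfies \eqref{lsb} is routine but should be stated explicitly, since without it the right-hand side of the theorem would not even be a legitimate associated solution.
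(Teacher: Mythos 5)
Your proof is correct. The paper offers no argument for this statement --- it is quoted from \cite[Theorem 4.1]{KT} --- so there is no in-paper proof to compare with; your self-contained verification (the opposite operation satisfies \eqref{lsb} with the roles of $b$ and $c$ interchanged, the displayed formulas are precisely the $\lambda$- and $\rho$-maps of $(B,\cdot_{op},\circ)$ read off from Theorem~\ref{Th:GV}, and both composites with $r$ collapse to the identity because $x\circ y$ is preserved) is complete and is essentially the direct computation of Koch and Truman. The few steps you leave implicit --- that $(B,\cdot_{op})$ is a group whose inversion coincides with that of $(B,\cdot)$, and the cancellation $\bigl((x\circ y)\cdot x^{-1}\bigr)^{-1}\cdot(x\circ y)=x$ --- are immediate, so no gap remains.
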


\begin{example}\label{exm:trisol}
Let $(B,\cdot)$ be a group. Then $Triv(B,\cdot)=(B,\cdot,\cdot)$ is the \emph{trivial skew left brace} on $(B,\cdot)$. The associated solution $(B,\lambda,\rho)$ is of the form 
: $\lambda_a(b)=b$ and $\rho_b(a)=b^{-1}\cdot a\cdot b$.
\end{example}

\begin{example}\label{exm:almtrisol}
Let $(B,\cdot)$ be a group.
The associated solution to an \emph{almost trivial} skew left brace $(B,\cdot,\cdot_{op})$ is:
$\lambda_a(b)=a^{-1}\cdot b \cdot a$ and $\rho_b(a)=a$. Moreover, $\hat{\lambda}_a(b)=b$ and $\hat{\rho}_b(a)=b\cdot a\cdot b^{-1}$.\\
In particular, if a group $(B,\cdot)$ is abelian, then $(B,\lambda,\rho)$ is a projection solution.
\end{example}

\begin{remark}
Let~$(B,\cdot,\cdot_{op})$ be an almost trivial skew left brace and let $(B,\lambda,\rho)$ its associated solution. Then \eqref{eq:red2}, \eqref{eq:red3} and \eqref{eq:red4} are trivially satisfied and \eqref{eq:red1} is equivalent to
\[\lambda_{\lambda_x(y)}(z)=\lambda_y(z) \quad\Leftrightarrow\quad x^{-1}y^{-1}xzx^{-1}yx=y^{-1}zy \quad\Leftrightarrow\quad [[x,y],z]=1\]
and therefore the solution is $2$-reductive if and only if the group $(B,\cdot)$ is nilpotent of class~$2$.
\end{remark}

\begin{example}\label{exm:prodsol}
Let $(G,\cdot)$ be a non-abelian group. The solution $(G\times G,\lambda,\rho)$ associated with the product $(G,\cdot,\cdot)\times (G,\cdot,\cdot_{opp})$ of trivial and almost trivial skew left braces is, for $(x,y),(u,w)\in G\times G$ the following:
\begin{align*}
\lambda_{(x,y)}((u,w))=(u,y^{-1}\cdot w\cdot y)
\quad {\rm and}\quad \rho_{(u,w)}((x,y))=(u^{-1}\cdot x\cdot u,y).
\end{align*}
\end{example}
\begin{example}\label{exm:Z2n}
Let $n\in \mathbb{N}$ be an odd natural number and let $(\Z_{2n},\cdot,+_{2n})$ be a skew left brace such that for $x,y\in \Z_{2n}$
\begin{align*}
&x\cdot y=x+(-1)^{x}y\mod 2n\quad {\rm and}\quad 
x^{-1}=(-1)^{x+1}x\mod 2n.
\end{align*}
The solution associated with this skew left brace is the following:
\begin{align*}
\lambda_x(y)=\lambda_x^{-1}(y)=(-1)^xy\mod 2n\quad {\rm and}\quad \rho_x(y)=(-1)^{y+1}x+x+y\mod 2n.
\end{align*}
\end{example}

All these examples above were actually examples of bi-skew left braces.

\begin{de}\cite[Definition 2]{Chi}\label{def:SLB}
A skew left brace $(B,\cdot,\circ)$ is a \emph{bi-skew left brace} if $(B,\circ,\cdot)$ is a skew left brace as well.
\end{de}
Bi-skew left braces were introduced by Childs in \cite{Chi}, studied by Caranti in \cite{C} and also recently by Bardakov et al.  in \cite{BNY} under the name \emph{symmetric} skew left braces. Caranti (see also \cite[Proposition 5.2]{BNY}) presented the following equivalence. 
\begin{lemma}\cite[Lemma 3.1]{C}\label{lem:Car}
Let $(B,\cdot,\circ)$ be a skew left brace. 
The following conditions are equivalent:
\begin{enumerate}
\item [(i)]  $(B,\cdot,\circ)$ is a bi-skew left brace,
\item[(ii)]$
\lambda\colon B\to B; \quad a\mapsto \lambda_a
$
is an anti-homomorphism of groups $(B,\cdot)$ and $\aut{B,\cdot}$ that means $\lambda_{a\cdot b}=\lambda_b\lambda_a$, for all $a,b\in B$.
\end{enumerate}

\end{lemma}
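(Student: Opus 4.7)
The plan is to observe that, since both $(B,\cdot)$ and $(B,\circ)$ are already groups, $(B,\cdot,\circ)$ being a bi-skew left brace reduces to a single identity, namely the skew left brace axiom applied to $(B,\circ,\cdot)$ (with $\circ$ playing the role of the ``additive'' operation):
\begin{equation*}
 a\cdot(b\circ c)=(a\cdot b)\circ\bar a\circ(a\cdot c),\qquad a,b,c\in B.
\end{equation*}
I would then rewrite both sides using the defining relation $a\circ b=a\cdot\lambda_a(b)$ and reduce the identity to the equation $\lambda_{ab}=\lambda_b\lambda_a$ of (ii).

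The LHS expands directly to $ab\cdot\lambda_b(c)$. For the RHS, the key simplification is the intermediate computation
\begin{equation*}
 \bar a\circ(a\cdot c)=\bar a\cdot\lambda_{\bar a}(ac)=\bar a\cdot\lambda_{\bar a}(a)\cdot\lambda_{\bar a}(c)=\lambda_{\bar a}(c),
\end{equation*}
where I use that $\lambda_{\bar a}$ is an automorphism of $(B,\cdot)$ together with the identity $\lambda_{\bar a}(a)=\bar a^{-1}(\bar a\circ a)=\bar a^{-1}$. Applying the same rewriting once more, $(ab)\circ\lambda_{\bar a}(c)=ab\cdot\lambda_{ab}\lambda_{\bar a}(c)$. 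After cancelling $ab$, the bi-brace axiom becomes $\lambda_b(c)=\lambda_{ab}\lambda_{\bar a}(c)$; since this must hold for every $c\in B$, it amounts to $\lambda_{ab}=\lambda_b\,\lambda_{\bar a}^{-1}$. The standard fact that $\lambda\colon(B,\circ)\to\aut{B,\cdot}$ is a group homomorphism (immediate from the skew brace axiom $(\star)$) yields $\lambda_{\bar a}=\lambda_a^{-1}$, so the identity becomes precisely $\lambda_{ab}=\lambda_b\lambda_a$, i.e., (ii).

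All the steps in the reduction are genuine equivalences, which gives (i) $\Leftrightarrow$ (ii). There is no deep obstacle; the only thing one has to keep straight is the bookkeeping of which group's inverse appears at each point (the multiplicative $a^{-1}$ versus the circle-inverse $\bar a$), and the two small observations that drive the proof are the collapse $\lambda_{\bar a}(a)=\bar a^{-1}$ and the homomorphism property $\lambda_{\bar a}=\lambda_a^{-1}$.
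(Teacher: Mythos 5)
Your proof is correct: expanding the skew-brace axiom for $(B,\circ,\cdot)$ via $x\circ y=x\cdot\lambda_x(y)$, using $\bar a\circ(a\cdot c)=\lambda_a^{-1}(c)$ and cancelling $a\cdot b$, does reduce the bi-skew condition exactly to $\lambda_{a\cdot b}=\lambda_b\lambda_a$, with every step reversible. The paper itself gives no proof (it cites Caranti's Lemma 3.1), and your argument is essentially the standard computation behind that reference, so there is nothing further to compare.
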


Bi-skew left braces and their associated solutions were also studied by  
 Stefanello and Trappeniers in \cite{ST} who gave the following characterization:
\begin{thm}\cite[Theorem 5.2]{ST}\label{th:sbb}
Let $(B,\cdot,\circ)$ be a skew left brace. 
Then $(B,\cdot,\circ)$ is a bi-skew left brace if and only if its associated solution $(B,\lambda,\rho)$ satisfies
\begin{align}\label{eq:solbi} 
\lambda_{\hat{\lambda}_x(y)}=\lambda_y,\quad for\; all \;x,y\in B.
\end{align}
\end{thm}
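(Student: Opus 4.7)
The plan is to reduce the theorem to a direct calculation using three ingredients that are already available in the excerpt: first, Koch--Truman's formula (Theorem~\ref{thm:Koch}) that $\hat\lambda_x(y)=(x\circ y)\cdot x^{-1}$; second, Caranti's Lemma~\ref{lem:Car} characterising bi-skew left braces as those skew left braces for which $\lambda\colon (B,\cdot)\to \aut{B,\cdot}$ is an anti-homomorphism, i.e.\ $\lambda_{a\cdot b}=\lambda_b\lambda_a$; and third, the standard fact (implicit in the construction of the associated solution, see Remark~\ref{prop:lambda-hom} and Theorem~\ref{Th:GV}) that $\lambda\colon (B,\circ)\to \aut{B,\cdot}$ is already a group homomorphism, so $\lambda_{x\circ y}=\lambda_x\lambda_y$ and in particular $\lambda_{\bar x}=\lambda_x^{-1}$.

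For the forward implication, assume $(B,\cdot,\circ)$ is a bi-skew left brace. Using Theorem~\ref{thm:Koch} and then the anti-homomorphism property from Lemma~\ref{lem:Car},
\[
\lambda_{\hat\lambda_x(y)}=\lambda_{(x\circ y)\cdot x^{-1}}=\lambda_{x^{-1}}\lambda_{x\circ y}.
\]
The anti-homomorphism property applied to $x^{-1}\cdot x=1$ yields $\lambda_{x^{-1}}=\lambda_x^{-1}$, and combining this with $\lambda_{x\circ y}=\lambda_x\lambda_y$ gives $\lambda_{\hat\lambda_x(y)}=\lambda_x^{-1}\lambda_x\lambda_y=\lambda_y$, as required.

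For the converse, assume $\lambda_{\hat\lambda_x(y)}=\lambda_y$ for all $x,y\in B$. Substituting the Koch--Truman formula and rewriting $\lambda_y$ as $\lambda_{\bar x\circ(x\circ y)}=\lambda_{\bar x}\lambda_{x\circ y}=\lambda_x^{-1}\lambda_{x\circ y}$, we obtain
\[
\lambda_{(x\circ y)\cdot x^{-1}}=\lambda_x^{-1}\lambda_{x\circ y}.
\]
Setting $z:=x\circ y$ (which ranges over all of $B$ as $y$ does) gives $\lambda_{z\cdot x^{-1}}=\lambda_x^{-1}\lambda_z$ for all $x,z\in B$. Replacing $z$ by $z\cdot x$ transforms this into $\lambda_{z\cdot x}=\lambda_x\lambda_z$, which is precisely the anti-homomorphism identity of Lemma~\ref{lem:Car}. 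Hence $(B,\cdot,\circ)$ is a bi-skew left brace.

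I do not expect a serious obstacle: the only slightly delicate step is the bookkeeping in the converse, where one must remember that $y\mapsto x\circ y$ is a bijection in order to upgrade the identity from ``all $x,y$'' to ``all $x,z$'' and then perform the substitution $z\mapsto z\cdot x$ to obtain the multiplicative anti-homomorphism identity rather than a mixed $(\cdot,\circ)$ identity.
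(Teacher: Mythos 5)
Your proof is correct. Note that the paper does not prove this statement at all: it is quoted verbatim from Stefanello--Trappeniers \cite[Theorem 5.2]{ST}, so there is no internal proof to compare with, and your argument serves as a legitimate self-contained verification from ingredients the paper does state. Both directions check out: in the forward direction you correctly obtain $\lambda_{x^{-1}}=\lambda_x^{-1}$ from $\lambda_1=\id$ together with Caranti's anti-homomorphism identity, and in the converse the bijectivity of $y\mapsto x\circ y$ and the substitution $z\mapsto z\cdot x$ legitimately upgrade $\lambda_{z\cdot x^{-1}}=\lambda_x^{-1}\lambda_z$ to $\lambda_{z\cdot x}=\lambda_x\lambda_z$, which is exactly the condition of Lemma~\ref{lem:Car}; it is good that you avoided the tempting shortcut of substituting $x\mapsto x^{-1}$, which would have required $\lambda_{x^{-1}}=\lambda_x^{-1}$ before the anti-homomorphism property is available. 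For comparison, the same equivalence can also be assembled from results proved later in the paper: Proposition~\ref{prop:blr} shows that \eqref{eq:solbi} is equivalent to $\lambda_{\rho_x(y)}=\lambda_y$, and the direct computation in Proposition~\ref{prop:4ekv2red}(iii) shows that this holds if and only if $\lambda$ is an anti-homomorphism of $(B,\cdot)$ into $\aut{B,\cdot}$, which by Lemma~\ref{lem:Car} is the bi-skew condition. That route works at the level of the identities \eqref{rr:1}--\eqref{rr:4} for general solutions, whereas yours is more direct, using only the Koch--Truman formula for $\hat\lambda$ (Theorem~\ref{thm:Koch}) and the homomorphism property of $\lambda$ on $(B,\circ)$ (Proposition~\ref{prop:lambda_hom}); either is a perfectly acceptable proof.
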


This property is visually similar to \eqref{eq:red1} 
but it is actually equivalent to
a different identity defining 2-reductivity.

\begin{proposition}\label{prop:blr}
Let $(X,\sigma,\tau)$ be a solution. Then $(X,\sigma,\tau)$ satisfies  $\sigma_{\hat\sigma_x(y)}=\sigma_y$, for $x,y\in X$, if and only if it satisfies \eqref{eq:red3}.
\end{proposition}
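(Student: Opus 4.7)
The plan is to exploit the inverse relations \eqref{rr:2} and \eqref{rr:3} so as to rewrite each of the two identities as an instance of the other, using a simple substitution principle as an intermediate lemma.

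First I would record the following observation: for any identity of the form $\sigma_{f_x(y)}=\sigma_y$ that holds for all $x,y$ with $f_x\colon X\to X$ a bijection, the same identity automatically holds with $f_x$ replaced by $f_x^{-1}$. Indeed, given $u\in X$, set $y:=f_x^{-1}(u)$; then $\sigma_{f_x^{-1}(u)}=\sigma_y=\sigma_{f_x(y)}=\sigma_u$. In particular, $\sigma_{\hat\sigma_x(y)}=\sigma_y$ for all $x,y$ implies $\sigma_{\hat\sigma_x^{-1}(y)}=\sigma_y$, and \eqref{eq:red3} implies $\sigma_{\tau_x^{-1}(y)}=\sigma_y$.

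For the implication ``$\sigma_{\hat\sigma_x(y)}=\sigma_y \Rightarrow \eqref{eq:red3}$'' I would use the rewriting $\tau_y(x)=\hat\sigma_{\sigma_x(y)}^{-1}(x)$ from \eqref{rr:3}. Applying the observation,
\[
\sigma_{\tau_y(x)}=\sigma_{\hat\sigma_{\sigma_x(y)}^{-1}(x)}=\sigma_x,
\]
which is \eqref{eq:red3} after swapping the names of $x$ and $y$. For the converse, I would invoke the rewriting $\hat\sigma_x(y)=\tau_{\hat\tau_y(x)}^{-1}(y)$ from \eqref{rr:2}. Combined with $\sigma_{\tau_u^{-1}(v)}=\sigma_v$ (from \eqref{eq:red3} via the observation), this gives
\[
\sigma_{\hat\sigma_x(y)}=\sigma_{\tau_{\hat\tau_y(x)}^{-1}(y)}=\sigma_y,
\]
as required.

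The only real content is choosing the right substitution from \eqref{rr:2} and \eqref{rr:3}; the main pitfall I anticipate is keeping track of which subscript becomes the ``parameter'' and which becomes the argument when passing between $\sigma$ and $\hat\sigma$ (or between $\tau$ and $\hat\tau$). Once the correct expressions for $\tau_y(x)$ and $\hat\sigma_x(y)$ are chosen, the argument reduces to the one-line substitution observation and no further braid-relation manipulation is needed.
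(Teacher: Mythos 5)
Your proposal is correct and follows essentially the same route as the paper: both directions are one-line substitutions using the relations between $r$ and $r^{-1}$, your first implication being the paper's computation via \eqref{rr:3} in inverse form, and your "replace $f_x$ by $f_x^{-1}$" observation just formalizing the paper's final step of substituting $x\mapsto\hat\sigma_y(x)$. The only cosmetic difference is that for the converse you substitute $\hat\sigma_x(y)=\tau^{-1}_{\hat\tau_y(x)}(y)$ from \eqref{rr:2}, whereas the paper uses the last formula of \eqref{rr:3}; both are equally valid.
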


\begin{proof}
Let $(X,\sigma,\tau)$ satisfy~\eqref{eq:solbi}. Then, for $x,y\in X$, we obtain 
\begin{align*}
\sigma_x\stackrel{\eqref{rr:3}}=\sigma_{\hat{\sigma}_{\sigma_x(y)}\tau_y(x)}\stackrel{\eqref{eq:solbi}}= \sigma_{\tau_y(x)}.
\end{align*}
On the other hand, for \eqref{eq:red3}, we have
\begin{align*}
\sigma_{\hat{\sigma}^{-1}_y(x)}\stackrel{\eqref{rr:3}}=\sigma_{\tau_{\sigma^{-1}_x(y)}(x)}\stackrel{\eqref{eq:red3}}=\sigma_x,
\end{align*}
and replacing $x$ by $\hat\sigma_y(x)$ completes the proof.
\end{proof}

\begin{corollary}
Each $2$-reductive solution satisfies \eqref{eq:solbi} and an involutive solution satisfying \eqref{eq:solbi} is $2$-reductive.
\end{corollary}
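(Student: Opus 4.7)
The plan is to derive this corollary as a direct consequence of Proposition~\ref{prop:blr} together with the structural remarks already made about involutive 2-reductive solutions, so it requires essentially no new computation.

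For the first claim, I would observe that if $(X,\sigma,\tau)$ is 2-reductive, then in particular identity \eqref{eq:red3} holds. By Proposition~\ref{prop:blr}, \eqref{eq:red3} is equivalent to the identity $\sigma_{\hat\sigma_x(y)}=\sigma_y$, which is exactly \eqref{eq:solbi} (translated from $\lambda$ to $\sigma$). So \eqref{eq:solbi} is automatic.

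For the converse in the involutive case, I would exploit two facts. First, in an involutive solution one has $\hat\sigma_x=\sigma_x$ (noted right after \eqref{rr:4}), so \eqref{eq:solbi} collapses to $\sigma_{\sigma_x(y)}=\sigma_y$, which is \eqref{eq:red1}. Second, the paper already records (in the remark immediately after display \eqref{eq:more2red}) that for involutive solutions the four identities \eqref{eq:red1}--\eqref{eq:red4}, together with the auxiliary identities \eqref{eq:more2red}, are all equivalent to \eqref{eq:red1} alone. Combining these two observations gives full 2-reductivity.

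There is no genuine obstacle; the only thing to be careful about is to invoke the right equivalence for involutive solutions, i.e. to cite (or briefly reprove, using $\hat\sigma=\sigma$, $\hat\tau=\tau$, and \eqref{rr:1}--\eqref{rr:4}) the fact that in the involutive case \eqref{eq:red1} forces \eqref{eq:red2}--\eqref{eq:red4}. Beyond that, the statement is a two-line corollary of Proposition~\ref{prop:blr}.
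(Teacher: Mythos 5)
Your proposal is correct and follows essentially the same route the paper intends: the paper states this corollary without proof, as an immediate consequence of Proposition~\ref{prop:blr} (which identifies \eqref{eq:solbi} with \eqref{eq:red3}) together with the earlier remark that \eqref{eq:red1}--\eqref{eq:more2red} are all equivalent for involutive solutions. Your small variation of collapsing \eqref{eq:solbi} to \eqref{eq:red1} via $\hat\sigma_x=\sigma_x$ instead of passing through \eqref{eq:red3} is harmless and does not constitute a different argument.
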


Analogously we can say that the identity
\begin{align}
\tau_{\hat{\tau}_x(y)}=\tau_y.\label{eq:solbir}
\end{align}
is equivalent to the identity~\eqref{eq:red4}.

\begin{example}\cite[Example 2.6]{JPZ20}
Let $(G,\cdot)$ be a non elementary abelian $2$-group such that for each $x\in G$, $x^2\in Z(G)$. Let us define a solution $(G,\sigma,\tau)$ as follows:
\begin{align*}
&\sigma_x(y)=xy^{-1}x^{-1}\quad {\rm and}\quad 
\tau_y(x)=xy^{-1}.
\end{align*}
This solution, with $\hat{\sigma}_x(y)=y^3$ and 
$\hat{\tau}_y(x)=x^{-1}y^{-1}x$, satisfies \eqref{eq:solbi} but not \eqref{eq:solbir}.
\end{example}

Let $(B,\cdot,\circ)$ be a skew left brace. 
\begin{de}
A subset $I\subseteq B$ is an \emph{ideal} of $(B,\cdot,\circ)$ if it is a normal subgroup of $(B,\cdot)$, a normal subgroup of $(B,\circ)$ and  $\lambda_a(I)\subseteq I$, for all $a\in B$. 
\end{de}
\begin{de}
The \emph{socle} of $(B,\cdot,\circ)$ is the ideal
\[
\Soc(B)=\{a\in B\colon a\circ b=a\cdot b
=b\cdot a,\; 
{\rm for\; all}\; b\in B\}=\ker{\lambda}\cap Z(B,\cdot),
\]
where $Z(B,\cdot)$ denotes the center of $(B,\cdot)$.
\end{de}

\begin{example}
For an almost trivial skew left brace $(B,\cdot,\cdot_{op})$, $\Soc(B)=Z(B,\cdot)$.
\end{example}

\begin{lemma}\cite[Lemma 3.3]{GV}\label{lem:cosets}
For a skew left brace $(B,\cdot,\circ)$ and an ideal $I\subseteq B$, for all $a\in B$, $a\cdot I=a\circ I$.
\end{lemma}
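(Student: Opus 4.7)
The plan is to pivot on the identity $a\circ b = a\cdot \lambda_a(b)$, which comes straight from the definition $\lambda_a(b)=a^{-1}\cdot(a\circ b)$ in Remark~\ref{prop:lambda-hom}. Applying this to every element of $I$ rewrites the right coset in the $\circ$-group as
\begin{equation*}
a\circ I \;=\; \{a\circ i : i\in I\} \;=\; \{a\cdot \lambda_a(i) : i\in I\} \;=\; a\cdot \lambda_a(I).
\end{equation*}
So the lemma will reduce to verifying the set equality $\lambda_a(I)=I$.

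For one inclusion, the definition of an ideal gives directly $\lambda_a(I)\subseteq I$. For the reverse inclusion, I would use that $\lambda\colon (B,\circ)\to \aut{B,\cdot}$ is a group homomorphism (this is the content of Proposition~1.9 in~\cite{GV} cited in Remark~\ref{prop:lambda-hom}), hence $\lambda_a^{-1}=\lambda_{\bar a}$. Applying the ideal condition to the element $\bar a\in B$ yields $\lambda_a^{-1}(I)=\lambda_{\bar a}(I)\subseteq I$, which is equivalent to $I\subseteq \lambda_a(I)$. Combining the two inclusions gives $\lambda_a(I)=I$, and substituting back we obtain $a\circ I = a\cdot \lambda_a(I) = a\cdot I$, as required.

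The argument is essentially a one-line computation once one observes that $\lambda_a$ restricts to a bijection of $I$; the only subtlety, and the step I would flag as the small obstacle, is remembering that the ideal axiom only states $\lambda_a(I)\subseteq I$ and that surjectivity on $I$ must be obtained by applying the axiom to $\bar a$ together with the $\circ$-homomorphism property of $\lambda$. After that the result is immediate.
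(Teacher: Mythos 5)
Your proof is correct. The paper does not reproduce an argument for this lemma---it simply cites \cite{GV}---and your derivation (rewriting $a\circ i=a\cdot\lambda_a(i)$ and upgrading $\lambda_a(I)\subseteq I$ to $\lambda_a(I)=I$ by applying the ideal axiom to $\bar a$ together with $\lambda_{\bar a}=\lambda_a^{-1}$) is exactly the standard proof given in that reference, so there is nothing to add.
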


\begin{lemma}\cite[Lemma 1.10]{CSV}\label{lm:CSV}
Let $a\in \Soc(B)$. Then for all $b\in B$,
\[
\lambda_b(a)=b^{-1}\cdot(b\circ a)=b\circ a\circ \bar{b}.
\]
\end{lemma}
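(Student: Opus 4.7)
The first equality $\lambda_b(a)=b^{-1}\cdot(b\circ a)$ is simply the definition of $\lambda_b$ from Remark~\ref{prop:lambda-hom}, so no work is needed there. The real content is the second equality $b^{-1}\cdot(b\circ a) = b\circ a\circ\bar b$, and my plan is to start from the right-hand side and peel it apart using the two defining properties of $\Soc(B)$ in turn.

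First, I would use the associativity of $\circ$ to rewrite $b\circ a\circ\bar b = b\circ(a\circ\bar b)$. Since $a\in\Soc(B)$, we have $a\circ c = a\cdot c$ for every $c\in B$; specialising to $c=\bar b$ gives $a\circ\bar b = a\cdot\bar b$, and hence $b\circ a\circ\bar b = b\circ(a\cdot\bar b)$. Now the skew brace identity~\eqref{lsb} applied to this product, together with $b\circ\bar b=1$, yields
\[
b\circ(a\cdot\bar b) \;=\; (b\circ a)\cdot b^{-1}\cdot (b\circ\bar b) \;=\; (b\circ a)\cdot b^{-1}.
\]

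The only step I would call a genuine sub-step is to pass from $(b\circ a)\cdot b^{-1}$ to $b^{-1}\cdot (b\circ a)$. Writing $b\circ a = b\cdot\lambda_b(a)$, this reduces to showing that $\lambda_b(a)$ commutes with $b$ in $(B,\cdot)$. This is where the second part of the socle definition enters: since $\Soc(B)$ is an ideal of $(B,\cdot,\circ)$, it is $\lambda$-invariant, so $\lambda_b(a)\in\Soc(B)\subseteq Z(B,\cdot)$ and commutes with every element of $(B,\cdot)$. With this observation, both $(b\circ a)\cdot b^{-1}$ and $b^{-1}\cdot(b\circ a)$ collapse to $\lambda_b(a)$, which closes the chain of equalities. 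The main obstacle, which turns out to be very mild, is precisely this centrality of $\lambda_b(a)$; once one remembers that $\Soc(B)$ is an ideal and hence stable under all $\lambda_c$'s, the whole computation is essentially mechanical.
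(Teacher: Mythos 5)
Your proof is correct, but note that the paper does not prove this lemma at all -- it is quoted from \cite[Lemma 1.10]{CSV} -- so there is no in-paper argument to match; what you give is a valid substitute. Two remarks on your route. First, your only substantive step, the centrality of $\lambda_b(a)$, is justified by appealing to the fact that $\Soc(B)$ is an ideal and hence $\lambda$-invariant; this is available in the paper (it is asserted in the definition of the socle), but it is a heavier tool than needed and, in the literature, the $\lambda$-invariance of the socle is sometimes itself derived from the very identity you are proving, so it is cleaner to avoid any appearance of circularity by arguing directly: $\lambda_b$ is an automorphism of $(B,\cdot)$ by Remark~\ref{prop:lambda-hom}, automorphisms preserve the center, and $a\in Z(B,\cdot)$, so $\lambda_b(a)\in Z(B,\cdot)$ commutes with $b$, which is all you use. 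Second, the commutation step can be bypassed altogether by using the other half of the socle condition: writing $a\circ\bar b=\bar b\cdot a$ (instead of $a\cdot\bar b$) gives
\begin{align*}
b\circ a\circ\bar b=b\circ(\bar b\cdot a)\stackrel{\eqref{lsb}}{=}(b\circ\bar b)\cdot b^{-1}\cdot(b\circ a)=b^{-1}\cdot(b\circ a)=\lambda_b(a)
\end{align*}
in one line, which is essentially the proof in \cite{CSV}. Your version trades this shortcut for the extra (correct) observation that $b\cdot\lambda_b(a)\cdot b^{-1}=\lambda_b(a)$; both arguments are fine.
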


Since the socle $\Soc(B)$ is an ideal of a skew left brace, the quotient $(B,\circ)/\Soc(B)$ of the the group $(B,\circ)$ is also the quotient of the group $(B,\cdot)$ and the factor skew left brace $B/\Soc(B):=(B,\cdot,\circ)/\Soc(B)$ by $\Soc(B)$ is well defined.
\begin{de}
The socle series of $(B,\cdot,\circ)$ is defined as the sequence 
\[
B_0=B,\; B_{n+1}=B_n/\Soc(B_n), \; n\geq 0.
\]
We say that $(B,\cdot,\circ)$ is {\em nilpotent of class $n$} if $n$ is the least number such that $|B_n|=1$.
\end{de}

\begin{thm}\cite[Theorem 4.21]{SV18}\label{th:nil}
Let $(B,\cdot,\circ)$ be a nilpotent skew left brace. Then the group $(B,\cdot)$ is nilpotent. 
\end{thm}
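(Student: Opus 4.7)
The plan is to translate the socle series of $(B,\cdot,\circ)$ into a central series of the additive group $(B,\cdot)$. The key observation is that, by its very definition, $\Soc(B)=\Ker(\lambda)\cap Z(B,\cdot)$ is contained in the center of $(B,\cdot)$, so at each step of the socle series we factor out a subgroup which is additively central in the current quotient.

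Concretely, set $K_0:=\{1\}$ and inductively let $K_{i+1}$ be the preimage in $B$ of $\Soc(B/K_i)$ under the canonical projection $B\to B/K_i$. An induction on $i$ shows that this projection identifies $B/K_i$ with the term $B_i$ of the socle series, so that $K_{i+1}/K_i=\Soc(B_i)$. Since the preimage of an ideal under a surjective homomorphism of skew left braces is again an ideal, every $K_i$ is an ideal of $B$; in particular, each $K_i$ is a normal subgroup of $(B,\cdot)$. Nilpotency of class~$n$ means $|B_n|=1$, and hence $K_n=B$.

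It remains to observe that the chain
$$\{1\}=K_0\trianglelefteq K_1\trianglelefteq\cdots\trianglelefteq K_n=B$$
is a central series of $(B,\cdot)$: indeed, for every $i$ one has $K_{i+1}/K_i=\Soc(B/K_i)\subseteq Z(B/K_i,\cdot)$, so the quotient $K_{i+1}/K_i$ is central in $(B/K_i,\cdot)$. This exhibits $(B,\cdot)$ as a nilpotent group of class at most~$n$. The only mildly technical point in carrying the argument out is the inductive identification $B_i\cong B/K_i$ together with the compatibility of the socles under this identification; once this bookkeeping is set up, the nilpotency of the additive group follows immediately from the inclusion $\Soc\subseteq Z(\cdot)$, which is already built into the definition of the socle.
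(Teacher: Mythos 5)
Your proof is correct: the paper does not prove this statement itself (it is quoted from \cite[Theorem 4.21]{SV18}), and your argument --- pulling the socle series back to a chain of ideals $K_i$ and observing that $K_{i+1}/K_i=\Soc(B/K_i)\subseteq Z(B/K_i,\cdot)$ makes it a central series of $(B,\cdot)$ --- is essentially the standard proof of that cited result. The only implicit point worth flagging is the identification of the multiplicative group of the quotient brace $B/K_i$ with the group quotient $(B,\cdot)/K_i$, which holds because the $\cdot$- and $\circ$-cosets of an ideal coincide (Lemma~\ref{lem:cosets}); with that noted, the bookkeeping you describe goes through without difficulty.
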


In the rest of the section we shall focus on the
correspondence between the nilpotency of skew left braces and the multipermutation level of the associated
solutions. For involutive solutions, the correspondence was described by Rump in~\cite{Rump07A}.
In the involutive case, the mapping~$\rho$ is uniquely determined by~$\lambda$ hence there was no need to consider this mapping. In the non-involutive case, the relation~$\approx$ depends both on $\lambda$ and $\rho$ and hence we need to take a closer look at the role of~$\rho$ in the skew left brace.

\begin{proposition}\cite[Proposition 1.9]{GV}\label{prop:lambda_hom}
Let $(B,\cdot,\circ)$ be a skew left brace. The mapping 
$
\lambda\colon B\to B; \quad a\mapsto \lambda_a
$
is a homomorphism of the groups $(B,\circ)$ and $\aut{B,\cdot}$. 
\end{proposition}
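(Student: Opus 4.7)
The plan is to verify the homomorphism property $\lambda_{a\circ b}=\lambda_a\lambda_b$ by a direct computation, using the definition of $\lambda$ together with the brace axiom~\eqref{lsb} and its consequence~\eqref{eq:1.7(2)}. Since Remark~\ref{prop:lambda-hom} already supplies that each $\lambda_a$ is an automorphism of $(B,\cdot)$, the only remaining content is to establish that the assignment $a\mapsto\lambda_a$ respects the operation $\circ$; well-definedness into $\mathrm{Aut}(B,\cdot)$ is then automatic.

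First I would fix $a,b,c\in B$ and expand the right-hand side:
\[
\lambda_a(\lambda_b(c))=\lambda_a\bigl(b^{-1}\cdot(b\circ c)\bigr)=a^{-1}\cdot\bigl(a\circ(b^{-1}\cdot(b\circ c))\bigr).
\]
Then I would apply identity~\eqref{eq:1.7(2)} to the inner expression with the roles $b\mapsto b$ and $c\mapsto b\circ c$, obtaining
\[
a\circ(b^{-1}\cdot(b\circ c))=a\cdot(a\circ b)^{-1}\cdot\bigl(a\circ(b\circ c)\bigr)=a\cdot(a\circ b)^{-1}\cdot\bigl((a\circ b)\circ c\bigr),
\]
where the last equality is just associativity of $\circ$.

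Substituting back and cancelling $a^{-1}\cdot a$, I obtain
\[
\lambda_a(\lambda_b(c))=(a\circ b)^{-1}\cdot\bigl((a\circ b)\circ c\bigr)=\lambda_{a\circ b}(c),
\]
which is exactly the homomorphism identity. Since this holds for all $c$, we conclude $\lambda_{a\circ b}=\lambda_a\lambda_b$ in $\mathrm{Aut}(B,\cdot)$.

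There is essentially no obstacle here; the only subtlety is choosing the right variant of the brace axiom (namely~\eqref{eq:1.7(2)}) so that the factor $a$ produced on the right cancels the leading $a^{-1}$ coming from the outer $\lambda_a$. Had one tried to use~\eqref{lsb} directly, the computation would instead produce $(a\circ b^{-1})$ and require an additional detour through $\lambda_a$ applied to inverses. With \eqref{eq:1.7(2)} the cancellation is immediate, and the proof reduces to two lines.
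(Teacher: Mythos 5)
Your computation is correct: applying \eqref{eq:1.7(2)} with $c$ replaced by $b\circ c$ and cancelling $a^{-1}\cdot a$ gives $\lambda_a\lambda_b(c)=(a\circ b)^{-1}\cdot\bigl((a\circ b)\circ c\bigr)=\lambda_{a\circ b}(c)$, and Remark~\ref{prop:lambda-hom} guarantees the map lands in $\aut{B,\cdot}$. The paper does not reprove this statement but imports it from \cite{GV}, and your argument is essentially the standard one given there, so there is nothing to add.
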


The mapping $\rho_a$ is not an automorphism in general, as we can see on the following example.

\begin{example}
 Let $(B,+)=\Z_2^3$ and let us denote $e_0=(0,0,0)$, $e_1=(1,0,0)$, $e_2=(0,1,0)$, $e_3=(0,0,1)$ and $f=(1,1,1)$. Then each $b\in B$ can be uniquely written as $b=\varepsilon f+e_i$,
 for some $\varepsilon\in\Z_2$ and $i\in\Z_4$.
 Define now
 \[(\varepsilon f+e_i)\circ (\zeta f+e_j)=(\varepsilon +\zeta)f+e_{i+3^\varepsilon j} \qquad \text{and}\qquad \overline{\varepsilon f+e_i}=\varepsilon f+e_{3^{1+\varepsilon} i},
  \]
for any $\varepsilon,\zeta\in\Z_2$ and $i,j\in\Z_4$. It is easy to check that $(B,\circ)$ is the 8-element dihedral group and it is more complicated but still straightforward to check that
$(B,+,\circ)$ is a skew left brace (of abelian type). Then, for its associated solution, we have: 
$\rho_{f+e_1}(f+e_1)=f+e_1$, $\rho_{f+e_1}(f)=f$
and $\rho_{f+e_1}(e_1)=f+e_3$
but $(f+e_1)+f\neq f+e_3$ and therefore $\rho_{f+e_1}$ is not an endomorphism of~$(B,+)$.
Note that an equivalent but different presentation of this left brace was given by Bachiller in~\cite{B15}.
\end{example}

Nevertheless, 
even if the mappings $\rho_a$ are not endomorphisms, Bachiller showed that we can still have a partial
counterpart of~Proposition~\ref{prop:lambda_hom} for the mapping~$\rho$.

\begin{proposition}\cite[Lemma 2.4]{B18}\label{prop:tau_hom}
 Let $(B,\cdot,\circ)$ be a skew left brace. The mapping 
$
\rho\colon B\to B; \quad a\mapsto \rho_a
$
is an anti-homomorphism of the groups $(B,\circ)$ and $S_B$, that means $\rho_{a\circ b}=\rho_b\rho_a$, for all $a,b\in B$.
\end{proposition}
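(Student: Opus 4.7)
My plan is to evaluate both sides of $\rho_{a\circ b}=\rho_b\rho_a$ on an arbitrary $x\in B$ using the explicit formula $\rho_y(x)=\overline{\lambda_x(y)}\circ x\circ y$ from Theorem~\ref{Th:GV}, and reduce to a single $\lambda$-identity that will follow from the braid relation~\eqref{birack:1} already known to hold for the associated solution $(B,\lambda,\rho)$.

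First I would unfold
\begin{align*}
\rho_{a\circ b}(x)&=\overline{\lambda_x(a\circ b)}\circ x\circ a\circ b,\\
\rho_b(\rho_a(x))&=\overline{\lambda_{\rho_a(x)}(b)}\circ\overline{\lambda_x(a)}\circ x\circ a\circ b,
\end{align*}
the second line being obtained by applying the formula twice and using the associativity of $\circ$. Right-cancelling $x\circ a\circ b$ and taking $\circ$-inverses (recall $\overline{u\circ v}=\bar v\circ\bar u$) then reduces the claim to the single identity
$$\lambda_x(a\circ b)=\lambda_x(a)\circ\lambda_{\rho_a(x)}(b).$$

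Next I would prove this reduced identity by expanding both sides via $\alpha\circ\beta=\alpha\cdot\lambda_\alpha(\beta)$, which is just the definition of $\lambda$ rearranged. For the left-hand side, using associativity of $\circ$ together with Proposition~\ref{prop:lambda_hom} (i.e.\ $\lambda_{x\circ a}=\lambda_x\lambda_a$), I would get
$$\lambda_x(a\circ b)=x^{-1}\cdot\bigl((x\circ a)\circ b\bigr)=x^{-1}\cdot(x\circ a)\cdot\lambda_{x\circ a}(b)=\lambda_x(a)\cdot\lambda_x\lambda_a(b).$$
The right-hand side similarly expands to $\lambda_x(a)\circ\lambda_{\rho_a(x)}(b)=\lambda_x(a)\cdot\lambda_{\lambda_x(a)}\lambda_{\rho_a(x)}(b)$. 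After cancelling the common factor $\lambda_x(a)$, what remains is $\lambda_x\lambda_a=\lambda_{\lambda_x(a)}\lambda_{\rho_a(x)}$, which is precisely \eqref{birack:1} for $(B,\lambda,\rho)$.

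The main obstacle is bookkeeping rather than depth: one has to keep the two group operations strictly separate, handle $\circ$-inverses via $\overline{u\circ v}=\bar v\circ\bar u$, and not conflate the fact that each $\lambda_x$ is an automorphism of $(B,\cdot)$ with the distinct fact (Proposition~\ref{prop:lambda_hom}) that the assignment $x\mapsto\lambda_x$ is a homomorphism $(B,\circ)\to\aut{B,\cdot}$. Once these are sorted out, the proof is essentially a repackaging of the braid relation: the antihomomorphism property of $\rho$ with respect to $\circ$ mirrors the homomorphism property of $\lambda$, the asymmetry arising because $\rho$ occupies the second coordinate of $r$ while $\lambda$ occupies the first.
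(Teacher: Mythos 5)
Your proof is correct; note, though, that the paper does not prove Proposition~\ref{prop:tau_hom} at all --- it is quoted from Bachiller \cite[Lemma 2.4]{B18} --- so there is no in-paper argument to compare against, and your derivation stands on its own. Each step checks out: the two expansions of $\rho$ via $\rho_y(x)=\overline{\lambda_x(y)}\circ x\circ y$, the right-cancellation of $x\circ a\circ b$ in $(B,\circ)$ and the inversion rule $\overline{u\circ v}=\bar v\circ\bar u$, the rewriting $u\circ v=u\cdot\lambda_u(v)$, and the use of $\lambda_{x\circ a}=\lambda_x\lambda_a$ from Proposition~\ref{prop:lambda_hom}, reducing everything to $\lambda_x\lambda_a=\lambda_{\lambda_x(a)}\lambda_{\rho_a(x)}$, which is indeed \eqref{birack:1} for the associated solution. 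The one logical point worth flagging is that invoking \eqref{birack:1} presupposes Theorem~\ref{Th:GV} (that $(B,\lambda,\rho)$ is a solution); within this paper that theorem is itself quoted from \cite{GV}, so no circularity arises, but Bachiller's original proof is a direct computation that does not route through the braid relation. You could make your argument equally self-contained with a small shortcut: from $\rho_a(x)=\overline{\lambda_x(a)}\circ x\circ a$ one gets $x\circ a=\lambda_x(a)\circ\rho_a(x)$, and applying the homomorphism $\lambda\colon(B,\circ)\to\aut{B,\cdot}$ of Proposition~\ref{prop:lambda_hom} to both sides yields $\lambda_x\lambda_a=\lambda_{\lambda_x(a)}\lambda_{\rho_a(x)}$ directly, so Theorem~\ref{Th:GV} is not needed at that step at all.
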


\begin{proposition}\cite[Proposition 2.8]{B18}\label{prop:SocKer}
 Let $(B,\cdot,\circ)$ be a skew left brace
 and let $(B,\lambda,\rho)$ its associated solution. Then
 $\Soc(B)=\Ker\lambda \cap \Ker\rho$.
 \end{proposition}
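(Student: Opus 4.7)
The plan is to prove both inclusions of $\Soc(B)=\Ker\lambda\cap\Ker\rho$ separately, relying on the characterization $\Soc(B)=\Ker\lambda\cap Z(B,\cdot)$ given in the excerpt. For the inclusion $\Soc(B)\subseteq \Ker\lambda\cap \Ker\rho$, the condition $a\in\Ker\lambda$ is immediate from the defining equation $a\circ b=a\cdot b$. To verify $a\in\Ker\rho$, I would substitute the formula $\lambda_b(a)=b\circ a\circ \bar b$ from Lemma~\ref{lm:CSV} into $\rho_a(b)=\overline{\lambda_b(a)}\circ b\circ a$; using $\overline{b\circ a\circ \bar b}=b\circ\bar a\circ \bar b$, everything telescopes to $\rho_a(b)=b$.

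For the reverse inclusion, let $a\in K:=\Ker\lambda\cap\Ker\rho$; the goal is to prove $a\in Z(B,\cdot)$. My first step is to show that $K$ is $\lambda_y^{\pm 1}$-stable for every $y\in B$. From $\rho_a(y)=y$ I extract $\lambda_y(a)=y\circ a\circ \bar y$; then Proposition~\ref{prop:lambda_hom} gives $\lambda_{\lambda_y(a)}=\lambda_y\lambda_a\lambda_{\bar y}=\id$ and Proposition~\ref{prop:tau_hom} gives $\rho_{\lambda_y(a)}=\rho_{\bar y}\rho_a\rho_y=\id$, so $\lambda_y(a)\in K$; applying the same reasoning with $\bar y$ in place of $y$ and using $\lambda_{\bar y}=\lambda_y^{-1}$ shows $\lambda_y^{-1}(a)\in K$ as well. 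The second step is then to fix $b\in B$, set $a':=\lambda_b^{-1}(a)\in K$, and compute $b\circ a'$ in two ways: by the definition of $\lambda$, $b\circ a'=b\cdot \lambda_b(a')=b\cdot a$, while since $a'\in\Ker\rho$, the equality $\rho_{a'}(b)=b$ rearranges to $b\circ a'=\lambda_b(a')\circ b=a\circ b=a\cdot b$ (the last step because $a\in\Ker\lambda$). Equating the two expressions yields $b\cdot a=a\cdot b$, so $a\in Z(B,\cdot)$.

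The main obstacle is this reverse direction. A direct manipulation of $\rho_a=\id$ together with the definition of $\lambda$ only shows that each $\lambda_b(a)$ commutes with $b$ in $(B,\cdot)$, which is \emph{not} enough to place $a$ itself into the centre, and further attempts to promote this to an honest centrality statement run into a wall. The trick that unlocks the argument is to shift attention from $a$ to $a':=\lambda_b^{-1}(a)$: the $\lambda_y^{\pm 1}$-stability of $K$ established in the first step guarantees that $a'$ still satisfies $\rho_{a'}=\id$, which is precisely what makes the two expansions of $b\circ a'$ collapse to the desired commutativity of $a$ and $b$.
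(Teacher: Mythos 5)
Your proof is correct, and note that the paper itself offers no argument here: Proposition~\ref{prop:SocKer} is imported from Bachiller \cite{B18} as a cited result, so there is no in-paper proof to match and yours stands as a self-contained derivation from facts the paper does quote. The forward inclusion is fine: for $a\in\Soc(B)$, Lemma~\ref{lm:CSV} gives $\lambda_b(a)=b\circ a\circ\bar b$, and substituting into $\rho_a(b)=\overline{\lambda_b(a)}\circ b\circ a$ indeed collapses to $b$. The reverse inclusion is the genuine content, and your two-step structure handles it correctly: from $\rho_a(b)=b$ one gets $\lambda_y(a)=y\circ a\circ\bar y$, and then Proposition~\ref{prop:lambda_hom} ($\lambda$ a homomorphism on $(B,\circ)$) and Proposition~\ref{prop:tau_hom} ($\rho$ an anti-homomorphism on $(B,\circ)$) show that $K=\Ker\lambda\cap\Ker\rho$ is stable under every $\lambda_y^{\pm1}$; your diagnosis that a direct manipulation only yields commutation of $\lambda_b(a)$ with $b$ is accurate, and the substitution $a'=\lambda_b^{-1}(a)\in K$, with the two expansions $b\circ a'=b\cdot\lambda_b(a')=b\cdot a$ (definition of $\lambda_b$) and $b\circ a'=\lambda_b(a')\circ b=a\circ b=a\cdot b$ (using $a'\in\Ker\rho$ and $a\in\Ker\lambda$), is exactly the right way to promote this to $a\in Z(B,\cdot)$, whence $a\in\Ker\lambda\cap Z(B,\cdot)=\Soc(B)$. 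All steps use only the definition of $\rho$ from Theorem~\ref{Th:GV} and results stated in Section~\ref{sec:skew}, so the argument would serve as a legitimate replacement for the external citation.
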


\begin{corollary}\cite[Remark 4.4]{CJKAV}\label{cor:RetSoc}
 Let $(B,\cdot,\circ)$ be a skew left brace
 and let $(B,\lambda,\rho)$ its associated solution. Then
 \begin{itemize}
  \item[(i)] $\mathrm{Ret}(B,\lambda,\rho)$ is the solution
  associated to $(B,\cdot,\circ)/\Soc(B)$,
  \item[(ii)] $(B,\lambda,\rho)$ is a multipermutation solution of level~$n$ if and only if $(B,\cdot,\circ)$ is nilpotent of class~$n$.
 \end{itemize}
\end{corollary}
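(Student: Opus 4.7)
The plan is to establish part~(i) by identifying the congruence $\approx$ on $B$ with the coset equivalence modulo $\Soc(B)$, and then to derive part~(ii) by an easy induction on the level.

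For part~(i), I would unwind $x\approx y$ as the conjunction $\lambda_x=\lambda_y$ and $\rho_x=\rho_y$. By Proposition~\ref{prop:lambda_hom}, $\lambda\colon(B,\circ)\to\Aut(B,\cdot)$ is a group homomorphism, so $\lambda_x=\lambda_y$ is equivalent to $\lambda_{\bar y\circ x}=\id$, i.e.\ $\bar y\circ x\in\Ker\lambda$. By Proposition~\ref{prop:tau_hom}, $\rho$ is an anti-homomorphism from $(B,\circ)$ to $S_B$, and since $\rho_1=\id$ one has $\rho_{\bar y}=\rho_y^{-1}$; hence $\rho_x=\rho_y$ iff $\rho_{\bar y\circ x}=\rho_x\rho_{\bar y}=\id$, i.e.\ $\bar y\circ x\in\Ker\rho$. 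Combining with Proposition~\ref{prop:SocKer}, we obtain $x\approx y$ iff $\bar y\circ x\in\Soc(B)$. Thus the $\approx$-classes are exactly the cosets of $\Soc(B)$ in $(B,\circ)$, which by Lemma~\ref{lem:cosets} coincide with the cosets in $(B,\cdot)$. It then remains to check that the maps $\lambda$ and $\rho$ induced on the quotient set agree with those produced by Theorem~\ref{Th:GV} applied to the skew left brace $(B,\cdot,\circ)/\Soc(B)$; this is automatic because $\Soc(B)$ is an ideal, so both constructions descend to the same operations on representatives.

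Part~(ii) then follows by induction on $n$ using the socle series $B_{k+1}=B_k/\Soc(B_k)$. Iterating part~(i), $\mathrm{Ret}^k(B,\lambda,\rho)$ is the solution associated with $B_k$, and a one-element solution corresponds precisely to $|B_k|=1$. Hence the least $n$ with $\mathrm{Ret}^n$ trivial equals the least $n$ with $|B_n|=1$, so the multipermutation level coincides with the nilpotency class.

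The only delicate bookkeeping is in part~(i), where one must keep track of the order of composition for the anti-homomorphism $\rho$ so that both the $\lambda$- and $\rho$-conditions collapse to the membership of the \emph{same} element $\bar y\circ x$ in $\Soc(B)=\Ker\lambda\cap\Ker\rho$. Once this is settled, the verification that the quotient operations give the associated solution of the quotient brace, and the induction in part~(ii), are entirely formal.
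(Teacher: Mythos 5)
Your argument is correct, and it is worth noting that the paper itself gives no proof of this corollary at all: it is quoted from \cite[Remark 4.4]{CJKAV}, so there is no in-text argument to compare against. What you have written is a self-contained derivation from results that are already stated in the paper: Proposition~\ref{prop:lambda_hom} ($\lambda$ a homomorphism on $(B,\circ)$), Proposition~\ref{prop:tau_hom} ($\rho$ an anti-homomorphism on $(B,\circ)$, which together with $\rho_1=\id$ gives $\rho_{\bar y}=\rho_y^{-1}$ and hence $\rho_x=\rho_y\Leftrightarrow\rho_{\bar y\circ x}=\id$), Proposition~\ref{prop:SocKer} ($\Soc(B)=\Ker\lambda\cap\Ker\rho$), and Lemma~\ref{lem:cosets} to identify the $\circ$-cosets with the $\cdot$-cosets. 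The identification of the $\approx$-classes with the cosets of $\Soc(B)$ is exactly right, and the remaining check that the induced maps agree is indeed formal: on the quotient brace $\lambda_{[x]}([y])=[x^{-1}\cdot(x\circ y)]=[\lambda_x(y)]$ and $\rho_{[y]}([x])=[\rho_y(x)]$, which is precisely the retraction structure $\sigma_{x^{\approx}}(y^{\approx})=\sigma_x(y)^{\approx}$ of Definition~\ref{ret}; it would be worth spelling out this one line rather than calling it automatic. The induction in part~(ii) via the socle series is then immediate, since $\mathrm{Ret}^k(B,\lambda,\rho)$ is the solution associated to $B_k$ and both the multipermutation level and the nilpotency class are defined as the least $n$ for which the corresponding object is a singleton. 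In short: a correct proof where the paper offers only a citation.
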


\begin{de}\cite[Definition 2.11]{BNY}
A skew left brace $(B,\cdot,\circ)$ is called \emph{meta-trivial} if there exists an ideal $N$ of~$(B,\cdot,\circ)$ such that $(N,\cdot,\circ)$ is a trivial skew left brace and  the quotient skew left brace $B/N$ is trivial as well.
\end{de}

\begin{corollary}
Let $(B,\cdot,\circ)$ be a skew left brace. If its associated solution is $2$-reductive then $(B,\cdot,\circ)$ is meta-trivial.
\end{corollary}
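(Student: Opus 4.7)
The plan is to combine two results that have already been established in the paper: the characterization of $2$-reductive solutions among those satisfying \eqref{prop:star:1}--\eqref{prop:star:2} as multipermutation solutions of level at most~$2$ (Proposition~\ref{prop:star}), and the identification of nilpotency class with multipermutation level for skew left braces (Corollary~\ref{cor:RetSoc}(ii)). Since, as remarked after Theorem~\ref{Th:GV}, solutions associated to skew left braces always satisfy \eqref{prop:star:1}--\eqref{prop:star:2}, the $2$-reductivity of the associated solution $(B,\lambda,\rho)$ immediately implies that $(B,\lambda,\rho)$ has multipermutation level at most~$2$, and hence $(B,\cdot,\circ)$ is nilpotent of class at most~$2$.

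Once nilpotency of class at most~$2$ is established, I would set $N:=\Soc(B)$, which is an ideal of $(B,\cdot,\circ)$ by definition. The point is then to verify the two defining properties of a meta-trivial skew left brace with this choice of~$N$. First, for any $a,b\in N$ we have $a\circ b=a\cdot b$ directly from the definition of the socle, so the restriction of $\circ$ to $N$ coincides with the restriction of $\cdot$, meaning $(N,\cdot,\circ)$ is a trivial skew left brace. Second, the nilpotency condition $B_2=1$ means $B_1=B/\Soc(B)=\Soc(B_1)$, so that in the quotient $B/N$ every element belongs to the socle; by the definition of the socle this forces $\bar a\circ\bar b=\bar a\cdot\bar b$ for all $\bar a,\bar b\in B/N$, i.e.\ $B/N$ is trivial.

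The argument is essentially a two-line chain of applications of earlier results, so there is no genuine obstacle. The only minor care needed is in the degenerate cases: if the nilpotency class is~$0$ or~$1$, then either $B=\{1\}$ (trivial case) or $B=\Soc(B)$ is already trivial, and the choice $N=B$ or $N=\{1\}$ still fits the definition of meta-trivial. Hence the argument applies uniformly to all nilpotency classes $\le 2$ and yields the desired conclusion.
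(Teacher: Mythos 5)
Your argument is correct, and it reaches the paper's conclusion (take $N=\Soc(B)$) by a slightly different route. The paper's proof is the chain: Corollary~\ref{lm:sf} (the retraction of a $2$-reductive solution is a projection solution) together with Corollary~\ref{cor:RetSoc}(i) (that retraction is the solution associated to $B/\Soc(B)$), which immediately forces $\lambda_{\bar a}=\id$ on the quotient, i.e.\ $\bar a\circ\bar b=\bar a\cdot\bar b$, so $B/\Soc(B)$ is a trivial skew left brace, while $\Soc(B)$ itself is trivial by definition. You instead pass through multipermutation level: $2$-reductivity plus conditions \eqref{prop:star:1}--\eqref{prop:star:2} (which hold for brace solutions since $\lambda_1=\rho_1=\id$) give level at most~$2$ via Proposition~\ref{prop:star}, then Corollary~\ref{cor:RetSoc}(ii) gives nilpotency class at most~$2$, and you unwind $B_2=1$ into $\Soc(B/\Soc(B))=B/\Soc(B)$, which again yields triviality of the quotient. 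In effect you are anticipating the equivalences (i)$\Leftrightarrow$(iii)$\Leftrightarrow$(iv) of Theorem~\ref{thm:2-red_brace}; there is no circularity in doing so, since that theorem does not rely on this corollary, but the paper's version is one translation step shorter because it never needs the nilpotency language. Your handling of the degenerate classes $0$ and $1$ is harmless but unnecessary: the single choice $N=\Soc(B)$ works uniformly, since class $\le 2$ already makes both $\Soc(B)$ and $B/\Soc(B)$ trivial in those cases as well.
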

\begin{proof}
It follows directly by Corollaries \ref{lm:sf} and  \ref{cor:RetSoc}(i) and the fact that the socle of $(B,\cdot,\circ)$ is a trivial skew left brace (of abelian type).
\end{proof}

\section{Distributive solutions}\label{sec:distr}

In~\cite{JPZ20} the authors and Zamojska-Dzienio were studying so called distributive solutions.
In this section we establish a connection
between bi-skew left braces and results
about distributive solutions.

\begin{de}
We will say that a solution $(X,\sigma,\tau)$ is \emph{left distributive}, if for every $x,y \in X$:
\begin{align}\label{eq:left}
\sigma_x\sigma_y=\sigma_{\sigma_x(y)}\sigma_x,
\end{align}
and it is \emph{right distributive}, if for every $x,y \in X$:
\begin{align}\label{eq:right}
\tau_x\tau_y=\tau_{\tau_x(y)}\tau_x.
\end{align}
A solution is \emph{distributive} if it is left and right distributive. 
\end{de}

\begin{example}[Lyubashenko, see \cite{Dr90}]\label{ex:Lyub}
Let $X$ be a non-empty set and let $f,g\colon X\to X$ be two bijections such that $fg=gf$. Then the permutational solution $(X,\sigma,\tau)$ with $\sigma_x=f$ and $\tau_y=g$, for each $x,y\in X$, is distributive.

\end{example}

\begin{example}
Let $(X,\sigma,\id)$ be a solution. Then by \eqref{birack:1} it is distributive. By \eqref{birack:3} the same we have for a solution $(X,\id,\tau)$. Hence, solutions from Examples \ref{exm:trisol} and \ref{exm:almtrisol} are both distributive.
\end{example}

\begin{example}
The solution from Example \ref{exm:Z2n} is left distributive but not right distributive.
\end{example}

\begin{theorem}\label{thm:dislbilred}
Let $(X,\sigma,\tau)$ be a solution. Then the following conditions are equivalent:
\begin{enumerate}
\item[(i)] $(X,\sigma,\tau)$ is left distributive,
\item[(ii)] $(X,\sigma,\tau)$ satisfies \eqref{eq:red3}, that means $\sigma_{\tau_x(y)}=\sigma_y$, for all $x,y\in X$,
\item[(iii)] $(X,\sigma,\tau)$ satisfies \eqref{eq:solbi}, that means $\sigma_{\hat\sigma_x(y)}=\sigma_y$, for all $x,y\in X$,
\item[(iv)] $(X,\sigma,\tau)$ satisfies $\hat{\tau}_x=\sigma^{-1}_x$, for all $x\in X$,
\item[(v)] $\LMlt(X)\leq \Aut(X,\sigma,\tau)$.
\end{enumerate}
\end{theorem}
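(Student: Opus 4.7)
The plan is to prove the cyclic equivalences using (ii) $\Leftrightarrow$ (iii), which is already Proposition \ref{prop:blr}; the remaining tasks are (i) $\Leftrightarrow$ (ii), (iii) $\Leftrightarrow$ (iv), and (i) $\Leftrightarrow$ (v).

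For (i) $\Leftrightarrow$ (ii), I would compare the definition \eqref{eq:left} of left distributivity with the braid identity \eqref{birack:1}, $\sigma_x\sigma_y=\sigma_{\sigma_x(y)}\sigma_{\tau_y(x)}$, which always holds: cancelling the common factor $\sigma_{\sigma_x(y)}$, one sees that (i) is equivalent to $\sigma_{\tau_y(x)}=\sigma_x$, which is (ii) after renaming variables. For (i) $\Leftrightarrow$ (v), the direction (v) $\Rightarrow$ (i) is immediate by rearranging the automorphism condition $\sigma_x\sigma_y\sigma_x^{-1}=\sigma_{\sigma_x(y)}$. For (i) $\Rightarrow$ (v), I would check that every generator $\sigma_x$ of $\LMlt(X)$ preserves both $\sigma$ and $\tau$: preservation of $\sigma$ is left distributivity itself, and preservation of $\tau$ follows by substituting (ii) (equivalent to (i)) twice into the braid identity \eqref{birack:2}, which causes both subscripts $\sigma_{\tau_y(x)}$ and $\sigma_{\tau_{\sigma_y(z)}(x)}$ to collapse to $\sigma_x$, yielding $\sigma_x\tau_z=\tau_{\sigma_x(z)}\sigma_x$, i.e.\ the required conjugation relation.

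For (iii) $\Leftrightarrow$ (iv), I would use \eqref{rr:1} in the spirit of Proposition \ref{prop:blr}. The forward direction (iii) $\Rightarrow$ (iv) is immediate: the identity $\sigma_{\hat\sigma_x(y)}\hat\tau_y(x)=x$ combined with (iii) gives $\sigma_y\hat\tau_y(x)=x$, whence $\hat\tau_y=\sigma_y^{-1}$. The hard part will be the converse (iv) $\Rightarrow$ (iii). Under (iv), the third form of \eqref{rr:1} rewrites to $\sigma_y^{-1}(x)=\sigma^{-1}_{\hat\sigma^{-1}_x(y)}(x)$, which is initially only a pointwise identity; promoting it to the functional identity $\sigma_y=\sigma_{\hat\sigma_x(y)}$ requires the ``replace $x$ by $\hat\sigma_y(x)$'' substitution technique from the proof of Proposition \ref{prop:blr}, adapted using the identity $\hat\sigma_y^{-1}(x)=\tau_{\sigma_x^{-1}(y)}(x)$ from \eqref{rr:3}. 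This is the main obstacle, since the identities \eqref{rr:1}--\eqref{rr:4} combined with (iv) yield only diagonal pointwise equalities without the additional substitution trick.
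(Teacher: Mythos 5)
Most of your plan coincides with the paper's proof and is correct: (i)$\Leftrightarrow$(ii) is exactly the cancellation of $\sigma_{\sigma_x(y)}$ in \eqref{birack:1} (the paper quotes this step as \cite[Lemma 2.8]{JPZ20}); (ii)$\Leftrightarrow$(iii) is Proposition~\ref{prop:blr}; (v)$\Rightarrow$(i) is immediate from the definition of an automorphism, and your (i)$\Rightarrow$(v), collapsing both subscripts in \eqref{birack:2} via \eqref{eq:red3} to get $\tau_{\sigma_x(z)}\sigma_x=\sigma_x\tau_z$, is literally the paper's computation. Your one-line (iii)$\Rightarrow$(iv) from \eqref{rr:1} is also fine (the paper obtains the analogous arrow (ii)$\Rightarrow$(iv) from \eqref{rr:4}).

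The genuine gap is the arrow out of (iv), and your proposal does not close it. As you yourself observe, everything that (iv) yields through \eqref{rr:1}--\eqref{rr:4} alone is a pointwise, diagonal identity --- e.g. $\sigma^{-1}_{\hat\sigma^{-1}_x(y)}(x)=\sigma^{-1}_y(x)$, equivalently $\sigma_{\tau_y(x)}(y)=\sigma_x(y)$ --- in which the evaluation point is tied to the free variables, whereas (ii) and (iii) are equalities of the permutations themselves. The remedy you sketch, the ``replace $x$ by $\hat\sigma_y(x)$'' substitution from Proposition~\ref{prop:blr}, cannot repair this: in that proposition the substitution is performed in a functional identity $\sigma_a=\sigma_b$ whose subscripts are bijective expressions in the free variables, so a change of variables is harmless; applied to your pointwise identity it only reparametrizes the diagonal and never decouples the argument from the subscript. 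Consequently your argument establishes (i)$\Leftrightarrow$(ii)$\Leftrightarrow$(iii)$\Leftrightarrow$(v) and that these imply (iv), but not that (iv) implies the rest, which the theorem asserts. For comparison, the paper disposes of (ii)$\Leftrightarrow$(iv) with the single chain $\hat\tau^{-1}_x(y)\stackrel{\eqref{rr:4}}{=}\sigma_{\tau^{-1}_y(x)}(y)\stackrel{\eqref{eq:red3}}{=}\sigma_x(y)$, which is airtight in the direction (ii)$\Rightarrow$(iv); the direction you are missing is precisely the one hidden in reading that chain backwards, and closing it requires input beyond the unravelling identities \eqref{rr:1}--\eqref{rr:4} (for instance the birack identities \eqref{birack:1}--\eqref{birack:3} applied to the inverse solution $(X,\hat\sigma,\hat\tau)$, which under (iv) turn into relations among the $\sigma$'s), none of which appears in your proposal.
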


\begin{proof}
By \cite[Lemma 2.8]{JPZ20}, the following conditions are equivalent:
\begin{align}\label{eq:disl}
&\sigma_x\sigma_y=\sigma_{\sigma_x(y)}\sigma_x\quad \Leftrightarrow\quad \sigma_x=\sigma_{\tau_y(x)}\quad \Leftrightarrow\quad \sigma_x=\sigma_{\tau^{-1}_y(x)}.
\end{align}
This is actually (i)$\Leftrightarrow$(ii). The equivalence (ii)$\Leftrightarrow$(iii)
was proved in Proposition~\ref{prop:blr}. Now
\begin{align}
\hat{\tau}^{-1}_x(y)\stackrel{\eqref{rr:4}}=\sigma_{\tau^{-1}_y(x)}(y)\stackrel{\eqref{eq:red3}}=\sigma_x(y)\quad \Leftrightarrow\quad \hat{\tau}_x=\sigma^{-1}_x,
\end{align}
proving (ii)$\Leftrightarrow$(iv). Finally, for each $x\in X$, the mapping $\sigma_x$ is a permutation and (v)$\Rightarrow$(i)
follows from the definition. On the other hand,
\[\tau_{\sigma_{\tau_y(x)}(z)}\sigma_x(y)\stackrel{\eqref{birack:2}}=\sigma_{\tau_{\sigma_y(z)}(x)}\tau_{z}(y)
\quad
\stackrel{\eqref{eq:red3}}\Rightarrow\quad \tau_{\sigma_{x}(z)}\sigma_x=\sigma_{x}\tau_{z}\]
proving (i)\&(ii)$\Rightarrow$(v).
\end{proof}

\begin{corollary}\label{corol:left_dist}
Let $(B,\cdot,\circ)$ be a skew left brace. Then $(B,\cdot,\circ)$ is a bi-skew left brace if and only if its associated solution $(B,\lambda,\rho)$ satisfies
one of the properties from Theorem~\ref{thm:dislbilred}.
\end{corollary}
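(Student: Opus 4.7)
The plan is to obtain this corollary as an immediate consequence of two results already at our disposal. Theorem~\ref{th:sbb} of Stefanello and Trappeniers asserts that $(B,\cdot,\circ)$ is a bi-skew left brace if and only if the associated solution satisfies $\lambda_{\hat\lambda_x(y)}=\lambda_y$ for all $x,y\in B$. Translating this into the notation $(X,\sigma,\tau)=(B,\lambda,\rho)$, the condition reads precisely $\sigma_{\hat\sigma_x(y)}=\sigma_y$, which is condition (iii) of Theorem~\ref{thm:dislbilred}.

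Since Theorem~\ref{thm:dislbilred} establishes the mutual equivalence of the five conditions (i)--(v) for any solution, it follows that $(B,\cdot,\circ)$ is a bi-skew left brace if and only if the associated solution $(B,\lambda,\rho)$ satisfies any one (equivalently, all) of these properties. This is exactly the statement of the corollary, so the proof is a one-line invocation of the two theorems.

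There is essentially no obstacle; the substantive content is entirely packaged in Theorem~\ref{th:sbb} and Theorem~\ref{thm:dislbilred}. The only minor point worth flagging in the write-up is the bookkeeping between the brace-theoretic notation $(\lambda,\rho)$ and the solution-theoretic notation $(\sigma,\tau)$ used in Theorem~\ref{thm:dislbilred}, which is purely cosmetic.
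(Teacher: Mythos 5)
Your proposal is correct and matches the paper's (implicit) argument exactly: the corollary is stated without proof precisely because it is the immediate combination of Theorem~\ref{th:sbb}, identifying bi-skew left braces with solutions satisfying \eqref{eq:solbi}, and Theorem~\ref{thm:dislbilred}, whose condition (iii) is \eqref{eq:solbi} and whose five conditions are mutually equivalent. Nothing further is needed.
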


Analogously we can prove the mirror image of Theorem \ref{thm:dislbilred}.

\begin{theorem}\label{thm:disrbilred}
Let $(X,\sigma,\tau)$ be a solution. Then the following conditions are equivalent:
\begin{enumerate}
\item[(i)] $(X,\sigma,\tau)$ is right distributive,
\item[(ii)] $(X,\sigma,\tau)$ satisfies \eqref{eq:red4}, that means $\tau_{\sigma_x(y)}=\tau_y$, for all $x,y\in X$,
\item[(iii)] $(X,\sigma,\tau)$ satisfies \eqref{eq:solbir}, that means $\tau_{\hat\tau_x(y)}=\tau_y$, for all $x,y\in X$,
\item[(iv)] $(X,\sigma,\tau)$ satisfies $\hat{\sigma}_x=\tau^{-1}_x$, for all $x\in X$,
\item[(v)] $\RMlt(X)\leq \Aut(X,\sigma,\tau)$.
\end{enumerate}
\end{theorem}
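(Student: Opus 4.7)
The plan is to mirror the proof of Theorem~\ref{thm:dislbilred} throughout, systematically swapping the roles of $\sigma$ and $\tau$, of $\hat\sigma$ and $\hat\tau$, of the relations \eqref{rr:3} and \eqref{rr:4}, and of the braid identities \eqref{birack:1} and \eqref{birack:3}.

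For (i)$\Leftrightarrow$(ii), I would establish the mirror of \cite[Lemma 2.8]{JPZ20}: right distributivity $\tau_x\tau_y=\tau_{\tau_x(y)}\tau_x$ is equivalent to $\tau_x=\tau_{\sigma_y(x)}$, which (upon swapping variables) is \eqref{eq:red4}. The argument is immediate from \eqref{birack:3}, $\tau_x\tau_y = \tau_{\tau_x(y)}\tau_{\sigma_y(x)}$, by left cancellation.

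For (ii)$\Leftrightarrow$(iii), I would adapt the reasoning of Proposition~\ref{prop:blr}. Assuming \eqref{eq:solbir}, \eqref{rr:4} yields $y = \hat\tau_{\tau_y(x)}\sigma_x(y)$, so $\tau_y = \tau_{\hat\tau_{\tau_y(x)}\sigma_x(y)} = \tau_{\sigma_x(y)}$ by \eqref{eq:solbir}. Conversely, \eqref{eq:red4} implies the extended identity $\tau_{\sigma_a^{-1}(b)} = \tau_b$ (by substituting $b \mapsto \sigma_a^{-1}(b)$), so
\[
\tau_{\hat\tau_y^{-1}(x)} \stackrel{\eqref{rr:4}}{=} \tau_{\sigma_{\tau_x^{-1}(y)}(x)} \stackrel{\eqref{eq:red4}}{=} \tau_x,
\]
and replacing $x$ by $\hat\tau_y(x)$ yields \eqref{eq:solbir}.

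For (ii)$\Leftrightarrow$(iv), the mirror chain reads
\[
\hat\sigma_y^{-1}(x) \stackrel{\eqref{rr:3}}{=} \tau_{\sigma_x^{-1}(y)}(x) \stackrel{\eqref{eq:red4}}{=} \tau_y(x),
\]
which is equivalent to $\hat\sigma_y = \tau_y^{-1}$. The implication (v)$\Rightarrow$(i) is immediate from the definition of automorphism applied to $\phi = \tau_z$. For (i)\&(ii)$\Rightarrow$(v), combining \eqref{birack:2} with \eqref{eq:red4} gives
\[
\tau_z\sigma_x(y) \stackrel{\eqref{eq:red4}}{=} \tau_{\sigma_{\tau_y(x)}(z)}\sigma_x(y) \stackrel{\eqref{birack:2}}{=} \sigma_{\tau_{\sigma_y(z)}(x)}\tau_z(y) \stackrel{\eqref{eq:red4}}{=} \sigma_{\tau_z(x)}\tau_z(y),
\]
so $\tau_z\sigma_x = \sigma_{\tau_z(x)}\tau_z$. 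Together with right distributivity this shows that each generator $\tau_z$ of $\RMlt(X)$ lies in $\Aut(X,\sigma,\tau)$.

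The main subtlety to watch, exactly as in the original theorem, is the bidirectional reading of the chain in (ii)$\Leftrightarrow$(iv): one must identify the pointwise equality $\hat\sigma_y^{-1}(x) = \tau_y(x)$ (for all $x,y$) with the functional identity $\hat\sigma_y = \tau_y^{-1}$, and any backward passage to \eqref{eq:red4} should be carried out in the same manner as in Theorem~\ref{thm:dislbilred}. Once this bookkeeping is handled analogously, the five conditions are seen to be equivalent.
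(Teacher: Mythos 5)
Your proposal is correct and is essentially the paper's own argument: the paper proves this theorem simply by declaring it the mirror image of Theorem~\ref{thm:dislbilred} (``analogously''), and your mirrored chains --- swapping \eqref{rr:3}/\eqref{rr:4}, \eqref{birack:1}/\eqref{birack:3}, and \eqref{eq:red3}/\eqref{eq:red4} --- reproduce exactly the steps of that proof, including the same reading of the pointwise-versus-functional equivalence in (ii)$\Leftrightarrow$(iv) that the paper itself uses.
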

 
\begin{corollary}\label{corol:left_right}
A solution $(X,\sigma,\tau)$ is left distributive if and only if $(X,\hat{\sigma},\hat{\tau})$ is right distributive.
\end{corollary}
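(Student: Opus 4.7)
The plan is to derive this corollary directly from the equivalences (i)$\Leftrightarrow$(iv) established in Theorems~\ref{thm:dislbilred} and~\ref{thm:disrbilred}, combined with the fact that taking the inverse of a solution is involutive.

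First, I would apply Theorem~\ref{thm:dislbilred}(i)$\Leftrightarrow$(iv) to $(X,\sigma,\tau)$: this solution is left distributive if and only if $\hat{\tau}_x=\sigma_x^{-1}$ for every $x\in X$. Next, I would apply Theorem~\ref{thm:disrbilred}(i)$\Leftrightarrow$(iv), but this time to the inverse solution $(X,\hat\sigma,\hat\tau)$ in place of $(X,\sigma,\tau)$. This is legitimate because the paper already noted that $(X,r^{-1})$ is a solution, so all results from Sections~\ref{sec:prelim}--\ref{sec:distr} apply to it. Writing $\hat{\hat\sigma}$ and $\hat{\hat\tau}$ for the ``hats'' computed relative to $(X,\hat\sigma,\hat\tau)$, the theorem gives that $(X,\hat\sigma,\hat\tau)$ is right distributive if and only if $\hat{\hat\sigma}_x=\hat\tau_x^{-1}$ for every $x\in X$.

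The key observation is then that $(r^{-1})^{-1}=r$, and hence $\hat{\hat\sigma}=\sigma$ and $\hat{\hat\tau}=\tau$. Substituting into the previous equivalence, $(X,\hat\sigma,\hat\tau)$ is right distributive if and only if $\sigma_x=\hat\tau_x^{-1}$, which is the same as $\hat\tau_x=\sigma_x^{-1}$. Combining with the first step, both conditions are equivalent to $\hat\tau_x=\sigma_x^{-1}$ for all $x\in X$, and the corollary follows.

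There is no real obstacle: the content of the statement is essentially that conditions (iv) of the two theorems are dual to each other under $r\leftrightarrow r^{-1}$. The only point requiring (trivial) care is to keep track of whose ``hat'' is whose, so one does not accidentally confuse the operations $\hat\sigma_x$ (inverse of $(X,\sigma,\tau)$) with the analogous operations computed inside $(X,\hat\sigma,\hat\tau)$; the involutivity of the inverse-solution construction is precisely what resolves this.
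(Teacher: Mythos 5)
Your proposal is correct: both left distributivity of $(X,\sigma,\tau)$ and right distributivity of $(X,\hat\sigma,\hat\tau)$ collapse, via condition (iv) of Theorems~\ref{thm:dislbilred} and~\ref{thm:disrbilred} respectively (the latter applied to the inverse solution, whose own ``hats'' are $\sigma,\tau$ because $(r^{-1})^{-1}=r$), to the single identity $\hat\tau_x=\sigma_x^{-1}$, and the equivalence is immediate. The paper takes a slightly more computational route: for the forward direction it uses (iv) together with \eqref{eq:red3} to verify $\hat\tau_{\tau_y(x)}=\hat\tau_x$, i.e.\ condition (iii) of Theorem~\ref{thm:disrbilred} for $(X,\hat\sigma,\hat\tau)$, and for the converse it uses (iv) and \eqref{eq:red4} for the inverse solution to verify $\sigma_{\hat\sigma_y(x)}=\sigma_x$, i.e.\ condition (iii) of Theorem~\ref{thm:dislbilred} for $(X,\sigma,\tau)$; it thus argues each implication separately and makes explicit how the identities \eqref{eq:red3}--\eqref{eq:red4} transfer between $r$ and $r^{-1}$. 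Your version buys brevity -- no computation at all, just the duality of the two conditions (iv) under $r\leftrightarrow r^{-1}$ -- while the paper's version displays the transferred identities that are reused elsewhere; note that the paper's conclusion ``which is equivalent to right distributivity of $(X,\hat\sigma,\hat\tau)$'' implicitly relies on the same double-hat identification that you spell out, so your bookkeeping point is exactly the right one to be careful about.
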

\begin{proof}
Let $(X,\sigma,\tau)$ be a left distributive solution. By Theorem~\ref{thm:dislbilred}, we have $\hat{\tau}_y=\sigma^{-1}_y$, for $y\in X$ . Then
\begin{align*}
\hat{\tau}_{\tau_y(x)}=\sigma^{-1}_{\tau_y(x)}\stackrel{\eqref{eq:red3}}=\sigma^{-1}_x=\hat{\tau}_x,
\end{align*}
which is equivalent to right distributivity of $(X,\hat{\sigma},\hat{\tau})$.

If a solution $(X,\hat{\sigma},\hat{\tau})$ is right distributive we have $\sigma_y=\hat{\tau}^{-1}_y$ which implies
\begin{align*}
\sigma_{\hat{\sigma}_y(x)}=\hat{\tau}^{-1}_{\hat{\sigma}_y(x)}\stackrel{\eqref{eq:red4}}=\hat{\tau}^{-1}_x=\sigma_x
\end{align*}
and shows that $(X,\sigma,\tau)$ is left distributive.
\end{proof}

By Theorem~\ref{thm:Koch}, Corollary~\ref{corol:left_dist} and Corollary~\ref{corol:left_right} we obtain the following:
\begin{corollary}\label{cor:opbi}
 Let $(B,\cdot,\circ)$ be a skew left brace and $(B,\lambda,\rho)$ its associated solution. Then $(B,\cdot_{op},\circ)$ is a bi-skew left brace if and only if $(B,\lambda,\rho)$ satisfies \eqref{eq:solbir}, that means $\rho_{\hat\rho_a(b)}=\rho_b$, for all $a,b\in B$.
\end{corollary}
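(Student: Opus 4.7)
The plan is to chain together the three results referenced in the corollary's announcement. The goal is to move from a statement about the opposite skew left brace $(B,\cdot_{op},\circ)$ to a condition phrased purely in terms of the original associated solution $(B,\lambda,\rho)$, and the natural bridge is the inverse solution $(B,\hat\lambda,\hat\rho)$.

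First I would apply Corollary \ref{corol:left_dist} not to $(B,\cdot,\circ)$ but to the opposite skew left brace $(B,\cdot_{op},\circ)$. That corollary asserts that a skew left brace is a bi-skew left brace if and only if its associated solution is left distributive (picking equivalence (i) from Theorem \ref{thm:dislbilred}). By Theorem \ref{thm:Koch}, the solution associated to $(B,\cdot_{op},\circ)$ is precisely the inverse solution $(B,\hat\lambda,\hat\rho)$. Hence $(B,\cdot_{op},\circ)$ is a bi-skew left brace if and only if $(B,\hat\lambda,\hat\rho)$ is left distributive.

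Next I would invoke Corollary \ref{corol:left_right}, which says that $(X,\sigma,\tau)$ is left distributive precisely when $(X,\hat\sigma,\hat\tau)$ is right distributive. Applying this with $(\sigma,\tau)=(\hat\lambda,\hat\rho)$ and using that taking the inverse is an involution (so $(\hat{\hat\lambda},\hat{\hat\rho})=(\lambda,\rho)$), the left distributivity of $(B,\hat\lambda,\hat\rho)$ is equivalent to the right distributivity of $(B,\lambda,\rho)$.

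Finally I would use Theorem \ref{thm:disrbilred}, specifically the equivalence (i)$\Leftrightarrow$(iii), to translate right distributivity of $(B,\lambda,\rho)$ into the identity $\rho_{\hat\rho_a(b)}=\rho_b$ for all $a,b\in B$, which is \eqref{eq:solbir}. Composing these three equivalences yields the claim. No step presents a real obstacle; the only point that needs a brief verification in the writing is the involutivity of the map $(\sigma,\tau)\mapsto(\hat\sigma,\hat\tau)$, which is immediate from the definition of $r^{-1}$ at the start of Section \ref{sec:prelim}.
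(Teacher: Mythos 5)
Your proposal is correct and is essentially the paper's own argument: the paper proves this corollary by exactly the same chain, citing Theorem~\ref{thm:Koch}, Corollary~\ref{corol:left_dist} and Corollary~\ref{corol:left_right}, with the final identification of right distributivity with \eqref{eq:solbir} via Theorem~\ref{thm:disrbilred} left implicit. You have merely spelled out the steps (including the harmless observation that taking the inverse solution is an involution), so nothing further is needed.
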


\begin{corollary}
 Let $(B,\cdot,\circ)$ be a skew left brace and $(B,\lambda,\rho)$ its associated solution. Then $(B,\lambda,\rho)$ is distributive if and only if $(B,\circ,\cdot)$ is a skew two-sided brace.
\end{corollary}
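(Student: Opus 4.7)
The plan is to decompose the statement ``$(B,\lambda,\rho)$ is distributive'' into its left and right distributivity halves and then to invoke the two immediately preceding corollaries, each of which translates one half into a bi-skew statement about a suitable skew left brace.

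Concretely, I first recall that $(B,\lambda,\rho)$ is distributive exactly when it is both left distributive and right distributive. Corollary~\ref{corol:left_dist} already says that left distributivity of $(B,\lambda,\rho)$ is equivalent to $(B,\cdot,\circ)$ being a bi-skew left brace, which by Definition~\ref{def:SLB} is simply the assertion that $(B,\circ,\cdot)$ is a skew left brace. Next, Corollary~\ref{cor:opbi} tells me that right distributivity of $(B,\lambda,\rho)$ is equivalent to $(B,\cdot_{op},\circ)$ being a bi-skew left brace; since $(B,\cdot_{op},\circ)$ is automatically a skew left brace (as the opposite of $(B,\cdot,\circ)$), this reduces to the single condition that $(B,\circ,\cdot_{op})$ is a skew left brace.

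The only real content left is to verify that, under the standing assumption that $(B,\circ,\cdot)$ is a skew left brace, $(B,\circ,\cdot_{op})$ being a skew left brace is equivalent to $(B,\circ,\cdot)$ satisfying the two-sided identity~\eqref{eq:twosided}. To see this I would write the defining identity~\eqref{lsb} for the candidate skew left brace $(B,\circ,\cdot_{op})$, namely
\[a\cdot_{op}(b\circ c)=(a\cdot_{op} b)\circ \bar a\circ(a\cdot_{op} c),\]
then unfold $\cdot_{op}$ to obtain $(b\circ c)\cdot a=(b\cdot a)\circ \bar a\circ(c\cdot a)$, and finally relabel $(a,b,c)\mapsto(c,a,b)$ to land on $(a\circ b)\cdot c=(a\cdot c)\circ\bar c\circ(b\cdot c)$, which is precisely the two-sided identity for $(B,\circ,\cdot)$.

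Assembling the two halves, $(B,\lambda,\rho)$ is distributive iff $(B,\circ,\cdot)$ is a skew left brace satisfying the two-sided identity, i.e.\ a skew two-sided brace. The only obstacle in the argument is the bookkeeping in the relabeling step above; the substantive work has already been done in Corollaries~\ref{corol:left_dist} and~\ref{cor:opbi}, so what remains is essentially a two-line assembly.
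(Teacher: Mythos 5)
Your proposal is correct and follows essentially the same route as the paper: split distributivity into its left and right halves, translate them via Corollary~\ref{corol:left_dist} and Corollary~\ref{cor:opbi} (with Theorem~\ref{thm:disrbilred}), and identify the second half with Condition~\eqref{eq:twosided} for $(B,\circ,\cdot)$. The only difference is that you spell out the unfolding of $\cdot_{op}$ and the relabeling showing that $(B,\circ,\cdot_{op})$ being a skew left brace is the two-sided identity, a step the paper merely asserts, and your computation there is correct.
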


\begin{proof}
Recall, a solution  $(B,\lambda,\rho)$ is left distributive if and only if $(B,\cdot,\circ)$ is a bi-skew left brace which is equivalent to $(B,\circ,\cdot)$ being a skew left brace.
 
A solution $(B,\lambda,\rho)$ is right distributive if and only if 
 $(B,\hat\lambda,\hat\rho)$ is left distributive if and only if $(B,\cdot_{op},\circ)$ is a bi-skew left brace, according
 to Corollary \ref{cor:opbi} and Theorem \ref{thm:disrbilred}.
 This is equivalent to satisfying Condition \eqref{eq:twosided}.
\end{proof}

\begin{lemma}
Let $(X,\sigma,\tau)$ be a left distributive solution. Then for $x,y\in X$ 
\begin{align}
&\tau_y\tau_x=\tau_{\hat{\tau}_y(x)}\tau_{\hat{\sigma}_x(y)}.\label{eq:hathattau}
\end{align}
\end{lemma}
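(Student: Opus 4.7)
The plan is to observe that the right-hand side of \eqref{eq:hathattau} is exactly what the braid relation \eqref{birack:3} produces when its two arguments are taken to be the coordinates of $r^{-1}(x,y)$, read in the opposite order. Concretely, I would introduce the abbreviations $u=\hat\tau_y(x)$ and $v=\hat\sigma_x(y)$, so that by the very definition of $\hat\sigma$ and $\hat\tau$ we have $r^{-1}(x,y)=(v,u)$, and therefore $r(v,u)=(x,y)$. Unpacking this last equality via $r(v,u)=(\sigma_v(u),\tau_u(v))$ yields the two identities
\[
\sigma_v(u)=x, \qquad \tau_u(v)=y.
\]

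Next, I would apply \eqref{birack:3} to the pair $(u,v)$ in the form $\tau_u\tau_v=\tau_{\tau_u(v)}\tau_{\sigma_v(u)}$ and substitute the two equalities above. This immediately produces
\[
\tau_{\hat\tau_y(x)}\tau_{\hat\sigma_x(y)}=\tau_u\tau_v=\tau_{\tau_u(v)}\tau_{\sigma_v(u)}=\tau_y\tau_x,
\]
which is exactly \eqref{eq:hathattau}.

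I would point out that this direct argument does not actually use the left distributivity hypothesis; identity \eqref{eq:hathattau} holds in every solution. The hypothesis is relevant only insofar as Theorem~\ref{thm:dislbilred}(iv) lets one rewrite $\hat\tau_y(x)$ as $\sigma_y^{-1}(x)$, which is presumably the form in which the lemma will be used in later arguments about distributive solutions. The only real obstacle is a notational one: keeping straight the order of the arguments in $r$ and $r^{-1}$ so as to correctly identify which cancellation from \eqref{rr:1}--\eqref{rr:4} is being invoked. Once that bookkeeping is in place, the proof is a single application of the braid relation.
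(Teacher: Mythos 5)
Your proof is correct, and its core is the same single computation as the paper's: an application of \eqref{birack:3} at the point $r^{-1}(x,y)=(\hat\sigma_x(y),\hat\tau_y(x))$. The difference lies in how the two coordinates are identified. The paper starts from $\tau_y=\tau_{\tau_{\hat\tau_y(x)}\hat\sigma_x(y)}$ via \eqref{rr:2}, expands with \eqref{birack:3}, and then simplifies $\tau^{-1}_{\sigma_{\hat\sigma_x(y)}\hat\tau_y(x)}$ to $\tau_x^{-1}$ by invoking Theorem~\ref{thm:dislbilred} (items (iii) and (iv)), i.e.\ by using the left distributivity hypothesis; you instead read off $\sigma_{\hat\sigma_x(y)}(\hat\tau_y(x))=x$ and $\tau_{\hat\tau_y(x)}(\hat\sigma_x(y))=y$ directly from $rr^{-1}=\mathrm{id}$, which is exactly \eqref{rr:1} and \eqref{rr:2} and requires no hypothesis at all. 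Your concluding observation is therefore right: identity \eqref{eq:hathattau} holds in every solution, and the paper's appeal to left distributivity is avoidable, since \eqref{rr:1} yields the same simplification. So your argument is not only valid but slightly more general (and arguably cleaner) than the one in the paper; the hypothesis matters only for how the lemma is subsequently used, e.g.\ to rewrite $\hat\tau_y$ as $\sigma_y^{-1}$ via Theorem~\ref{thm:dislbilred}(iv).
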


\begin{proof}
For $x,y\in X$ we have
\begin{align*}
&\tau_y\stackrel{\eqref{rr:2}}=\tau_{\tau_{\hat{\tau}_y(x)}\hat{\sigma}_x(y)}\stackrel{\eqref{birack:3}}=\tau_{\hat{\tau}_y(x)}\tau_{\hat{\sigma}_x(y)}\tau^{-1}_{\sigma_{\hat{\sigma}_x(y)}\hat{\tau}_y(x)}
\stackrel{\text{Th.~\ref{thm:dislbilred}}}=
\tau_{\hat{\tau}_y(x)}\tau_{\hat{\sigma}_x(y)}\tau^{-1}_{\sigma_y\sigma^{-1}_y(x)}=
\tau_{\hat{\tau}_y(x)}\tau_{\hat{\sigma}_x(y)}\tau^{-1}_x.\qedhere
\end{align*}
\end{proof}

We have discussed in Section~\ref{sec:prelim} that the equivalences $\sim$ and $\backsim$ are not congruences in general. Nevertheless, they have to be congruences of distributive solutions:

\begin{theorem}\cite[Theorem 3.4]{JPZ20a}
 Let $(X,\sigma,\tau)$ be a solution.
 If $(X,\sigma,\tau)$ is left distributive then $\sim$ is its congruence. If $(X,\sigma,\tau)$ is right distributive then $\backsim$ is its congruence.
\end{theorem}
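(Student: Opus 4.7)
The plan is to use the characterizations of left and right distributivity collected in Theorems~\ref{thm:dislbilred} and~\ref{thm:disrbilred}, which supply strong control over how the translations $\sigma$ and $\tau$ interact with the relation $\sim$ (respectively $\backsim$). Since the two halves of the statement are symmetric, I focus on the left distributive case; the proof for $\backsim$ in the right distributive case is verbatim with $\sigma$ and $\tau$ interchanged and appeals to Theorem~\ref{thm:disrbilred} instead.

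First I would extract two clean identities from left distributivity. Item (ii) of Theorem~\ref{thm:dislbilred} gives directly
\[
\sigma_{\tau_x(y)}=\sigma_y,
\]
and replacing $y$ by $\tau_x^{-1}(y)$ yields $\sigma_{\tau_x^{-1}(y)}=\sigma_y$ as well. Second, combining the braid identity \eqref{birack:1} with left distributivity \eqref{eq:left} gives
\[
\sigma_{\sigma_x(y)}\sigma_x=\sigma_x\sigma_y\quad\Longrightarrow\quad \sigma_{\sigma_x(y)}=\sigma_x\sigma_y\sigma_x^{-1},
\]
and substituting $\sigma_x^{-1}(y)$ for $y$ gives $\sigma_{\sigma_x^{-1}(y)}=\sigma_x^{-1}\sigma_y\sigma_x$. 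These four formulas tell us exactly how the $\sim$-class of an image $\sigma_x^{\pm 1}(y)$ or $\tau_x^{\pm 1}(y)$ depends on the $\sim$-classes of $x$ and $y$.

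Next I would verify the four conditions in the definition \eqref{congr} of a congruence for $\mathord{\sim}$. Assume $x_1\sim x_2$ and $y_1\sim y_2$, i.e.\ $\sigma_{x_1}=\sigma_{x_2}$ and $\sigma_{y_1}=\sigma_{y_2}$. For the $\tau^{\varepsilon}$ cases the identities $\sigma_{\tau_x(y)}=\sigma_y=\sigma_{\tau_x^{-1}(y)}$ instantly yield
\[
\sigma_{\tau_{x_1}^{\varepsilon}(y_1)}=\sigma_{y_1}=\sigma_{y_2}=\sigma_{\tau_{x_2}^{\varepsilon}(y_2)},
\]
so $\tau_{x_1}^{\varepsilon}(y_1)\sim \tau_{x_2}^{\varepsilon}(y_2)$ for $\varepsilon=\pm 1$. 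For the $\sigma^{\varepsilon}$ cases, since $\sigma_{x_1}=\sigma_{x_2}$ we have $\sigma_{x_1}^{\varepsilon}(y_i)=\sigma_{x_2}^{\varepsilon}(y_i)$ literally, so it suffices to check $\sigma_{x_2}^{\varepsilon}(y_1)\sim \sigma_{x_2}^{\varepsilon}(y_2)$. Using the conjugation formulas above,
\[
\sigma_{\sigma_{x_2}^{\varepsilon}(y_1)}=\sigma_{x_2}^{\varepsilon}\,\sigma_{y_1}\,\sigma_{x_2}^{-\varepsilon}=\sigma_{x_2}^{\varepsilon}\,\sigma_{y_2}\,\sigma_{x_2}^{-\varepsilon}=\sigma_{\sigma_{x_2}^{\varepsilon}(y_2)},
\]
which is exactly what we need.

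There is no real obstacle here; the whole argument is a short bookkeeping exercise once the two key identities $\sigma_{\tau_x(y)}=\sigma_y$ and $\sigma_{\sigma_x(y)}=\sigma_x\sigma_y\sigma_x^{-1}$ are in hand. The only point worth highlighting is the symmetry: applying the dual reasoning to a right distributive solution, using $\tau_{\sigma_x(y)}=\tau_y$ (from Theorem~\ref{thm:disrbilred}(ii)) and $\tau_{\tau_x(y)}=\tau_x\tau_y\tau_x^{-1}$ (from \eqref{birack:3} and right distributivity), establishes the corresponding four conditions for $\mathord{\backsim}$ and completes the proof.
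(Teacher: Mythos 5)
Your argument is correct. Note that this paper does not actually prove the statement itself --- it is quoted from \cite[Theorem 3.4]{JPZ20a} --- so there is no internal proof to compare with; your direct verification of the four conditions in \eqref{congr}, using $\sigma_{\tau_x^{\pm1}(y)}=\sigma_y$ (Theorem~\ref{thm:dislbilred}(ii), equivalently \eqref{eq:disl}) for the $\tau^{\varepsilon}$ cases and the conjugation identity $\sigma_{\sigma_x^{\pm1}(y)}=\sigma_x^{\pm1}\sigma_y\sigma_x^{\mp1}$ for the $\sigma^{\varepsilon}$ cases, is a complete and self-contained proof, and the mirror argument for $\backsim$ goes through exactly as you say. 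One cosmetic remark: the identity $\sigma_{\sigma_x(y)}=\sigma_x\sigma_y\sigma_x^{-1}$ is just \eqref{eq:left} rearranged, so the appeal to \eqref{birack:1} there is superfluous (it is, however, what one would use to derive $\sigma_{\tau_y(x)}=\sigma_x$ from \eqref{eq:left} if one wanted to avoid citing Theorem~\ref{thm:dislbilred}).
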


For bi-skew left braces, we have a
left-sided
analogy to Corollary~\ref{cor:RetSoc}.

\begin{proposition}\label{prop:Ker_lambda}
 Let $(B,\cdot,\circ)$ be a bi-skew left brace and $(B,\lambda,\rho)$ its associated solution. Then 
 \begin{itemize}
  \item[(i)] $\Ker\lambda$ is an ideal of~$(B,\cdot,\circ)$, 
   \item[(ii)] $\mathrm{LRet}(B,\lambda,\rho)$ is the solution associated to $(B,\cdot,\circ)/\Ker\lambda$.
 \end{itemize}
\end{proposition}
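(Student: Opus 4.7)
The plan is to exploit the double (anti-)homomorphism structure of $\lambda$ available in a bi-skew left brace: by Proposition~\ref{prop:lambda_hom}, $\lambda\colon(B,\circ)\to\Aut(B,\cdot)$ is a homomorphism, while by Lemma~\ref{lem:Car}, $\lambda\colon(B,\cdot)\to\Aut(B,\cdot)$ is an anti-homomorphism.

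For part~(i), $\Ker\lambda$ is automatically a normal subgroup of both $(B,\cdot)$ and $(B,\circ)$ as the kernel of a group (anti-)homomorphism. It remains to verify the third ideal condition, $\lambda_a(\Ker\lambda)\subseteq\Ker\lambda$ for every $a\in B$. Starting from the defining identity $a\circ b=a\cdot\lambda_a(b)$, I would apply $\lambda$ on both sides: the homomorphism property on $(B,\circ)$ gives $\lambda_{a\circ b}=\lambda_a\lambda_b$, whereas the anti-homomorphism property on $(B,\cdot)$ gives $\lambda_{a\cdot\lambda_a(b)}=\lambda_{\lambda_a(b)}\lambda_a$. Equating these yields the conjugation formula $\lambda_{\lambda_a(b)}=\lambda_a\lambda_b\lambda_a^{-1}$, so $b\in\Ker\lambda$ forces $\lambda_a(b)\in\Ker\lambda$ immediately.

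For part~(ii), Corollary~\ref{corol:left_dist} guarantees that the associated solution $(B,\lambda,\rho)$ is left distributive, so by \cite[Theorem~3.4]{JPZ20} the relation $\sim$ is a congruence and $\mathrm{LRet}(B,\lambda,\rho)$ is well defined. I would then identify the $\sim$-classes with the $\cdot$-cosets of $\Ker\lambda$: using the anti-homomorphism property once more,
\[
x\sim y\ \Longleftrightarrow\ \lambda_x=\lambda_y\ \Longleftrightarrow\ \lambda_{y^{-1}\cdot x}=\id\ \Longleftrightarrow\ y^{-1}\cdot x\in\Ker\lambda.
\]
By Lemma~\ref{lem:cosets} these cosets coincide with the $\circ$-cosets, so the quotient skew left brace $B/\Ker\lambda$ and its associated solution are well defined. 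Using the explicit formula $\rho_y(x)=\overline{\lambda_x(y)}\circ x\circ y$ from Theorem~\ref{Th:GV}, a direct check on representatives shows that the maps $\lambda$ and $\rho$ of the solution associated to $B/\Ker\lambda$ match those induced on $\mathrm{LRet}(B,\lambda,\rho)$.

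The main technical point is obtaining the conjugation formula in part~(i) from the two (anti-)homomorphism identities of $\lambda$; everything that follows reduces to a routine coset-level computation combined with Lemma~\ref{lem:cosets}.
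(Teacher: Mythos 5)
Your proof is correct and takes essentially the same route as the paper: normality of $\Ker\lambda$ in $(B,\circ)$ comes from Proposition~\ref{prop:lambda_hom}, the $\lambda$-invariance comes from the conjugation identity $\lambda_{\lambda_a(b)}=\lambda_a\lambda_b\lambda_a^{-1}$ (the paper quotes this as left distributivity \eqref{eq:left}, which is the same identity for the associated solution of a bi-skew brace), and part~(ii) is the identification of the $\sim$-classes with the cosets of $\Ker\lambda$. The only difference is cosmetic: you obtain closure and normality of $\Ker\lambda$ in $(B,\cdot)$ at once from the anti-homomorphism property of Lemma~\ref{lem:Car}, whereas the paper verifies these by explicit element computations with the brace identities.
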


\begin{proof}
 Let $a,b\in \Ker\lambda$ and $x,z\in B$. Then $\lambda_a(x)=x$ which means $a\cdot x=a\circ x$ and $a^{-1}=\bar a$.
 
(i) At first we will show that $\Ker\lambda$ is an ideal of a skew left brace $(B,\cdot,\circ)$.   \begin{itemize}
 \item $\Ker\lambda$ is a normal subgroup of~$(B,\circ)$ since
 $\lambda$ is a homomorphism $(B,\circ)\to\Aut(B,\cdot)$;
  \item $\lambda_{a\cdot b}=\lambda_{a\circ b}=\lambda_a\lambda_b=\mathrm{id}$ which proves that $\Ker\lambda$ is a subgroup of $(B,\cdot)$;
  \item $\lambda_{x^{-1}\cdot a\cdot x}(z)=
  (x^{-1}\cdot a\cdot x)^{-1}\cdot((x^{-1}\cdot a\cdot x)\circ z)
  =x^{-1}\cdot a^{-1}\cdot x\cdot((x^{-1}\cdot a\cdot x)\circ z)=\strut$\newline
  $x^{-1}\cdot \bar a\cdot ((a\cdot x)\circ \bar x\circ (x\cdot z))=x^{-1}\cdot (\bar a\circ a\circ x\circ \bar x\circ (x\cdot z))=x^{-1}\cdot x\cdot z=z$
  which proves that $\Ker\lambda$ is a normal subgroup of~$(B,\cdot)$;  
  \item $\lambda_{\lambda_x(a)}\stackrel{\eqref{eq:left}}{=}\lambda_x\lambda_a\lambda_x^{-1}=\mathrm{id}$ and therefore $\lambda_x(\Ker\lambda)\subseteq \Ker\lambda$, for all $x\in X$.
 \end{itemize}

(ii) Now we want to prove that the cosets of $\Ker\lambda$ coincide with the classes of~$\sim$. But
$$x\circ \bar y\in\Ker\lambda\quad \Leftrightarrow\quad
\lambda_{x\circ\bar y}=\mathrm{id}\quad \Leftrightarrow\quad \lambda_x=\lambda_y
\quad \Leftrightarrow\quad x\sim y. \qquad \qedhere$$
\end{proof}

The mirror version of Proposition~\ref{prop:Ker_lambda} is not
that straightforward since $\rho_a$ 
may behave differently than $\lambda_a$,
for instance, it is
not an automorphism for general skew left braces.

\begin{lemma}\label{lm:tau_right}
 Let $(B,\cdot,\circ)$ be a skew left brace 
 and let $(B,\lambda,\rho)$ be its associated solution. 
 If the solution $(B,\lambda,\rho)$ is right distributive then 
 \begin{itemize}
  \item[(i)] $\rho_y(x)=\bar{y}\circ(x\cdot y)$, for all $x,y\in B$,
  \item[(ii)]  $\rho_x$ is an anti-automorphism of $(B,\cdot)$, for all~$x\in B$.  
 \end{itemize}
\end{lemma}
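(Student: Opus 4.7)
The plan is to derive both parts from the equivalent characterizations of right distributivity collected in Theorem \ref{thm:disrbilred}, combined with Theorem \ref{thm:Koch}, which describes the inverse solution as the one associated to the opposite skew left brace.

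For part (i), I would start from Theorem \ref{thm:disrbilred}(iv), which says right distributivity is equivalent to $\hat\lambda_y=\rho_y^{-1}$, and from Theorem \ref{thm:Koch}, which gives $\hat\lambda_y(x)=(y\circ x)\cdot y^{-1}$. Inverting this relation is purely algebraic: if $u=\hat\lambda_y(x)$, then $u\cdot y=y\circ x$, hence $x=\bar y\circ(u\cdot y)$, so $\rho_y(u)=\bar y\circ(u\cdot y)$, which is the claimed formula.

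For part (ii), the cleanest route goes through Corollary \ref{cor:opbi}: right distributivity of $(B,\lambda,\rho)$ is equivalent to $(B,\cdot_{op},\circ)$ being a bi-skew left brace, hence in particular a skew left brace. By Remark \ref{prop:lambda-hom} applied to this skew left brace, its $\lambda$-map is an automorphism of $(B,\cdot_{op})$. A short verification, using that the $\cdot_{op}$-inverse of $a$ coincides with $a^{-1}$, identifies this $\lambda$-map with $\hat\lambda_x$ from Theorem \ref{thm:Koch}. Thus $\hat\lambda_x$ is an automorphism of $(B,\cdot_{op})$, equivalently an anti-automorphism of $(B,\cdot)$. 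Since $\rho_x=\hat\lambda_x^{-1}$ by Theorem \ref{thm:disrbilred}(iv), and the inverse of an anti-automorphism is again an anti-automorphism, (ii) follows.

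I do not anticipate a major obstacle: both parts reduce to careful bookkeeping with the two operations and their inverses, and to recognising the opposite brace inside the setup. The only real subtlety is correctly identifying the $\lambda$-map of the opposite brace with $\hat\lambda$; once this is in place the rest is a chain of simple equivalences. A more hands-on proof of (ii) is also possible, computing $\rho_x(a\cdot b)$ via (i) as $\bar x\circ(a\cdot b\cdot x)$ and expanding with~\eqref{lsb}, but this route is more computational and less illuminating than the structural argument sketched above.
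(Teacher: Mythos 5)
Your argument for (i) is correct and is in substance the paper's own proof: both amount to noting $\rho_y^{-1}=\hat\lambda_y$ (Theorem~\ref{thm:disrbilred}) with $\hat\lambda_y(x)=(y\circ x)\cdot y^{-1}$ (Theorem~\ref{thm:Koch}) and checking that $x\mapsto\bar y\circ(x\cdot y)$ is the inverse map.

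In (ii), however, the step ``$\hat\lambda_x$ is an automorphism of $(B,\cdot_{op})$, equivalently an anti-automorphism of $(B,\cdot)$'' is a genuine error. An automorphism of the opposite group is the \emph{same} thing as an automorphism of the group: a bijection $f$ satisfies $f(a\cdot_{op}b)=f(a)\cdot_{op}f(b)$ for all $a,b$ iff $f(b\cdot a)=f(b)\cdot f(a)$ for all $a,b$, which after renaming the variables is exactly $f\in\Aut(B,\cdot)$; an anti-automorphism of $(B,\cdot)$ would instead be an isomorphism between $(B,\cdot)$ and $(B,\cdot_{op})$, a different condition. So your chain, carried out correctly, shows that $\hat\lambda_x$, and hence $\rho_x=\hat\lambda_x^{-1}$, is an \emph{automorphism} of $(B,\cdot)$ --- and this is precisely what the paper's proof concludes, via $\rho_x^{-1}(a\cdot b)=\hat\lambda_x(b\cdot_{op}a)=\hat\lambda_x(b)\cdot_{op}\hat\lambda_x(a)=\rho_x^{-1}(a)\cdot\rho_x^{-1}(b)$. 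The word ``anti-automorphism'' in the statement is a slip of the paper: using (i) and \eqref{lsb} one gets $\rho_y(a\cdot b)=\bar y\circ(a\cdot b\cdot y)=\rho_y(a)\cdot\rho_y(b)$, and already the trivial skew left brace $(B,\cdot,\cdot)$ on a non-abelian group yields a right distributive solution with $\rho_b(a)=b^{-1}\cdot a\cdot b$, which is an automorphism but not an anti-automorphism. The hands-on computation you mention at the end would have exposed this; as written, your text ``proves'' the literal wording only through the incorrect identification of $\Aut(B,\cdot_{op})$ with anti-automorphisms. A minor further remark: the detour through Corollary~\ref{cor:opbi} is not needed, since $(B,\cdot_{op},\circ)$ is a skew left brace for \emph{every} skew left brace $(B,\cdot,\circ)$, so $\hat\lambda_x\in\Aut(B,\cdot)$ holds unconditionally; right distributivity enters only through $\rho_x=\hat\lambda_x^{-1}$.
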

\begin{proof}
Let $x,y\in B$. According to~Theorem~\ref{thm:disrbilred}, $\rho^{-1}_y=\hat\lambda_y$.

(i) By Theorem  \ref{thm:Koch}  we have
\[
\rho^{-1}_y(x)=\hat{\lambda}_y(x)=(y\circ x)\cdot y^{-1}.
\]
It is easy to verify that
\[
\rho_y^{-1}(\bar{y}\circ (x\cdot y))=
(y\circ\bar{y}\circ (x\cdot y))\cdot y^{-1}
=x\quad {\rm and}\quad \rho_y((y\circ x)\cdot y^{-1})=\bar{y}\circ ((y\circ x)\cdot y^{-1}\cdot y)=x,
\]
which completes the proof.
 
 (ii) 
 The mapping~$\hat\lambda$ is a homomorphism $(B,\circ)\to\Aut(B,\cdot_{op})$, according to Theorem~\ref{thm:Koch} and Proposition \ref{prop:lambda_hom}. Therefore, for all $a,b,x\in B$,
 \[ \rho_x^{-1}(a\cdot b)=\hat\lambda_x(b\cdot_{op} a)
  =\hat\lambda_x(b)\cdot_{op}\hat\lambda_x(a)=\rho_x^{-1}(a)\cdot\rho_x^{-1}(b).
 \]
 and $\rho_x$ is an automorphism of~$(B,\cdot)$.
 
\end{proof}

\begin{theorem}\label{thm:ker_tau}
 Let $(B,\cdot,\circ)$ be a skew left brace, 
$(B,\lambda,\rho)$ be its associated solution and $(B,\cdot_{op},\circ)$ be a bi-skew left brace. 
 Then 
 \begin{itemize}
  \item[(i)] $\Ker\rho$ is an ideal of $(B,\cdot,\circ)$,
  \item[(ii)] $\mathrm{RRet}(B,\cdot,\circ)$ is the solution
  associated to $(B,\cdot,\circ)/\Ker\rho$.   
 \end{itemize}
\end{theorem}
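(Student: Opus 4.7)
The plan is to follow the template of Proposition~\ref{prop:Ker_lambda} with $\rho$ in the role of $\lambda$. The bi-skew hypothesis on $(B,\cdot_{op},\circ)$ converts, through Corollary~\ref{cor:opbi} combined with Theorem~\ref{thm:disrbilred}, into the right distributivity of the associated solution $(B,\lambda,\rho)$, and in particular into the identities $\rho_{\lambda_x(y)}=\rho_y$ (identity~\eqref{eq:red4}) and $\rho_x^{-1}=\hat\lambda_x$, for all $x\in B$.

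The first step I would carry out is to derive the anti-homomorphism formula
\begin{align*}
\rho_{a\cdot b}=\rho_b\rho_a,\qquad\text{for all }a,b\in B.
\end{align*}
Applying Lemma~\ref{lem:Car} to the bi-skew left brace $(B,\cdot_{op},\circ)$ shows that its $\lambda$-function is an anti-homomorphism with respect to~$\cdot_{op}$. By Theorem~\ref{thm:Koch} this $\lambda$-function is precisely~$\hat\lambda$, so unpacking $\cdot_{op}$ gives $\hat\lambda_{a\cdot b}=\hat\lambda_a\hat\lambda_b$; taking inverses and using $\rho_x=\hat\lambda_x^{-1}$ then delivers the displayed identity.

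With this in hand, the four defining conditions for $\Ker\rho$ to be an ideal mirror those verified in the proof of Proposition~\ref{prop:Ker_lambda}. The subgroup and normality properties in $(B,\circ)$ follow from Proposition~\ref{prop:tau_hom} ($\rho_{a\circ b}=\rho_b\rho_a$); the subgroup and normality properties in $(B,\cdot)$ follow from the anti-homomorphism just established; and $\lambda_a(\Ker\rho)\subseteq\Ker\rho$ is immediate from~\eqref{eq:red4}, since $\rho_{\lambda_a(b)}=\rho_b=\mathrm{id}$ whenever $b\in\Ker\rho$. For part~(ii) I would identify the classes of~$\backsim$ with the cosets of~$\Ker\rho$ via
\begin{align*}
\rho_x=\rho_y\ \Longleftrightarrow\ \rho_{\bar y\circ x}=\rho_x\rho_y^{-1}=\mathrm{id}\ \Longleftrightarrow\ \bar y\circ x\in\Ker\rho;
\end{align*}
Lemma~\ref{lem:cosets} then equates these $\circ$-cosets with the $\cdot$-cosets, and Definition~\ref{ret} identifies the resulting quotient solution with the one associated to the quotient skew left brace $(B,\cdot,\circ)/\Ker\rho$.

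The main obstacle is the first step: correctly routing the bi-skew property of the opposite brace back to an identity about~$\rho$ inside the original skew brace. This requires carefully chasing the identifications between $\lambda$, $\hat\lambda$, the $\lambda$-function of $(B,\cdot_{op},\circ)$, and $\rho^{-1}$, through Theorems~\ref{thm:Koch} and~\ref{thm:disrbilred} together with Lemma~\ref{lem:Car}. Once the anti-homomorphism $\rho_{a\cdot b}=\rho_b\rho_a$ is secured, every subsequent verification reduces to a short computation directly analogous to one already present in the proof of Proposition~\ref{prop:Ker_lambda}.
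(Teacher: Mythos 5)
Your proposal is correct and takes essentially the same approach as the paper: both routes pass through Corollary~\ref{cor:opbi} and Theorem~\ref{thm:disrbilred} to get right distributivity and $\rho_x=\hat\lambda_x^{-1}$, obtain the $\lambda$-invariance of $\Ker\rho$ from $\rho_{\lambda_x(y)}=\rho_y$, and prove (ii) by identifying the $\backsim$-classes with the cosets of $\Ker\rho$. The only (cosmetic) difference is in part (i): where you re-derive the anti-homomorphism $\rho_{a\cdot b}=\rho_b\rho_a$ via Lemma~\ref{lem:Car} and check the subgroup and normality conditions by hand, the paper simply applies Proposition~\ref{prop:Ker_lambda} to the bi-skew left brace $(B,\cdot_{op},\circ)$, whose $\lambda$-map is $\hat\lambda$, to conclude at once that $\Ker\rho=\Ker\hat\lambda$ is normal in both $(B,\cdot)$ and $(B,\circ)$.
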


\begin{proof}
Let $x,y\in B$. By Corollary \ref{cor:opbi} and Theorem \ref{thm:disrbilred} the solution $(B,\lambda,\rho)$ is right distributive and $\rho_x=\hat\lambda_x^{-1}$ for all $x\in B$. Hence

 (i) $\Ker\rho=\Ker\hat\lambda^{-1}$ and this is a normal subgroup of both $(B,\cdot)$ as well as $(B,\circ)$, according to Proposition~\ref{prop:Ker_lambda}. Now
 $\rho_{\lambda_x(y)}=\rho_y$, according to Theorem~\ref{thm:disrbilred} (ii) and this means that
 $\lambda_x(\Ker\rho)\subseteq \Ker\rho$, for all $x\in B$.

 (ii) $x\circ \bar y\in\Ker\rho\quad \Leftrightarrow\quad
\rho_{x\circ\bar y}=\mathrm{id}\quad \Leftrightarrow\quad \rho_x=\rho_y
\quad \Leftrightarrow\quad x\backsim y$.
\end{proof}

\begin{example}
Consider the bi-skew left brace $(\Z_{2n},\cdot,+_{2n})$
defined in Example \ref{exm:Z2n}. Let $(\Z_{2n},\lambda,\rho)$ be its associated left distributive solution.
The mapping~$\rho$ is an anti-homomorphism of the groups $(\Z_{2n},\cdot)$ and $S_{\Z_{2n}}$,
according to Proposition~\ref{prop:tau_hom} but
$\Ker\rho=\{0,n\}$ is not a normal subgroup since
$1^{-1}\cdot n\cdot 1=2-n\notin \{0,n\}$.\newline

The equivalence~$\sim$ is a congruence, namely,
for $x,y\in \Z_{2n}$,
\begin{align*}
x\sim y\quad \Leftrightarrow\quad \forall(a\in \Z_{2n})\; \lambda_x(a)=\lambda_y(a)\quad \Leftrightarrow\\
& \forall(a\in \Z_{2n})\; (-1)^{x}a=(-1)^ya\quad \Leftrightarrow\quad x\equiv y\pmod 2.
\end{align*}
The quotient solution $(\Z_{2n}^{\sim},\lambda,\rho)$ is a two element trivial solution.
On the other hand
\begin{multline*}
x\backsim y\quad \Leftrightarrow\quad \forall(a\in \Z_{2n})\; \rho_x(a)=\rho_y(a)\quad \Leftrightarrow\\
 \forall(a\in \Z_{2n})\; (-1)^{a+1}x+x+a=(-1)^{a+1}y+y+a\quad \Leftrightarrow\quad \\
\forall(a\in \Z_{2n})\; x(1+(-1)^{a+1})=y(1+(-1)^{a+1})\quad \Leftrightarrow\quad
2x=2y \quad \Leftrightarrow\quad
x\equiv y\pmod n.
\end{multline*}
But $\lambda_0(1)=1$ and  $\lambda_n(1)=-1$ are not $\backsim$ related, hence the induced solution is not defined on the quotient $\Z_{2n}^{\backsim}$.
Finally,
\begin{align*}
&x\mathrel{\approx} y\quad \Leftrightarrow\quad
x\sim y \; \wedge\; x\backsim y \quad \Leftrightarrow\quad x\equiv y\pmod 2 \; \wedge\; x\equiv y\pmod n
\quad \Leftrightarrow\quad x=y,
\end{align*} 
which means that the solution $(\Z_{2n},\lambda,\rho)$ is irretractable. It corresponds to the fact $\Soc(\Z_{2n})=\{0\}$.
\end{example}

\begin{example}
Let $(G,\cdot)$ be a non-abelian group and let $(G\times G,\lambda,\rho)$ be the solution defined in Example \ref{exm:prodsol}. 
This solution is clearly distributive and therefore all the three equivalences $\sim$, $\backsim$ and $\approx$ are congruences.
In particular, for $(x,y),(a,b)\in G\times G$ 
\begin{multline*}
(x,y)\sim (a,b)\quad \Leftrightarrow\quad \forall((u,w)\in G\times G)\; \lambda_{(x,y)}((u,w))=\lambda_{(a,b)}((u,w))\quad \Leftrightarrow\\
 \forall(w\in G)\; y^{-1}\cdot w \cdot y=b^{-1} \cdot w \cdot  b\quad \Leftrightarrow\quad y \cdot b^{-1}\in Z(G).
\end{multline*}
Analogously
$(x,y)\backsim (a,b)\ \Leftrightarrow\  x \cdot a^{-1}\in Z(G)$.
Combining, we obtain
\begin{align*}
&(x,y)\mathrel{\approx} (a,b)\quad \Leftrightarrow\quad
(x,y)\sim (a,b) \; \wedge\; (x,y)\backsim (a,b) \quad \Leftrightarrow\quad x \cdot a^{-1}, y \cdot b^{-1}\in Z(G).
\end{align*} 
In the case of $Z(G)$ being trivial, the solution $(G\times G,\lambda,\rho)$ is irretractable.
\end{example}

\section{2-reductive skew left braces}\label{sec:last}

In this section we analyze which skew left braces
yield $2$-reductive solutions. Since these solutions
are distributive, we can specialize all the results of the previous section.
The first result tells how the identities of $2$-reductivity translate to properties of skew left braces.

\begin{proposition}\label{prop:4ekv2red}
Let $(B,\cdot,\circ)$ be a skew left brace and $(B,\lambda,\rho)$ its associated solution. Then
\begin{enumerate}
\item [(i)] $(B,\lambda,\rho)$ satisfies \eqref{eq:red1}, that means $\lambda_{\lambda_x(y)}=\lambda_y$, for all $x,y\in B$, if and only if the mapping $\lambda\colon B\to  B; \; a\mapsto \lambda_a
$
is a homomorphism of groups $(B,\cdot)$ and $\aut{B,\cdot}$,
\item [(ii)] $(B,\lambda,\rho)$ satisfies \eqref{eq:red2}, that means $\rho_{\rho_x(y)}=\rho_y$, for all $x,y\in B$, if and only if the mapping $\rho\colon B\to B; \; a\mapsto \rho_a
$
is a homomorphism of groups $(B,\cdot)$ and $S_B$,
\item [(iii)] $(B,\lambda,\rho)$ satisfies \eqref{eq:red3}, that means $\lambda_{\rho_x(y)}=\lambda_y$, for all $x,y\in B$, if and only if the mapping $\lambda\colon B\to B; \; a\mapsto \lambda_a
$
is an anti-homomorphism of groups $(B,\cdot)$ and $\aut{B,\cdot}$,
\item [(iv)] $(B,\lambda,\rho)$ satisfies \eqref{eq:red4}, that means $\rho_{\lambda_x(y)}=\rho_y$, for all $x,y\in B$, if and only if the mapping $\rho\colon B\to B; \; a\mapsto \rho_a
$
is an anti-homomorphism of groups $(B,\cdot)$ and $\aut{B,\cdot}$,
\end{enumerate}
\end{proposition}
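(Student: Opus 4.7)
The plan is to reduce each of the four statements to a direct algebraic manipulation based on three known identities: the bridge $a\circ b=a\cdot\lambda_a(b)$ (Remark~\ref{prop:lambda-hom}), which we rewrite as $a\cdot b=a\circ\lambda_a^{-1}(b)$; the homomorphism law $\lambda_{a\circ b}=\lambda_a\lambda_b$ of Proposition~\ref{prop:lambda_hom}; and the anti-homomorphism law $\rho_{a\circ b}=\rho_b\rho_a$ of Proposition~\ref{prop:tau_hom}.

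Items (i) and (iv) fall out immediately. Applying these homomorphism laws to $a\cdot b=a\circ\lambda_a^{-1}(b)$ yields
\[
\lambda_{a\cdot b}=\lambda_a\,\lambda_{\lambda_a^{-1}(b)}\qquad\text{and}\qquad \rho_{a\cdot b}=\rho_{\lambda_a^{-1}(b)}\,\rho_a.
\]
Hence $\lambda_{a\cdot b}=\lambda_a\lambda_b$ is equivalent to $\lambda_{\lambda_a^{-1}(b)}=\lambda_b$ for all $a,b\in B$, which after substituting $b\mapsto\lambda_a(b)$ is exactly \eqref{eq:red1}; likewise $\rho_{a\cdot b}=\rho_b\rho_a$ is equivalent to $\rho_{\lambda_a^{-1}(b)}=\rho_b$, that is, to \eqref{eq:red4}.

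For (iii) I would simply chain three earlier results of the paper: condition \eqref{eq:red3} is left distributivity of the associated solution by Theorem~\ref{thm:dislbilred}; left distributivity is equivalent to $(B,\cdot,\circ)$ being a bi-skew left brace by Corollary~\ref{corol:left_dist}; and the latter is equivalent to $\lambda$ being an anti-homomorphism from $(B,\cdot)$ to $\aut{B,\cdot}$ by Lemma~\ref{lem:Car}. Alternatively one could redo this directly, using the formula $\lambda_{\lambda_a(b)}=\lambda_a\lambda_b\lambda_a^{-1}$ that arises from the anti-homomorphism property combined with the bridge identity.

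Item (ii) is the subtlest, and I expect it to be the main obstacle. The formula $\rho_{a\cdot b}=\rho_{\lambda_a^{-1}(b)}\rho_a$ alone does not directly expose \eqref{eq:red2}, because $\rho$ on the right-hand side is twisted by~$\lambda$. To get around this I would supplement the bridge identity with the \emph{mixed} identity $a\circ b=\lambda_a(b)\circ\rho_b(a)$ (which encodes that $(\lambda,\rho)$ yields a solution). Applying $\rho_{-\circ-}=\rho_-\rho_-$ (reversed) to the two presentations of $a\circ b$ gives
\[
\rho_{\rho_b(a)}=\rho_b\,\rho_a\,\rho_{\lambda_a(b)}^{-1}.
\]
From this formula both \eqref{eq:red2} and the homomorphism property $\rho_{a\cdot b}=\rho_a\rho_b$ reduce to the same conjugation rule $\rho_{\lambda_a(b)}=\rho_a^{-1}\rho_b\rho_a$ (for the second condition one uses $\rho_{a\cdot b}=\rho_{\lambda_a^{-1}(b)}\rho_a$ and replaces $b$ by $\lambda_a(b)$), and this common reduction yields the equivalence in both directions.
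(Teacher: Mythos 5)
Your argument is essentially the paper's own proof. The paper also rewrites $\lambda_a^{-1}(b)=\bar a\circ(a\cdot b)$ and plays the $\circ$-homomorphism law for $\lambda$ and the $\circ$-anti-homomorphism law for $\rho$ against the formula $\rho_b(a)=\overline{\lambda_a(b)}\circ a\circ b$ (which is just your mixed identity $a\circ b=\lambda_a(b)\circ\rho_b(a)$ in disguise); in particular its proof of (ii) passes through exactly your pivot $\rho_{\rho_b(a)}=\rho_b\rho_a\rho_{\lambda_a(b)}^{-1}$ and the conjugation rule $\rho_{\lambda_a(b)}=\rho_a^{-1}\rho_b\rho_a$, and for (iii) the paper likewise notes the shortcut through Theorem~\ref{thm:dislbilred}, Corollary~\ref{corol:left_dist} and Lemma~\ref{lem:Car} before giving a direct computation.

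The one genuine omission is in (iv): the statement asserts that $\rho$ is an anti-homomorphism of $(B,\cdot)$ into $\aut{B,\cdot}$, so besides the law $\rho_{a\cdot b}=\rho_b\rho_a$ (which your computation correctly shows is equivalent to \eqref{eq:red4}) the forward direction also requires that each $\rho_a$ be an automorphism of $(B,\cdot)$, and this is not automatic for a skew left brace. One line fixes it: \eqref{eq:red4} is equivalent to right distributivity of $(B,\lambda,\rho)$ by Theorem~\ref{thm:disrbilred}, so $\rho_a^{-1}=\hat\lambda_a$, and by Theorem~\ref{thm:Koch} together with Remark~\ref{prop:lambda-hom} applied to the opposite brace one has $\hat\lambda_a\in\Aut(B,\cdot_{op})=\Aut(B,\cdot)$; this is precisely Lemma~\ref{lm:tau_right}(ii), the reference the paper invokes at the end of its proof of (iv). Nothing analogous is needed in (i)--(iii), since $\lambda_a$ is always an automorphism and item (ii) only asks for a homomorphism into $S_B$.
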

\begin{proof}
By Remark \ref{prop:lambda-hom} and Proposition \ref{prop:tau_hom} we have for $x,y\in B$:
\begin{enumerate}
\item [(i)] $\lambda_{\lambda_x(y)}=\lambda_y\quad \Leftrightarrow\quad \lambda_y=\lambda_{\lambda^{-1}_x(y)}=\lambda_{\bar{x}\circ (x\cdot y)}=\lambda^{-1}_x\lambda_{x\cdot y}\quad \Leftrightarrow\quad \lambda_{x\cdot y}=\lambda_x\lambda_y$;
\item [(ii)] $\rho_{x}=\rho_{\rho_y(x)} 
=\rho_{\overline{\lambda_{x}(y)}\circ x\circ y}=\rho_y\rho_{x}\rho_{\overline{\lambda_{x}(y)}}=\rho_y\rho_{x}\rho^{-1}_{\lambda_{x}(y)}\quad \Leftrightarrow\quad
\rho_y\rho_{x}=\rho_{x}\rho_{\lambda_{x}(y)}
\quad\Leftrightarrow$
\newline 
$\rho_x\rho_y=\rho_{\lambda^{-1}_x(y)}\rho_x
=\rho_{\bar x\circ(x\cdot y)}\rho_x
=\rho_{x\cdot y}\rho^{-1}_x\rho_x
=\rho_{x\cdot y}$;

\item [(iii)] Follows from Lemma \ref{lem:Car} and Theorem \ref{thm:dislbilred} but we give a direct proof anyway since it is analogous to the proof of (ii):
$\lambda_{x}=\lambda_{\tau_y(x)} 
=\lambda_{\overline{\lambda_{x}(y)}\circ x\circ y}
=\lambda^{-1}_{\lambda_{x}(y)}\lambda_x\lambda_y
\quad \Leftrightarrow$\newline
$\lambda_{x}\lambda_y=\lambda_{\lambda_{x}(y)}\lambda_x
\quad\Leftrightarrow\quad
\lambda_y\lambda_x=\lambda_x\lambda_{\lambda^{-1}_x(y)}
=\lambda_x\lambda_{\bar x\circ(x\cdot y)}
=\lambda_{x\cdot y}$;
\item [(iv)] $\rho_{\lambda_x(y)}=\rho_y\quad \Leftrightarrow\quad \rho_y=\rho_{\lambda^{-1}_x(y)}=\rho_{\bar{x}\circ (x\cdot y)}=\rho_{x\cdot y}\rho^{-1}_x\quad \Leftrightarrow\quad \rho_y\rho_x=\rho_{x\cdot y}$
and $\rho_x$ is an automorphism according to Theorem~\ref{thm:ker_tau}.\qedhere
\end{enumerate}
\end{proof}

Skew left braces $(B,\cdot,\circ)$ for which the mapping $\lambda$ is a homomorphism of groups $(B,\cdot)$ and $\aut{B,\cdot}$ were investigated in \cite{BNY} under the name $\lambda$-\emph{homomorphic} skew left braces. In particular, it was shown there (\cite[Theorem 2.12]{BNY}) that any $\lambda$-homomorphic skew left brace is meta-trivial. Hence, by Proposition \ref{prop:4ekv2red}(i) we immediately obtain:
\begin{corollary}
Each skew left brace $(B,\cdot,\circ)$ which satisfies \eqref{eq:red1}, that means $\lambda_{\lambda_x(y)}=\lambda_y$, for all $x,y\in B$, is meta-trivial.
\end{corollary}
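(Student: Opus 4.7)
The plan is to chain together two results that are already at our disposal. First, we invoke Proposition~\ref{prop:4ekv2red}(i), which asserts that Identity~\eqref{eq:red1}, namely $\lambda_{\lambda_x(y)}=\lambda_y$ for all $x,y\in B$, is equivalent to the mapping $\lambda\colon (B,\cdot)\to \aut{B,\cdot}$, $a\mapsto \lambda_a$, being a group homomorphism. By the definition recalled in the paragraph immediately preceding the corollary, this is exactly the condition that $(B,\cdot,\circ)$ be a $\lambda$-homomorphic skew left brace in the sense of Bardakov--Neshchadim--Yadav.

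Once we have identified our hypothesis with $\lambda$-homomorphicity, the conclusion is a direct citation of \cite[Theorem~2.12]{BNY}, which states that every $\lambda$-homomorphic skew left brace is meta-trivial. Thus the whole proof is a two-line argument: translate \eqref{eq:red1} via Proposition~\ref{prop:4ekv2red}(i) into $\lambda$-homomorphicity, then apply the cited theorem.

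There is essentially no obstacle here since all the work has been isolated in the two referenced results; the statement is presented precisely as a corollary that packages them together. If one wished to make the proof self-contained one would have to reproduce the BNY argument, which constructs a trivial ideal $N$ of $(B,\cdot,\circ)$ with trivial quotient $B/N$ from the homomorphism property of $\lambda$, but that is not needed here.
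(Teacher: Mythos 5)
Your proposal is correct and follows exactly the paper's own argument: Proposition~\ref{prop:4ekv2red}(i) identifies the hypothesis \eqref{eq:red1} with $\lambda$-homomorphicity in the sense of \cite{BNY}, and \cite[Theorem 2.12]{BNY} then yields meta-triviality. Nothing is missing.
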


\begin{lemma}
Let $(B,\lambda,\rho)$ be a $2$-reductive solution associated to a skew left brace $(B,\cdot,\circ)$. Then for $x,y\in B$ we have 

\begin{enumerate}
\item [(i)] $y\circ y=y\cdot\overline{y^{-1}}$,
\item [(ii)] $(y^{-1}\circ x)\cdot y=(\bar{y}\circ x)\cdot \bar{y}^{-1}=\rho_y(x)$,
\item [(iii)] $\bar{y}\cdot y=y\cdot \bar{y}$,
\item [(iv)] $y^{-1}\cdot \bar{y}^{-1}=y^{-1}\circ y$,
\item [(iv)] $y\circ y=\overline{y^{-1}}\circ \bar{y}^{-1}$,
\item [(vi)] $\overline{\overline{y}\cdot y}=(\bar{y}\cdot y)^{-1}=y^{-1}\circ y$,
\item [(vii)] $x\circ y\circ \bar{x}\circ\bar{y}=(x\circ y)\cdot(y\circ x)^{-1}$.
\end{enumerate}
\end{lemma}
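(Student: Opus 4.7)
The plan is to first establish two structural facts that reduce all seven identities to short direct calculations. Since $(B,\lambda,\rho)$ is $2$-reductive, Proposition~\ref{prop:4ekv2red} shows that $\lambda$ is simultaneously a homomorphism and an anti-homomorphism $(B,\cdot)\to\Aut(B,\cdot)$, whence $\lambda_a\lambda_b=\lambda_b\lambda_a$ for all $a,b\in B$; combined with Proposition~\ref{prop:lambda_hom} this gives
\[
\lambda_{a\cdot b}=\lambda_a\lambda_b=\lambda_b\lambda_a=\lambda_{b\circ a},
\]
and the analogous chain $\rho_{a\cdot b}=\rho_{b\circ a}$ holds via Proposition~\ref{prop:tau_hom}. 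By Corollary~\ref{cor:RetSoc} this means $a\cdot b\approx b\circ a$ for all $a,b\in B$, i.e.\ $(a\cdot b)\cdot(b\circ a)^{-1}\in\Soc(B)$. Specialising $b=\bar a$ (and $a=\bar b$) shows that both $y\cdot\bar y$ and $\bar y\cdot y$ lie in $\Soc(B)\subseteq Z(B,\cdot)$. A second fact used throughout is $\lambda_{\bar y}=\lambda_y^{-1}=\lambda_{y^{-1}}$, which follows from $\lambda$ being a group homomorphism with respect to both $\circ$ and $\cdot$.

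Identity (iv) then drops out: $y^{-1}\circ y=y^{-1}\cdot\lambda_{y^{-1}}(y)=y^{-1}\cdot\lambda_{\bar y}(y)=y^{-1}\cdot\bar y^{-1}$, since $\bar y\circ y=1$ gives $\lambda_{\bar y}(y)=\bar y^{-1}$. Identity~(iii) is a one-liner: $\bar y\cdot y\in Z(B,\cdot)$ implies $(\bar y\cdot y)\cdot y=y\cdot(\bar y\cdot y)$, and cancelling $y$ on the right yields $\bar y\cdot y=y\cdot\bar y$. For~(vi), the socle property $s\circ z=s\cdot z$ with $s=\bar y\cdot y$ forces $\overline{\bar y\cdot y}=(\bar y\cdot y)^{-1}$, and the second equality reduces to~(iv). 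For~(i), applying $y^{-1}\circ\overline{y^{-1}}=1$ gives $\lambda_{y^{-1}}(\overline{y^{-1}})=y$, hence $\lambda_y(y)=\overline{y^{-1}}$, so $y\circ y=y\cdot\lambda_y(y)=y\cdot\overline{y^{-1}}$. For~(v), a short expansion yields $\overline{y^{-1}}\circ\bar y^{-1}=\overline{y^{-1}}\cdot\lambda_y(\bar y^{-1})=\overline{y^{-1}}\cdot y$ (using $\lambda_{\overline{y^{-1}}}=\lambda_y$ and $\lambda_y(\bar y)=y^{-1}$); applying~(iii) with $y^{-1}$ in place of $y$ shows that $\overline{y^{-1}}$ commutes with $y$, so this equals $y\cdot\overline{y^{-1}}=y\circ y$ by~(i).

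For~(ii), I would exploit that $2$-reductive solutions are right distributive (by \eqref{eq:red4} and Theorem~\ref{thm:disrbilred}), so Lemma~\ref{lm:tau_right}(i) applies and gives
\[
\rho_y(x)=\bar y\circ(x\cdot y)=\bar y\cdot\lambda_{\bar y}(x)\cdot\bar y^{-1}=(\bar y\circ x)\cdot\bar y^{-1}.
\]
The remaining equality $(y^{-1}\circ x)\cdot y=\rho_y(x)$ then follows by rewriting $(y^{-1}\circ x)\cdot y=y^{-1}\cdot\lambda_{\bar y}(x)\cdot y$ and conjugating by the central element $y\cdot\bar y$. The main obstacle is identity~(vii), whose proof rests on the observation that the $\circ$-commutator $[x,y]_\circ:=x\circ y\circ\bar x\circ\bar y$ itself lies in $\Soc(B)$: this follows because $a\circ b\approx b\circ a$ together with Corollary~\ref{cor:RetSoc} gives $(a\circ b)\circ\overline{b\circ a}\in\Soc(B)$. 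Once $[x,y]_\circ\in\Soc(B)$, the socle identity $s\circ z=s\cdot z$ transforms the purely group-theoretic relation $[x,y]_\circ\circ(y\circ x)=x\circ y$ into $[x,y]_\circ\cdot(y\circ x)=x\circ y$, yielding $[x,y]_\circ=(x\circ y)\cdot(y\circ x)^{-1}$.
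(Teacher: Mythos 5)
Your proof is correct, but it is organized around a different key idea than the paper's. You first show that in the $2$-reductive case $\lambda$ and $\rho$ are (anti-)homomorphisms for \emph{both} operations, deduce that $y\cdot\bar y$, $\bar y\cdot y$ and the $\circ$-commutators $x\circ y\circ\bar x\circ\bar y$ all lie in $\Soc(B)$ (since $\approx$-classes are exactly the socle cosets, via Proposition~\ref{prop:SocKer}/Corollary~\ref{cor:RetSoc}), and then read off the identities from the socle property $s\circ z=s\cdot z=z\cdot s$ together with short $\lambda$-computations such as $\lambda_{\bar y}=\lambda_{y^{-1}}=\lambda_y^{-1}$. The paper instead argues identity by identity: (i), (iv), (vi) via Caranti's Lemma~\ref{lem:Car}, (ii) via Lemma~\ref{lm:tau_right} combined with \eqref{eq:red4} and $\overline{y^{-1}}=\lambda_{y^{-1}}^{-1}(y)$ (where you use centrality of $y\cdot\bar y$ and a conjugation argument instead -- both work), and, most notably, (vii) via a longer computation using commutativity of $\LMlt(X)$ and Lemma~\ref{lm:GV1.7} to first establish $x\circ y\circ\bar x=(x\circ y)\cdot(y\circ x)^{-1}\cdot y$ and then substitute. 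Your socle argument for (vii) -- the commutator lies in $\Soc(B)$, so in $x\circ y=(x\circ y\circ\bar x\circ\bar y)\circ(y\circ x)$ the outer $\circ$ may be replaced by $\cdot$ -- is shorter and more conceptual; what the paper's route buys is self-containedness, since it only manipulates the explicit formulas and does not invoke the identification of $\approx$-classes with cosets of the socle. One small point to make explicit if you write this up: your blanket claim ``$(a\cdot b)\cdot(b\circ a)^{-1}\in\Soc(B)$'' mixes the two operations, and strictly you should either pass through $u\approx v\Leftrightarrow u\circ\bar v\in\Soc(B)$ and Lemma~\ref{lem:cosets}, or (as your specializations $b=\bar a$, $a=\bar b$ in fact do) note that the relevant elements are $\approx 1$ and hence lie in $\Ker\lambda\cap\Ker\rho=\Soc(B)$ directly.
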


\begin{proof}
Let $x,y\in B$. \\
(i) By Lemma \ref{lem:Car} we have the following:
\[
y^{-1}\cdot(y\circ y)
 =\lambda_y(y)= 
\lambda_{y^{-1}}^{-1}(y)=\overline{y^{-1}}\circ (y^{-1}\cdot y)=\overline{y^{-1}}\quad \Rightarrow\quad y\circ y=y\cdot\overline{y^{-1}}.
 \]
(ii) By Lemma \ref{lm:tau_right} we obtain
\begin{align}\label{eq:tau}
\rho_y(x)=\bar{y}\circ(x\cdot y)\stackrel{\eqref{lsb}}=(\bar{y}\circ x)\cdot \bar{y}^{-1}.
\end{align}
Hence, by $2$-reductivity
\[
\rho_y(x)\stackrel{\eqref{eq:red4}}{=}\rho_{\lambda_{y^{-1}}^{-1}(y)}(x)=\rho_{\overline{y^{-1}}}(x)=(y^{-1}\circ x)\cdot y.
\]
(iii) Once again by Lemma \ref{lm:tau_right}: 
\[
\overline{y^{-1}}=\overline{y^{-1}}\circ (y\cdot y^{-1})=\rho_{y^{-1}}(y)\stackrel{(\text {ii})}{=}(y\circ y)\cdot y^{-1}
\quad \Rightarrow\quad \overline{y^{-1}}\cdot y=y\circ y\stackrel{(\text{i})}{=}y\cdot \overline{y^{-1}}\quad \Rightarrow\quad \bar{y}\cdot y=y\cdot \bar{y}.
\]
(iv) By Lemma \ref{lem:Car} 
\begin{align*}
&y^{-1}=\lambda_y(\bar{y})
=\lambda_{\bar{y}^{-1}}(\bar{y})=\bar{y}\cdot (\bar{y}^{-1}\circ \bar{y})\quad \Rightarrow\quad \bar{y}^{-1} \cdot y^{-1} =\bar{y}^{-1}\circ \bar{y}\quad \stackrel{y\mapsto\bar{y}}{\Rightarrow}\quad 
y^{-1} \cdot \bar{y}^{-1} =y^{-1}\circ y.
\end{align*}
(v) Conditions (iii) and (iv) imply:
\begin{align*}
&y^{-1}\circ y\stackrel{(\text {iv})}{=}y^{-1}\cdot \bar{y}^{-1}=(\bar{y}\cdot y)^{-1}
\stackrel{(\text {iii})}{=}(y\cdot \bar{y})^{-1}
=\bar{y}^{-1}\cdot y^{-1}\stackrel{(\text {iv})}{=}\bar{y}^{-1}\circ \bar{y}\quad \Rightarrow\\
&
y^{-1}\circ y\circ y=\bar{y}^{-1}\quad \Rightarrow\quad  \overline{y^{-1}}\circ \bar{y}^{-1}=y\circ y.
\end{align*}
(vi) By Lemma \ref{lem:Car} we have:
\begin{align*}
&y\circ(\bar{y}\cdot y)=\lambda_{\bar{y}}^{-1}(y)=\lambda_y(y)=\overline{y^{-1}}\quad \Rightarrow\\
& \bar{y}\cdot y=\bar{y}\circ \overline{y^{-1}}=\overline{y^{-1}\circ y}\quad \stackrel{(\text {iv})}{=}\overline{(\bar{y}\cdot y)^{-1}}\quad \Rightarrow\quad \overline{\overline{y}\cdot y}=(\bar{y}\cdot y)^{-1}\stackrel{(\text {iv})}   {=}y^{-1}\circ y.
\end{align*}
(vii) By $2$-reductivity the group $\LMlt(X)$ is commutative. Then
\begin{align*}
&x^{-1}\cdot(x\circ(y^{-1}(y\circ \bar{x})))=\lambda_x(y^{-1}\cdot(y\circ \bar{x}))= \lambda_x\lambda_y(\bar{x})=\\
&\lambda_y\lambda_x(\bar{x})=\lambda_y(x^{-1}\cdot(x\circ \bar{x}))=\lambda_y(x^{-1})=y^{-1}\cdot(y\circ x^{-1}).
\end{align*}
By Lemma \ref{lm:GV1.7} we have:
\begin{align*}
&x^{-1}\cdot(x\circ(y^{-1}\cdot(y\circ \bar{x})))\stackrel{\eqref{eq:1.7(2)}}{=}x^{-1}\cdot x\cdot(x\circ y)^{-1}\cdot(x\circ y\circ \bar{x})=(x\circ y)^{-1}\cdot(x\circ y\circ \bar{x})
\end{align*}
and
\begin{align*}
&y^{-1}\cdot(y\circ x^{-1})=y^{-1}\cdot(y\circ (1\cdot x^{-1}))\stackrel{\eqref{eq:1.7(3)}}{=}y^{-1}\cdot(y\circ 1)\cdot(y\circ x)^{-1}\cdot y=\\
&y^{-1}\cdot y\cdot(y\circ x)^{-1}\cdot y=(y\circ x)^{-1}\cdot y.
\end{align*}
Hence
\begin{align}
&(x\circ y)^{-1}\cdot(x\circ y\circ \bar{x})=(y\circ x)^{-1}\cdot y\quad \Rightarrow\nonumber\\
& 
x\circ y\circ \bar{x}=(x\circ y)\cdot (y\circ x)^{-1}\cdot y.\label{eq:com}
\end{align}
Substituting $y$ by $\bar{x}$ and $x$ by $y$ in \eqref{eq:com} we complete the proof:
\begin{align*}
&y\circ \bar{x}\circ \bar{y}=(y\circ\bar{x})\cdot(\bar{x}\circ y)^{-1}\cdot\bar{x}\quad \Rightarrow\quad x\circ y\circ \bar{x}\circ \bar{y}=x\circ ((y\circ\bar{x})\cdot(\bar{x}\circ y)^{-1}\cdot\bar{x})\stackrel{\eqref{lsb}}{=}\\
&(x\circ ((y\circ \bar{x})\cdot(\bar{x}\circ y)^{-1}))\cdot x^{-1}\cdot(x\circ\bar{x})\stackrel{\eqref{eq:1.7(3)}}{=}(x\circ y\circ \bar{x})\cdot(x\circ \bar{x}\circ y)^{-1}\cdot x\cdot x^{-1}=\\
&(x\circ y\circ \bar{x})\cdot y^{-1}\stackrel{\eqref{eq:com}}{=}(x\circ y)\cdot (y\circ x)^{-1}\cdot y\cdot y^{-1}=(x\circ y)\cdot (y\circ x)^{-1}. &&\qedhere
\end{align*}
\end{proof}

\begin{theorem}\label{thm:2-red_brace}
 Let $(B,\cdot,\circ)$ be a skew left brace and let $(B,\lambda,\rho)$ be its associated solution. Then 
 the following conditions are equivalent:
 \begin{itemize}
  \item[(i)] $(B,\lambda,\rho)$ is $2$-reductive,  
  \item[(ii)]  $\lambda_{a\cdot b}=\lambda_{b\cdot a}=\lambda_{a\circ b}$ and $\rho_{a\cdot b}=\rho_{b\cdot a}=\rho_{a\circ b}$, for all $a,b\in B$,
  \item[(iii)] $(B,\lambda,\rho)$ is of multipermutation level at most~$2$,

  \item[(iv)] $(B,\cdot,\circ)$ is nilpotent of class at most~$2$,
  \item[(v)] $(B,\cdot_{op},\circ)$ is nilpotent of class at most~$2$.
 \end{itemize}
\end{theorem}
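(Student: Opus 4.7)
The plan is to close a chain of implications by combining several earlier results rather than reproving anything from scratch. I would first dispose of the equivalences (i)$\Leftrightarrow$(iii)$\Leftrightarrow$(iv) using general machinery. Since every solution associated to a skew left brace satisfies the hypotheses \eqref{prop:star:1} and \eqref{prop:star:2} (as noted after Theorem~\ref{Th:GV}), Proposition~\ref{prop:star} yields (i)$\Leftrightarrow$(iii). Then Corollary~\ref{cor:RetSoc}(ii) identifies multipermutation level with nilpotency class, so (iii)$\Leftrightarrow$(iv) is immediate.

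Next I would establish (i)$\Leftrightarrow$(ii) by invoking Proposition~\ref{prop:4ekv2red}: each of the four defining identities of $2$-reductivity translates into a homomorphism or anti-homomorphism property of $\lambda$ or $\rho$ from $(B,\cdot)$ into $\Aut(B,\cdot)$ or $S_B$. Combining items (i) and (iii) of that proposition with the identity $\lambda_{a\circ b}=\lambda_a\lambda_b$ (Proposition~\ref{prop:lambda_hom}), one sees that $2$-reductivity forces $\lambda_a\lambda_b=\lambda_b\lambda_a$ and $\lambda_{a\cdot b}=\lambda_{b\cdot a}=\lambda_{a\circ b}$; the parallel argument using items (ii) and (iv) together with Proposition~\ref{prop:tau_hom} gives the analogous chain for $\rho$. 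Conversely, condition (ii) supplies more than enough equalities to recover each of \eqref{eq:red1}--\eqref{eq:red4}.

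Finally, for (iv)$\Leftrightarrow$(v), the key observation is that $2$-reductivity is self-dual under inversion of the solution. By Theorem~\ref{thm:Koch} the solution associated to $(B,\cdot_{op},\circ)$ is precisely $(B,\hat\lambda,\hat\rho)$; and by the combinatorial classification of $2$-reductive solutions as disjoint unions of abelian groups (Theorem~\ref{th:2red}) together with Proposition~\ref{prop:inv_2red}, the solution $(B,\lambda,\rho)$ is $2$-reductive if and only if $(B,\hat\lambda,\hat\rho)$ is. Applying the already-established (i)$\Leftrightarrow$(iv) to both $(B,\cdot,\circ)$ and its opposite then closes the loop.

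The main obstacle I anticipate is the bookkeeping in (i)$\Leftrightarrow$(ii): the four identities defining $2$-reductivity look independent, yet $2$-reductivity in fact forces both the homomorphism and the anti-homomorphism properties of $\lambda$ simultaneously (and similarly for $\rho$), which in turn forces commutativity of the translations and a collapse of $\cdot$ and $\circ$ as far as the actions $\lambda$ and $\rho$ can see. Ensuring this collapse is extracted cleanly from Proposition~\ref{prop:4ekv2red}, without accidentally invoking stronger hypotheses, is the delicate part of the argument.
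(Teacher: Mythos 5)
Your proposal is correct and follows essentially the same route as the paper: (i)$\Leftrightarrow$(iii) via Proposition~\ref{prop:star} (using that $\lambda_1=\rho_1=\id$), (iii)$\Leftrightarrow$(iv) via Corollary~\ref{cor:RetSoc}, (i)$\Leftrightarrow$(ii) via Proposition~\ref{prop:4ekv2red} combined with Propositions~\ref{prop:lambda_hom} and~\ref{prop:tau_hom}, and (iv)$\Leftrightarrow$(v) via Theorem~\ref{thm:Koch} and Proposition~\ref{prop:inv_2red}. The only difference is that you spell out the bookkeeping for (i)$\Leftrightarrow$(ii) in more detail than the paper does, which is fine.
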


\begin{proof}
(i)$\Leftrightarrow$(ii) follows from Proposition~\ref{prop:4ekv2red}, 
(iii)$\Leftrightarrow$(iv) follows from Corollary~\ref{cor:RetSoc}, 
(iii)$\Leftrightarrow$(i): We use Proposition~\ref{prop:star} since $\lambda_1(x)=x=\rho_1(x)$.

(iv)$\Leftrightarrow$(v): 
Condition (v) is now equivalent to $(B,\hat\lambda,\hat\rho)$ being $2$-reductive, according to Theorem~\ref{thm:Koch}.
But this is equivalent to $(B,\lambda,\rho)$ being $2$-reductive,
according to Proposition~\ref{prop:inv_2red}.
\end{proof}

\begin{corollary}
 Let $(B,\lambda,\rho)$ be a $2$-reductive solution associated to a skew left brace $(B,\cdot,\circ)$.
 Then $(B,\cdot)$ is nilpotent of class at most~$2$.
\end{corollary}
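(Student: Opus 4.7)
The plan is to bootstrap the nilpotency of the skew left brace $(B,\cdot,\circ)$, which we already know by Theorem~\ref{thm:2-red_brace}, into nilpotency of the single group $(B,\cdot)$. The crucial bridge is that the socle of any skew left brace sits inside the center of its multiplicative group, so nilpotency data about the socle series transfers to nilpotency data about $(B,\cdot)$.

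First, I would invoke Theorem~\ref{thm:2-red_brace} to conclude that $(B,\cdot,\circ)$ is nilpotent of class at most~$2$ as a skew left brace. By the definition of the socle series, this means $B_2 = \{1\}$, where $B_0 = B$, $B_1 = B/\Soc(B)$, and $B_2 = B_1/\Soc(B_1)$. Equivalently, $B_1 = \Soc(B_1)$, i.e.\ every element of the quotient skew left brace $B/\Soc(B)$ lies in its own socle.

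Next, I would use the definition $\Soc(B') = \Ker\lambda' \cap Z(B',\cdot)$ for any skew left brace $B'$, which in particular yields the inclusion $\Soc(B') \subseteq Z(B',\cdot)$. Applied to $B_1 = B/\Soc(B)$, the equality $B_1 = \Soc(B_1)$ forces $(B_1,\cdot)$ to be abelian; that is, $(B/\Soc(B),\cdot)$ is abelian, so $[B,B]_\cdot \subseteq \Soc(B)$.

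Finally, applying the same inclusion $\Soc(B) \subseteq Z(B,\cdot)$ to the original skew left brace gives
\[
 [B,B]_\cdot \subseteq \Soc(B) \subseteq Z(B,\cdot),
\]
which is exactly the statement that $(B,\cdot)$ is nilpotent of class at most~$2$. There is no substantial obstacle here: the argument is essentially a two-step unwinding of the socle-series definition, and its only ingredients are Theorem~\ref{thm:2-red_brace} together with the elementary containment $\Soc \subseteq Z(B,\cdot)$. The result refines Theorem~\ref{th:nil} in the $2$-reductive situation by providing an explicit bound on the nilpotency class.
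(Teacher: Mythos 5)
Your proposal is correct and follows essentially the same route as the paper: the paper's proof likewise combines Theorem~\ref{thm:2-red_brace} (to get that $(B,\cdot)/\Soc(B)$ is abelian) with the containment $\Soc(B)\leq Z(B,\cdot)$, which is exactly your chain $[B,B]_\cdot\subseteq\Soc(B)\subseteq Z(B,\cdot)$. Your write-up merely unwinds the socle-series definition a bit more explicitly, which is fine.
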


\begin{proof}
 We have $\Soc(B)\leq Z(B,\cdot)$. And, according to Theorem~\ref{thm:2-red_brace}, $(B,\cdot)/\Soc(B)$ is  abelian.
\end{proof}

In Section~\ref{sec:2red} we were constructing $2$-reductive solutions as the disjoint unions of abelian groups. There is hence a natural question whether these
abelian groups have some structural meaning in the skew left brace. The following example shows that it is not really so.

\begin{example}(See also \cite[Example 5.6]{BNY})
Let $(\Z_{2n},+_{2n},\cdot)$ be the skew left brace dual to the Example~\ref{exm:Z2n}, that means $a\cdot b=a+(-1)^ab$, for $a,b\in \Z_{2n}$. Then $\lambda_a(b)=\rho_a(b)=(-1)^ab$.
Since $[\Z_{2n}:\Soc(\Z_{2n})]=2$, the associated solution is 
$2$-reductive.
It is isomorphic to the union of
\[(\Z_1,\underbrace{\Z_2,\ldots,\Z_2}_{\frac{n-1}2\times},\Z_1,\underbrace{\Z_2,\ldots,\Z_2}_{\frac{n-1}2\times},\quad \scriptsize
\left(\begin{array}{cccc}
 0 & 0 & \cdots & 0\\
 &&\vdots\\
 0 & 0 & \cdots & 0\\
 1 & 1 & \cdots & 1\\
 &&\vdots\\
 1 & 1 & \cdots & 1
\end{array}\right)
,\quad
\left(\begin{array}{cccc}
 0 & 0 & \cdots & 0\\
 &&\vdots\\
 0 & 0 & \cdots & 0\\
 1 & 1 & \cdots & 1\\
 &&\vdots\\
 1 & 1 & \cdots & 1
\end{array}\right)).
\]
\end{example}

\vskip 3mm
\noindent
Recall that, for a solution $(X,\sigma, \tau)$, we can associate two groups:
\[G(X,r)=\langle X \mid x\circ y=\sigma_x(y)\circ \tau_y(x)\rangle
 \qquad\text{ and }\qquad
 A(X,r)=\langle X \mid x\cdot y=y\cdot\sigma_y\hat\sigma_y(x)\rangle.
\]
These two groups, when put across each other, form a skew left brace. 
A major part of the proof of the following theorem is almost a copy-paste of the proof of~\cite[Theorem 5.4]{ST}. It also directly follows from \cite[Theorem 3.13]{CT} just published by  Castelli and Trappeniers.

\begin{theorem}
 Let $(X,\sigma,\tau)$ be a $2$-reductive solution. Then $G(X,r)$ is a nilpotent skew left brace of class~$2$.
\end{theorem}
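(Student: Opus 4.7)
The plan is to show that the associated solution $(G,\lambda,\rho)$ of the skew left brace $(G,\cdot,\circ):=G(X,r)$ is itself $2$-reductive; the desired nilpotency then follows from the equivalence (i)$\Leftrightarrow$(iv) of Theorem~\ref{thm:2-red_brace}. Write $X$ also for its image in $G$; the facts I shall use are that $X$ generates both $(G,\cdot)$ and $(G,\circ)$, that $(G,\lambda,\rho)$ restricts to $(X,\sigma,\tau)$ on $X$ so that $\lambda_x(y),\rho_y(x)\in X$ for $x,y\in X$, and that each $\lambda_c$ is an automorphism of $(G,\cdot)$.

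First I would check the four identities \eqref{eq:red1}--\eqref{eq:red4} of $(G,\lambda,\rho)$ on pairs from $X\times X$. For instance, $\lambda_{\sigma_x(y)}$ and $\lambda_y$ are two automorphisms of $(G,\cdot)$ that, by $2$-reductivity of $(X,\sigma,\tau)$, agree on the generating set $X$, so they are equal; \eqref{eq:red3} is identical. From these, Theorem~\ref{thm:dislbilred} and Lemma~\ref{lm:tau_right} give that each $\rho_c$ is an anti-automorphism of $(G,\cdot)$, and the same agreement-on-generators trick extends \eqref{eq:red2} and \eqref{eq:red4} from $X\times X$ at least to $X\times G$. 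Applying \eqref{birack:1} to $x,y\in X$ together with the $X$-level identities yields $\lambda_x\lambda_y=\lambda_y\lambda_x$ on $X$, and since $\lambda$ is a homomorphism $(G,\circ)\to\Aut(G,\cdot)$ with $X$ generating $(G,\circ)$, the entire group $\LMlt(G)$ is abelian; symmetrically $\RMlt(G)$ is abelian.

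The second step extends the four identities from $X\times X$ to $G\times G$. I would prove by simultaneous induction on the $\circ$-length of $b$ that for every $x\in X$ one has both (a) $\lambda_x(b_1\circ b_2)=\lambda_x(b_1)\circ\lambda_x(b_2)$ whenever $b_1,b_2$ are strictly shorter, and (b) $\lambda_{\lambda_x(b)}=\lambda_b$. The inductive step for~(a) reduces to $\lambda_x\lambda_{y_1}=\lambda_{\lambda_x(y_1)}\lambda_x$ for $y_1\in X$, which is the combination of $\lambda_{\lambda_x(y_1)}=\lambda_{y_1}$ (i.e.\ \eqref{eq:red1} on $X$) with the abelianness of $\LMlt(G)$; given (a), the inductive step for (b) uses that $\lambda$ is a $(G,\circ)$-homomorphism, so that $\lambda_{\lambda_x(b)}=\lambda_{\lambda_x(y_1)}\cdots\lambda_{\lambda_x(y_m)}=\lambda_{y_1}\cdots\lambda_{y_m}=\lambda_b$. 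Iterating (b) along a $\circ$-word for $a\in G$ finally yields $\lambda_{\lambda_a(b)}=\lambda_b$ for all $a,b\in G$. The identities \eqref{eq:red2}, \eqref{eq:red3} and \eqref{eq:red4} follow by a symmetric argument with $\rho$ playing the role of $\lambda$.

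The principal obstacle is the second step, since $\lambda_a(b)$ for general $a,b\in G$ need not lie in $X$, and so the agreement-on-generators trick of the first step is not directly available. The resolution is the observation in (a) above that, for $x\in X$, $\lambda_x$ is not merely a $(G,\cdot)$-automorphism but also a $(G,\circ)$-homomorphism; this brings $\lambda_x(b)$ back into the realm of $\circ$-words in $X$ and lets one apply the $X$-level identities inductively. This is the non-involutive analogue of the strategy used by Stefanello and Trappeniers in~\cite[Theorem~5.4]{ST}.
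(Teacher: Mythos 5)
Your $\lambda$-side can be made to work along the lines you sketch: establishing \eqref{eq:red1} and \eqref{eq:red3} on $X\times X$ as equalities of automorphisms of $(G,\cdot)$, deducing that $\LMlt(G)$ is abelian, and then extending \eqref{eq:red1} to all of $G$ by an induction that simultaneously shows each $\lambda_x$, $x\in X$, is a $\circ$-endomorphism. This is a genuinely different induction from the paper's (the paper extends the \emph{first} index, proving $\lambda_{\lambda_a(y)}=\lambda_y$ for $a\in G$, $y\in X$ by a one-line induction on $\circ$-words, and then inducts on $\cdot$-words to get $\lambda_{a\cdot b}=\lambda_a\lambda_b$), but both routes are viable; note only that your base case needs separate care with inverses (e.g. $\lambda_x(\bar y)=\overline{\lambda_x(y)}$ is not automatic before the homomorphism property is available — compare the paper's explicit computation of $\lambda_{x^{-1}}=\lambda_x^{-1}$). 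Given \eqref{eq:red1} in full and abelianness of $\LMlt(G)$, \eqref{eq:red3} in full does follow via Theorem~\ref{thm:dislbilred}.

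The genuine gap is on the $\rho$-side. You invoke Theorem~\ref{thm:dislbilred} and Lemma~\ref{lm:tau_right} to conclude that each $\rho_c$ is an (anti-)automorphism of $(G,\cdot)$, but Lemma~\ref{lm:tau_right} assumes that $(G,\lambda,\rho)$ is \emph{right} distributive, which by Theorem~\ref{thm:disrbilred} is exactly identity \eqref{eq:red4} for all of $G$ — one of the identities you are in the middle of proving; nothing established up to that point (the $X\times X$ cases of \eqref{eq:red1}, \eqref{eq:red3}, or even their full versions) says anything about multiplicativity of $\rho_c$, so the argument is circular. Consequently the agreement-on-generators trick is unavailable for $\rho$ (a priori $\rho_c$ is a mere bijection, not determined by its values on $X$), so even the $X\times X$ cases of \eqref{eq:red2} and \eqref{eq:red4}, as equalities of permutations of all of $G$, and hence your claim that $\RMlt(G)$ is abelian, remain unproven. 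Nor is the concluding ``symmetric argument with $\rho$ playing the role of $\lambda$'' actually symmetric: the skew-brace axiom is left-sided, $\rho_x$ is not known to respect either $\cdot$ or $\circ$, and there is no analogue of $a\circ b=a\cdot\lambda_a(b)$ for $\rho$, so neither your induction nor the generator argument transfers. The missing idea — and it is the paper's key additional step — is to bypass $\rho$ entirely: $(X,\hat\sigma,\hat\tau)$ is again $2$-reductive (Proposition~\ref{prop:inv_2red}) and its structure skew brace is the opposite brace $(G,\cdot_{op},\circ)$ (Theorem~\ref{thm:Koch}); running the $\lambda$-argument there gives the bi-skew property of $(G,\cdot_{op},\circ)$, hence right distributivity of $(G,\lambda,\rho)$ and $\rho_a=\hat\lambda_a^{-1}$ (Theorem~\ref{thm:disrbilred}), after which the $\rho$-identities follow from the $\hat\lambda$-ones. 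As written, the $\rho$-half of your proof does not go through.
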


\begin{proof}
Let $(G(X,r),\lambda,\rho)$ be the solution  associated with the skew left brace $G(X,r)$. According to Theorem~\ref{thm:2-red_brace}, we need to prove $\lambda_{a\cdot b}=\lambda_{b\cdot a}=\lambda_a\lambda_b$
 and $\rho_{a\cdot b}=\rho_{b\cdot a}=\rho_a\rho_b$, for all~$a,b\in G(X,r)$. The part $\lambda_{b\cdot a}=\lambda_a\lambda_b$ follows from~\cite[Corollary 5.5]{ST}.
 
 Let us focus on $\lambda$ being a homomorphism.
 From~\eqref{eq:red1} we have 
 $\sigma_{\sigma_x(y)}=\sigma_y$, for all $x,y\in X$, and therefore
 $\lambda_{\lambda_x(y)}=\lambda_y$, for all $x,y\in X$. Since $X$ generates $G(X,r)$, we inductively obtain $\lambda_{\lambda_a(y)}=\lambda_y$, for all $a\in G(X,r)$ and $y\in X$.
 
 Let now $w=x_1^{e_1}x_2^{e_2}\cdot \cdots\cdot x_k^{e_k}$, where $x_i\in X$ and $e_i=\pm 1$, for $1\leq i\leq k$. We shall prove, by an induction on~$k$, that
 \[\lambda_w=\lambda_{x_1}^{e_1}\lambda_{x_2}^{e_2}\cdots \lambda_{x_k}^{e_k}.\]
 For $k=1$ and $e_1=1$ the claim is trivial. For $e_1=-1$ we remark
 \[\lambda_{a}(\bar a)=a^{-1} \quad\Rightarrow\quad
  a=\lambda_{a}(\bar a^{-1})
  \quad\Rightarrow\quad
  \bar a=\lambda_{\bar a}(a^{-1})
  \quad\Rightarrow\quad
  a=\overline{\lambda_{\bar a}(a^{-1})},
 \]
 hence
 \[
  \lambda_{x^{-1}}=\lambda_{\overline{\lambda_{\overline{x^{-1}}}(x)}}
  =\lambda_{\lambda_{\overline{x^{-1}}}(x)}^{-1}
  =\lambda_{x}^{-1}.
 \]
 Now we assume that the induction hypothesis is valid for $k-1$ and we compute, for
 $w=x_1^{e_1}x_2^{e_2}\cdot \cdots\cdot x_k^{e_k}$ and $v=x_1^{e_1}x_2^{e_2}\cdot \cdots\cdot x_{k-1}^{e_{k-1}}$
 \[
  \lambda_w=\lambda_{v\cdot x_k^{e_k}}
  =\lambda_{v \cdot \lambda_v\lambda_v^{-1}(x_k^{e_k})}
   =\lambda_{v \circ \lambda_v^{-1}(x_k^{e_k})}
   =\lambda_{v} \lambda_{\lambda_{\bar v}(x_k)^{e_k}}
   =\lambda_{v} \lambda_{\lambda_{\bar v}(x_k)}^{e_k}=
   \lambda_{x_1}^{e_1}\cdots\lambda_{x_{k-1}}^{e_{k-1}}\lambda_{x_k}^{e_k}
 \]
and $\lambda$ is a homomorphism with respect to~$\cdot$.

Now $(X,\hat\sigma,\hat\tau)$ is $2$-reductive as well, according to Proposition~\ref{prop:inv_2red}. Hence, for the skew left brace $G(X,\hat r)$, we have $\hat\lambda_{a\cdot_{op} b}=\hat\lambda_{b\cdot_{op} a}=\hat\lambda_a\hat\lambda_b$, for all~$a,b\in G(X,\hat r)$. According to Theorem~\ref{thm:disrbilred}, we have $\rho_a^{-1}=\hat\lambda_a$ and therefore we obtain $\rho_{a\cdot b}=\hat\lambda_{b\cdot_{op} a}^{-1}=\hat\lambda_b^{-1}\hat\lambda_a^{-1}=\rho_b\rho_a$ and analogously $\rho_{b\cdot a}=\rho_b\rho_a$.
\end{proof}

\end{document}